\newtheorem{theorem}{\bf Theorem}[section]
\newtheorem{lemma}[theorem]{\bf Lemma}
\newtheorem{proposition}[theorem]{\bf Proposition}
\newtheorem{assumption}[theorem]{\bf Assumption}
\newtheorem{remark}[theorem]{\bf Remark}
\newtheorem{definition}[theorem]{\bf Definition}
\renewcommand{\thefootnote}{\fnsymbol{footnote}}
\renewenvironment{proof}[1][\proofname]{\par
  \pushQED{\qed}%
  \normalfont \topsep6\p@\@plus6\p@\relax
  \trivlist
  \item\relax
  {
  #1\@addpunct{.}}\hspace\labelsep\ignorespaces
}{%
  \popQED\endtrivlist\@endpefalse
}
\numberwithin{equation}{section}
\newcommand{\dt}{\partial_t}
\newcommand{\ds}{\partial_s}
\newcommand{\opnorm}{\@ifstar\@opnorms\@opnorm}
\newcommand{\@opnorms}[1]{%
  \left|\mkern-1.5mu\left|\mkern-1.5mu\left|
   #1
  \right|\mkern-1.5mu\right|\mkern-1.5mu\right|
}
\newcommand{\@opnorm}[2][]{%
  \mathopen{#1|\mkern-1.5mu#1|\mkern-1.5mu#1|}
  #2
  \mathclose{#1|\mkern-1.5mu#1|\mkern-1.5mu#1|}
}
\begin{document}
\vspace*{0ex}
\begin{center}
{\Large\bf
Well-posedness of the initial boundary value problem \\
for degenerate hyperbolic systems with a localized term \\
and its application to the linearized system \\[0.5ex]
for the motion of an inextensible hanging string
}
\end{center}

\begin{center}
Tatsuo Iguchi and Masahiro Takayama\footnote{Corresponding author}
\end{center}

\renewcommand{\thefootnote}{\fnsymbol{footnote}}
\footnote[0]{2020 Mathematics Subject Classification. Primary: 35L53, Secondary: 35L80, 74K05.}
\renewcommand{\thefootnote}{\arabic{footnote}}

\begin{abstract}
Motivated by an analysis on the well-posedness of the initial boundary value problem for the motion of an inextensible hanging string, 
we first consider an initial boundary value problem for one-dimensional degenerate hyperbolic systems with a localized term 
and show its well-posedness in weighted Sobolev spaces. 
We then consider the linearized system for the motion of an inextensible hanging string. 
Well-posedness of its initial boundary value problem is demonstrated as an application of the result obtained in the first part. 
\end{abstract}

\section{Introduction}\label{sect:intro}
The present paper consists of two parts. 
In the first part, motivated by an analysis on the well-posedness of the initial boundary value problem for the motion of an inextensible hanging string, 
we consider the initial boundary value problem 
\begin{equation}\label{LS}
\begin{cases}
 \ddot{\bm{u}}=(A(s,t)\bm{u}')'+Q(s,t)\bm{u}'(1,t)+\bm{f}(s,t) &\mbox{in}\quad (0,1)\times(0,T), \\
 \bm{u}=\bm{0} &\mbox{on}\quad \{s=1\}\times(0,T), \\
 (\bm{u},\dot{\bm{u}})|_{t=0}=(\bm{u}_0^\mathrm{in},\bm{u}_1^\mathrm{in}) &\mbox{in}\quad (0,1),
\end{cases}
\end{equation}
where $\bm{u}$ is a $\mathbb{R}^N$-valued unknown function of $(s,t)\in[0,1]\times[0,T]$, while $\bm{f}$, $\bm{u}_0^\mathrm{in}$, 
and $\bm{u}_1^\mathrm{in}$ are $\mathbb{R}^N$-valued given functions, and $A$ and $Q$ are $N\times N$ matrix valued given functions. 
Here, $\dot{\bm{u}}$ and $\bm{u}'$ denote derivatives of $\bm{u}$ with respect to $t$ and $s$, respectively. 
Moreover, we assume that $A(s,t)$ is symmetric and satisfies $A(s,t)\simeq s\mathrm{Id}$, where $\mathrm{Id}$ is an identity matrix. 
Therefore, the coefficient matrix $A(s,t)$ degenerates at one end $s=0$ of the interval, 
so that the first equation in \eqref{LS} is a linear degenerate hyperbolic system with a localized term. 
Due to this degeneracy, we do not need to impose any boundary conditions on this end $s=0$. 
The first objective in this paper is to establish the well-posedness of this initial boundary value problem.

\medskip
One of difficulties of this problem comes from the degeneracy of the coefficient matrix $A(s,t)$. 
This type of degenerate hyperbolic systems in the analysis of the motion of strings has already been analyzed by several authors, for example, 
Koshlyakov, Gliner, and Smilnov \cite{KoshlyakovGlinerSmilnov1962}, Reeken \cite{Reeken1979-1, Reeken1979-2}, Yamaguchi \cite{Yamaguchi2008}, 
Preston \cite{Preston2011}, and Takayama \cite{Takayama2018}, 
and the difficulty has been overcame by using appropriate weighted Sobolev spaces. 
In the present paper, we adopt the weights used by Reeken \cite{Reeken1979-1, Reeken1979-2} and Takayama \cite{Takayama2018}. 
We note that weighted Sobolev spaces have been efficiently used also in the analysis of degenerate hyperbolic systems appearing in the fluid mechanics, for example, 
in the analysis of the nonlinear shallow water and Green--Naghdi equations by Lannes and M\'{e}tivier \cite{LannesMetivier2018} for the motion of water surface 
near the shoreline where the depth of the water vanishes, and in the analysis of the Euler--Poisson equations by Makino \cite{Makino2015} for the motion of 
a gaseous star surrounded by a free surface where the density and the pressure of the gas vanish. 
Another difficulty of the problem comes from the localized term $Q(s,t)\bm{u}'(1,t)$, which cannot be regarded as a lower order term. 
However, as we will see in this paper, by introducing an appropriate energy functional we obtain an a priori energy estimate for the solution $\bm{u}$. 
Although such an energy estimate is crucial to show the well-posedness of the problem, it does not imply directly the existence of the solution. 
Our idea showing the existence of the solution is to regularize the hyperbolic system as 
\begin{equation}\label{rLS}
\begin{cases}
 \ddot{\bm{u}}=(A(s,t)\bm{u}')'+Q(s,t)\bm{u}'(1,t)+\varepsilon s\dot{\bm{u}}'+\bm{f}(s,t) &\mbox{in}\quad (0,1)\times(0,T), \\
 \bm{u}=\bm{0} &\mbox{on}\quad \{s=1\}\times(0,T), \\
 (\bm{u},\dot{\bm{u}})|_{t=0}=(\bm{u}_0^{\mathrm{in},\varepsilon},\bm{u}_1^{\mathrm{in},\varepsilon}) &\mbox{in}\quad (0,1),
\end{cases}
\end{equation}
where $\varepsilon>0$ is a regularizing parameter and the initial data $(\bm{u}_0^{\mathrm{in},\varepsilon},\bm{u}_1^{\mathrm{in},\varepsilon})$ should be 
modified from the original initial data $(\bm{u}_0^{\mathrm{in}},\bm{u}_1^{\mathrm{in}})$ so that the corresponding compatibility conditions are satisfied. 
Thanks to the regularized term $\varepsilon s\dot{\bm{u}}'$, the solution to this regularized problem has an additional boundary regularity 
so that the localized term $Q(s,t)\bm{u}'(1,t)$ can be regarded as a lower order term. 
As a result, to show the existence of the solution $\bm{u}^\varepsilon$ for $\varepsilon>0$ it is sufficient to consider the case $Q(s,t)=O$. 
In such a case, we can follow the idea used by Takayama \cite{Takayama2018}, that is, 
we transform the problem on the one-dimensional interval $(0,1)$ into a problem on the two-dimensional unit disc $D$ 
by the transformation $\bm{u}^\sharp(x_1,x_2,t)=\bm{u}(x_1^2+x_2^2,t)$. 
Then, the transformed two-dimensional problem forms a non-degenerate hyperbolic system so that the standard theory of hyperbolic systems can be applicable 
to show the existence of the solution $\bm{u}^\varepsilon$ to the regularized problem for $\varepsilon>0$. 
Then, passing to the limit $\varepsilon\to+0$ we obtain the solution $\bm{u}$ to the problem \eqref{LS}. 
To the best of our knowledge, there is no existing result on initial boundary value problems for hyperbolic systems with this type of a localized term. 
Instead, we mention Fukuda and Suzuki \cite{FukudaSuzuki2005} and Okada and Fukuda \cite{OkadaFukuda2003}, 
where an initial boundary value problem for a semilinear parabolic equation with a localized term has been studied.

\medskip
The problem \eqref{LS} arises in the analysis on the well-posedness of the initial boundary value problem 
for the motion of an inextensible hanging string of finite length under the action of the gravity. 
The model of the motion consists of the initial boundary value problem 
\begin{equation}\label{HS}
\begin{cases}
 \ddot{\bm{x}} - (\tau\bm{x}')'=\bm{g} &\mbox{in}\quad (0,1)\times(0,T), \\
 \bm{x}=\bm{0} &\mbox{on}\quad \{s=1\}\times(0,T), \\
 (\bm{x},\dot{\bm{x}})|_{t=0}=(\bm{x}_0^\mathrm{in},\bm{x}_1^\mathrm{in}) &\mbox{in}\quad (0,1)
\end{cases}
\end{equation}
for the position vector $\bm{x}$ of the string coupled with the two-point boundary value problem 
\begin{equation}\label{BVP}
\begin{cases}
 -\tau''+|\bm{x}''|^2\tau = |\dot{\bm{x}}'|^2 &\mbox{in}\quad (0,1)\times(0,T), \\
 \tau=0 &\mbox{on}\quad \{s=0\}\times(0,T), \\
 \tau'=-\bm{g}\cdot\bm{x}' &\mbox{on}\quad \{s=1\}\times(0,T)
\end{cases}
\end{equation}
for the tension $\tau$ of the string, where $\bm{g}$ is the acceleration of gravity vector assumed to be constant. 
For more details on this model, we refer to Iguchi and Takayama \cite{IguchiTakayama2023}, 
where a priori estimates for the solution $(\bm{x},\tau)$ were obtained in weighted Sobolev spaces. 
In the second part of this paper, we consider a linearized system of this problem. 
Let us linearize the problem around $(\bm{x},\tau)$ and denote the variations by $(\bm{y},\nu)$. 
Then, the linearized system has the form 
\begin{equation}\label{LEq}
\begin{cases}
 \ddot{\bm{y}}=(\tau\bm{y}')'+(\nu\bm{x}')'+\bm{f} &\mbox{in}\quad (0,1)\times(0,T), \\
 \bm{y}=\bm{0} &\mbox{on}\quad \{s=1\}\times(0,T), \\
 (\bm{y},\dot{\bm{y}})|_{t=0}=(\bm{y}_0^\mathrm{in},\bm{y}_1^\mathrm{in}) &\mbox{in}\quad (0,1),
\end{cases}
\end{equation}
and 
\begin{equation}\label{LBVP}
\begin{cases}
 -\nu''+|\bm{x}''|^2\nu = 2\dot{\bm{x}}'\cdot\dot{\bm{y}}' - 2(\bm{x}''\cdot\bm{y}'')\tau + h &\mbox{in}\quad (0,1)\times(0,T), \\
 \nu = 0 &\mbox{on}\quad \{s=0\}\times(0,T), \\
 \nu' = -\bm{g}\cdot\bm{y}' &\mbox{on}\quad \{s=1\}\times(0,T),
\end{cases}
\end{equation}
where $\bm{f}$ and $h$ can be regarded as given functions. 
Here, we note that under appropriate assumptions on $(\bm{x},\tau)$, once $\bm{y}$ is given, 
the above two-point boundary value problem for $\nu$ can be solved uniquely.

\medskip
The second objective in this paper is to establish the well-posedness of the problem \eqref{LEq} and \eqref{LBVP} in weighted Sobolev spaces 
by applying the result in the first part of this paper on the well-posedness of the problem \eqref{LS}. 
To this end, we need to figure out the principal term of $\nu$ in terms of ${\bm{y}}$ explicitly because the term $(\nu\bm{x}')'$ in \eqref{LEq} 
cannot be regarded as a lower order term. 
As we will see later, we decompose $\nu$ as a sum of a principal part $\nu_\mathrm{p}$ and a lower order part $\nu_\mathrm{l}$. 
Moreover, the principal part can be written explicitly as 
\begin{equation}\label{pnu}
\nu_\mathrm{p}(s,t) = -((\bm{g}+2\tau\bm{x}'')(1,t)\cdot\bm{y}'(1,t))\phi(s,t),
\end{equation}
where $\phi$ is a unique solution to the two-point boundary value problem 
\begin{equation}\label{defphi}
\begin{cases}
 -\phi''+|\bm{x}''|^2\phi = 0 &\mbox{in}\quad (0,1)\times(0,T), \\
 \phi=0 &\mbox{on}\quad \{s=0\}\times(0,T), \\
 \phi'=1 &\mbox{on}\quad \{s=1\}\times(0,T).
\end{cases}
\end{equation}
Plugging the decomposition $\nu=\nu_\mathrm{p}+\nu_\mathrm{l}$ into \eqref{LEq}, we obtain 
\begin{equation}\label{rLEq}
\begin{cases}
 \ddot{\bm{y}}=(A\bm{y}')'+Q\bm{y}'(1,t)+(\nu_\mathrm{l}\bm{x}')'+\bm{f} &\mbox{in}\quad (0,1)\times(0,T), \\
 \bm{y}=\bm{0} &\mbox{on}\quad \{s=1\}\times(0,T), \\
 (\bm{y},\dot{\bm{y}})|_{t=0}=(\bm{y}_0^\mathrm{in},\bm{y}_1^\mathrm{in}) &\mbox{in}\quad (0,1),
\end{cases}
\end{equation}
where $A(s,t)=\tau(s,t)\mathrm{Id}$ and $Q(s,t)=-(\phi\bm{x}')'(s,t)\otimes(\bm{g}+2\tau\bm{x}'')(1,t)$. 
This problem has the same form as \eqref{LS} so that we can apply the result of the first part. 
However, in order to guarantee that the term $(\nu_\mathrm{l}\bm{x}')'$ is in fact of lower order, 
we need a detailed analysis on a two-point boundary value problem for $\nu_\mathrm{l}$.

\medskip
The contents of this paper are as follows. 
In Section \ref{sect:results} we begin with introducing weighted Sobolev spaces $X^m$ and $Y^m$ for non-negative integers $m$. 
These spaces play an important role in the problems. 
We then state our main results in this paper: well-posedness of the problem \eqref{HS} in Theorem \ref{Th1} 
and that of the problem \eqref{LEq} and \eqref{LBVP} in Theorem \ref{Th2}. 
In Section \ref{sect:pre} we present basic properties of the weighted Sobolev spaces and related calculus inequalities. 
We consider the initial boundary value problem \eqref{HS} in Sections \ref{sect:EstIV1}--\ref{sect:Proof1}, which are the first part of this paper. 
In Section \ref{sect:EstIV1} we evaluate initial values for time derivatives of $\bm{u}$ in terms of the initial data 
$(\bm{u}_0^\mathrm{in},\bm{u}_1^\mathrm{in})$ and the forcing term $\bm{f}$ and state precisely the compatibility conditions on the data. 
In Section \ref{sect:EE} we derive a basic energy estimate in Proposition \ref{prop:BEE} and a higher order energy estimate in Proposition \ref{prop:HOEE} 
for the solution to the regularized problem \eqref{rLS} including the case $\varepsilon=0$. 
In Section \ref{sect:Proof1} we prove Theorem \ref{Th1}. 
To this end, we first show the well-posedness of the regularized problem in the case $Q(s,t)=O$ and $\varepsilon>0$ 
by transforming the problem on the interval $(0,1)$ into a problem for a non-degenerate hyperbolic system on the unit disc $D$. 
We also derive an additional boundary regularity of the solution. 
We then show the well-posedness of the problem with a non-zero localized term $Q(s,t)\bm{u}'(1,t)$ in the case $\varepsilon>0$ by the standard Picard iteration. 
Thanks to the energy estimate obtained in Section \ref{sect:EE} we can pass to the limit $\varepsilon\to+0$ 
and obtain a solution $\bm{u}$ of the original problem \eqref{HS}. 
We then consider the initial boundary value problem \eqref{LEq} and \eqref{LBVP} in Sections \ref{sect:TBVP}--\ref{sect:Proof2}, 
which are the second part of this paper. 
In Section \ref{sect:TBVP} we analyze two-point boundary value problems related to \eqref{LBVP} and \eqref{defphi}, especially, 
derive estimates for the solution $\phi$ of \eqref{defphi} and those for the lower order part $\nu_\mathrm{l}$ of $\nu$ in terms of time dependent norms. 
These estimates guarantee that the term $(\nu_\mathrm{l}\bm{x}')'$ in \eqref{rLEq} is of lower order. 
In Section \ref{sect:EstIV2} we evaluate initial values for time derivatives of $(\bm{y},\nu)$ in terms of the initial data 
$(\bm{y}_0^\mathrm{in},\bm{y}_1^\mathrm{in})$ and the forcing terms $(\bm{f},h)$ and state precisely the compatibility conditions on the data. 
In Section \ref{sect:Proof2} we prove Theorem \ref{Th2}. 
To show the existence of the solution, we use the method of successive approximation. 
In each steps, we apply Theorem \ref{Th1}.

\medskip
\noindent
{\bf Notation}. \ 
For $1\leq p\leq\infty$, we denote by $L^p$ the Lebesgue space on the open interval $(0,1)$. 
For non-negative integer $m$, we denote by $H^m$ the $L^2$ Sobolev space of order $m$ on $(0,1)$. 
The norm of a Banach space $B$ is denoted by $\|\cdot\|_B$. 
The inner product in $L^2$ is denoted by $(\cdot,\cdot)_{L^2}$. 
We put $\dt=\frac{\partial}{\partial t}$ and $\ds=\frac{\partial}{\partial s}$. 
The norm of a weighted $L^p$ space with a weight $s^\alpha$ is denoted by $\|s^\alpha u\|_{L^p}$, so that 
$\|s^\alpha u\|_{L^p}^p=\int_0^1s^{\alpha p}|u(s)|^p \mathrm{d}s$ for $1\leq p<\infty$. 
It is sometimes denoted by $\|\sigma^\alpha u\|_{L^p}$, too. 
This would cause no confusion. 
$[P,Q]=PQ-QP$ denotes the commutator. 
We denote by $C(a_1, a_2, \ldots)$ a positive constant depending on $a_1, a_2, \ldots$. 
$f\lesssim g$ means that there exists a non-essential positive constant C such that $f\leq Cg$ holds. 
$f\simeq g$ means that $f\lesssim g$ and $g\lesssim f$ hold. 
$a_1 \vee a_2 = \max\{a_1,a_2\}$.

\medskip
\noindent
{\bf Acknowledgement} \\
T. I. is partially supported by JSPS KAKENHI Grant Number JP22H01133.

\section{Main results}\label{sect:results}
In order to state our main results, we first introduce function spaces that we are going to use in this paper. 
For a non-negative integer $m$, following Reeken \cite{Reeken1979-1, Reeken1979-2}, Takayama \cite{Takayama2018}, and Iguchi and Takayama \cite{IguchiTakayama2023}, 
we define a weighted Sobolev space $X^m$ as a set of all function $u=u(s)\in L^2$ equipped with a norm $\|\cdot\|_{X^m}$ defined by 
\begin{equation}\label{WSS}
\|u\|_{X^m}^2 =
\begin{cases}
 \displaystyle
 \|u\|_{H^k}^2 + \sum_{j=1}^k\|s^j\ds^{k+j}u\|_{L^2}^2 &\mbox{for}\quad m=2k, \\
 \displaystyle
 \|u\|_{H^k}^2 + \sum_{j=1}^{k+1}\|s^{j-\frac12}\ds^{k+j}u\|_{L^2}^2 &\mbox{for}\quad m=2k+1.
\end{cases}
\end{equation}
For a function $u=u(s,t)$ depending also on time $t$ and for integers $m$ and $l$ satisfying $0\leq l\leq m$, 
we introduce a norm $\opnorm{\cdot}_{m,l}$ and the space $\mathscr{X}_T^{m,l}$ by 
\[
\opnorm{u(t)}_{m,l}^2 = \sum_{j=0}^l \|\dt^j u(t)\|_{X^{m-j}}^2, \qquad
\mathscr{X}_T^{m,l} = \bigcap_{j=0}^l C^j([0,T],X^{m-j}), 
\]
and put $\opnorm{\cdot}_m=\opnorm{\cdot}_{m,m}$, $\opnorm{\cdot}_{m,*}=\opnorm{\cdot}_{m,m-1}$, 
$\mathscr{X}_T^m = \mathscr{X}_T^{m,m}$, and $\mathscr{X}_T^{m,*} = \mathscr{X}_T^{m,m-1}$. 
We use a notational convention $\opnorm{\cdot}_{0,*}=0$.

For a non-negative integer $m$, we define another weighted Sobolev space $Y^m$ as the set of all function $u=u(s)$ defined in the open interval $(0,1)$ 
equipped with a norm $\|\cdot\|_{Y^m}$ defined by 
\[
\|u\|_{Y^m}^2 =
\begin{cases}
 \|s^\frac12 u\|_{L^2}^2 &\mbox{for}\quad m=0, \\
 \displaystyle
 \|u\|_{H^k}^2 + \sum_{j=1}^{k+1}\|s^j\ds^{k+j}u\|_{L^2}^2 &\mbox{for}\quad m=2k+1, \\
 \displaystyle
 \|u\|_{H^k}^2 + \sum_{j=1}^{k+2}\|s^{j-\frac12}\ds^{k+j}u\|_{L^2}^2 &\mbox{for}\quad m=2k+2.
\end{cases}
\]
This norm is introduced so that $\|u\|_{X^{m+1}}^2=\|u\|_{L^2}^2+\|u'\|_{Y^m}^2$ holds for $m=0,1,2,\ldots$. 
For a function $u=u(s,t)$ depending also on time $t$ and 
for a non-negative integer $m$, 
we introduce a norm 
$\opnorm{ \cdot }_{m}^\dag$ and the space $\mathscr{Y}_T^{m}$ by 
\[
\opnorm{ u(t) }_{m}^\dag = \sum_{j=0}^m \|\dt^j u(t)\|_{Y^{m-j}}, \qquad
\mathscr{Y}_T^{m} = \bigcap_{j=0}^m C^j([0,T];Y^{m-j}).
\]
We use a notational convention $\opnorm{\cdot}_{-1}^\dag=0$.

For a function $u=u(t)$ of time $t$, following Iguchi and Lannes \cite{IguchiLannes2021}, 
we use weighted norms with an exponential function $\mathrm{e}^{-\gamma t}$ for $\gamma>0$ defined by 
\[
|u|_{L_\gamma^p(0,t)} = \left( \int_0^t \mathrm{e}^{-p\gamma t'}|u(t')|^p{\rm d}t' \right)^\frac1p, \qquad
  |u|_{H_\gamma^m(0,t)} = \Biggl( \sum_{j=0}^m |\dt^ju|_{L_\gamma^2(0,t)}^2 \Biggr)^\frac12,
\]
and put 
\[
I_{\gamma,t}(u)=\sup_{0\leq t'\leq t}\mathrm{e}^{-\gamma t'}|u(t')| + \sqrt{\gamma}|u|_{L_\gamma^2(0,t)}.
\]
We denote by $S_{\gamma,t}^*(\cdot)$ its dual norm for the $L_\gamma^2(0,t)$ scalar product, that is, 
\begin{equation}\label{defSstar}
S_{\gamma,t}^*(u) = \sup_{\varphi} \biggl\{ \biggl| \int_0^t \mathrm{e}^{-2\gamma t'}u(t')\varphi(t'){\rm d}t' \biggr|
 \,;\, I_{\gamma,t}(\varphi) \leq 1 \biggr\}.
\end{equation}
From this definition, we get directly the following upper bounds 
\begin{equation}\label{propSstar}
S^*_{\gamma,t}(u)\leq |u|_{L^1_{\gamma}(0,t)}
\quad\mbox{ and }\quad
S^*_{\gamma,t}(u)\leq \frac{1}{\sqrt{\gamma}}|u|_{L^2_{\gamma}(0,t)} \leq \frac{1}{\gamma}I_{\gamma,t}(u).
\end{equation}

In order to state our result on the well-posedness of the problem \eqref{LS}, we need to impose precise assumptions on the coefficient matrices $A$ and $Q$.

\begin{assumption}\label{ass:BEE}
Let $M_0$ and $M_1$ be positive constants. 
For any $(s,t)\in(0,1)\times(0,T)$, $A(s,t)$ is symmetric and it holds that 
\[
\begin{cases}
 M_0^{-1}s\mathrm{Id} \leq A(s,t) \leq M_0s\mathrm{Id}, \\
 |A'(s,t)|+s^\frac12|Q(s,t)|+\|Q(t)\|_{L^2} \leq M_0, \\
 |\dt A'(s,t)|+\opnorm{Q(t)}_1 \leq M_1.
\end{cases}
\]
\end{assumption}

These assumptions guarantee a basic energy estimate for the solution of the problem \eqref{LS} 
and the following assumptions guarantee higher order energy estimates together with the existence of the solution.

\begin{assumption}\label{ass:HOEE}
Let $m\geq2$ be an integer, $T$, $M_0$, and $M_1$ be positive constants. 
 \setlength{\parskip}{-2mm}
\begin{enumerate}
 \setlength{\itemsep}{-0.5mm}
\item[{\rm (i)}]
$A' \in \mathscr{X}_T^{m-2}\cap\mathscr{X}_T^{2,*}$ and $Q\in\mathscr{X}_T^{m-2\vee1}$. 
\item[{\rm (ii)}]
$\dt^{m-1}A', \dt^{m-1\vee2}Q\in L^\infty(0,T;L^2)$ and $\dt^2A'\in L^\infty(0,T;X^1)$. 
\item[{\rm (iii)}]
In the case $m\geq3$, for any $t\in(0,T)$ it holds that 
\[
\begin{cases}
 \opnorm{A'(t)}_{m-2}+\opnorm{A'(t)}_{2,*}+\opnorm{Q(t)}_{m-2} \leq M_0, \\
 \|\dt^{m-1}A'(t)\|_{L^2}+\|\dt^{m-1}Q(t)\|_{L^2}+\|\dt^2A'(t)\|_{X^1} \leq M_1.
\end{cases}
\]
\end{enumerate}
\end{assumption}

The following theorem is one of main results in this paper and gives a well-posedness of the problem \eqref{LS} in the weighted Sobolev space $X^m$.

\begin{theorem}\label{Th1}
Let $m\geq2$ be an integer, $T>0$, and assume that Assumptions \ref{ass:BEE} and \ref{ass:HOEE} are satisfied with positive constants $M_0$ and $M_1$. 
Then, for any data $\bm{u}_0^\mathrm{in}\in X^m$, $\bm{u}_1^\mathrm{in}\in X^{m-1}$, and $\bm{f}\in\mathscr{X}_T^{m-2}$ satisfying 
$\dt^{m-1}\bm{f}\in L^1(0,T;L^2)$ and the compatibility conditions up to order $m-1$ in the sense of Definition \ref{def:CC1} below, 
there exists a unique solution $\bm{u}\in\mathscr{X}_T^m$ to the initial boundary value problem \eqref{LS}. 
Moreover, the solution satisfies the estimate 
\begin{equation}\label{EE1}
I_{\gamma,t}(\opnorm{ \bm{u}(\cdot) }_m)
\leq C_0\left\{ \|\bm{u}_0^\mathrm{in}\|_{X^m} + \|\bm{u}_1^\mathrm{in}\|_{X^{m-1}} + I_{\gamma,t}(\opnorm{\bm{f}(\cdot)}_{m-2})
 +S_{\gamma,t}^*(\|\dt^{m-1}\bm{f}(\cdot)\|_{L^2}) \right\}
\end{equation}
for any $t\in[0,T]$ and any $\gamma\geq\gamma_1$, where $C_0>0$ depends only on $m$ and $M_0$ and $\gamma_1>0$ depends also on $M_1$. 
\end{theorem}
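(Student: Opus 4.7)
The plan is to follow the three-stage scheme sketched in the Introduction. Uniqueness, together with the energy bound \eqref{EE1}, will be a consequence of Propositions \ref{prop:BEE} and \ref{prop:HOEE} applied to the difference of two solutions and to the constructed solution respectively, so the core task is existence. The key difficulty is that the nonlocal forcing $Q(s,t)\bm{u}'(1,t)$ cannot be treated as a lower order perturbation of $(A\bm{u}')'$, since the trace $\bm{u}'(1,t)$ sits at the same order as $\bm{u}'$; the introduction of the dissipative regularization $\varepsilon s\dot{\bm{u}}'$ in \eqref{rLS} is intended precisely to bypass this obstruction by supplying extra boundary regularity.

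First, for fixed $\varepsilon>0$ I would solve \eqref{rLS} with $Q\equiv O$. Using $A(s,t)\simeq s\mathrm{Id}$, the transformation $\bm{u}^\sharp(x_1,x_2,t)=\bm{u}(x_1^2+x_2^2,t)$ converts the one-dimensional degenerate system on $(0,1)$ into a uniformly symmetric hyperbolic system on the unit disc $D\subset\mathbb{R}^2$ with smooth coefficients, Dirichlet data on $\partial D$, and a strictly dissipative lower order term coming from the $\varepsilon$-regularization. Classical hyperbolic theory then yields a solution in $\bigcap_{j=0}^m C^j([0,T];H^{m-j}(D))$, which descends to $\mathscr{X}_T^m$ after restricting to radial profiles. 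A crucial byproduct of this step is a quantitative trace estimate for $\bm{u}'(1,\cdot)$: the parabolic effect of $\varepsilon s\dot{\bm{u}}'$ gives one extra degree of boundary regularity for the $\varepsilon$-dependent solution.

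Second, still for $\varepsilon>0$, I would treat the full regularized problem with a nontrivial $Q$ by Picard iteration: starting from a trivial guess, define $\bm{u}_{n+1}$ as the solution of the $Q\equiv O$ problem with forcing $\bm{f}+Q(s,t)\bm{u}_n'(1,t)$. The boundary trace estimate obtained above makes $\bm{u}_n'(1,\cdot)$ controlled in a space strictly weaker than the highest energy norm of $\bm{u}_n$, hence the map $\bm{u}_n\mapsto\bm{u}_{n+1}$ is a contraction on a short time interval; the iteration is then globalized on $[0,T]$ via the $\varepsilon$-dependent a priori bound.

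Third, pass to the limit $\varepsilon\to 0^+$. One chooses approximate data $(\bm{u}_0^{\mathrm{in},\varepsilon},\bm{u}_1^{\mathrm{in},\varepsilon})\to(\bm{u}_0^{\mathrm{in}},\bm{u}_1^{\mathrm{in}})$ in $X^m\times X^{m-1}$ satisfying the compatibility conditions for \eqref{rLS} in the sense of Definition \ref{def:CC1}. Since Proposition \ref{prop:HOEE} is stated uniformly in $\varepsilon$, it produces a bound on $\opnorm{\bm{u}^\varepsilon(t)}_m$ independent of $\varepsilon$; a subsequence then converges weakly-$*$ in each $L^\infty(0,T;X^{m-j})$, the limit satisfies \eqref{LS}, and strong continuity in time together with \eqref{EE1} is recovered by the standard Strauss-type regularization-and-energy argument. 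I expect the hard part to be exactly this construction of the regularized data: one must simultaneously preserve the compatibility conditions for \eqref{rLS} through order $m-1$ with the $\varepsilon$-dependent correction to the equation, ensure convergence in the correct weighted Sobolev norms, and keep the derived initial values of $\dt^j\bm{u}^\varepsilon|_{t=0}$ for $j=1,\dots,m$ bounded uniformly in $\varepsilon$, as these quantities feed directly into the energy estimate of Proposition \ref{prop:HOEE}.
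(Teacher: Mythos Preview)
Your three-stage scheme matches the paper's, and the role you assign to the regularization $\varepsilon s\dot{\bm u}'$ is exactly right: it provides the extra boundary trace regularity $\bm u'|_{s=1}\in H^{m-1}(0,T)$ (Propositions \ref{prop:BEE} and \ref{prop:HOEE}) that allows the Picard iteration on $Q(\cdot)\bm u'(1,t)$ to contract. Two technical points, however, are glossed over and are handled explicitly in the paper.

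First, the disc transformation requires smooth coefficients. Under Assumptions \ref{ass:BEE}--\ref{ass:HOEE} one only has $A'\in\mathscr X_T^{m-2}\cap\mathscr X_T^{2,*}$ and $Q\in\mathscr X_T^{m-2\vee1}$, which is not enough to invoke the standard hyperbolic theory on $D$ at order $m$. The paper therefore first carries out your three stages under the extra hypothesis $A,Q\in C^\infty(\overline{(0,1)\times(0,T)})$ (its Steps 1--2), and only afterwards approximates $A,Q$ by smooth matrices and passes to the limit (Step 3); you should insert this coefficient-mollification layer.

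Second, the passage $\varepsilon\to 0^+$ is done in the paper by a Cauchy argument, not by weak-$*$ compactness: one writes the equation for $\bm u^\varepsilon-\bm u^\eta$ with forcing $\varepsilon s\dot{\bm u}^{\varepsilon\prime}-\eta s\dot{\bm u}^{\eta\prime}$ and applies Proposition \ref{prop:HOEE}. This needs the solutions to sit in $\mathscr X_T^{m+1}$, whence the preliminary lift of the data to $X^{m+1}\times X^m$ and $\bm f\in\mathscr X_T^m$. Your weak-$*$ route runs into trouble precisely at the localized term: for $m=2$, Aubin--Lions only yields strong convergence in $\mathscr X_T^1$, which does not control the trace $\bm u^{\varepsilon\prime}(1,\cdot)$, so one cannot identify the limit of $Q\bm u^{\varepsilon\prime}(1,t)$. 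The paper meets this same obstruction when approximating the coefficients and therefore treats $m=2$ separately in a final Step 4, approximating the data by $m=3$ data (the assumptions in the two cases coincide) and using linearity plus Proposition \ref{prop:BEE} to pass to the limit.
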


\begin{remark}\label{re:1}
In view of \eqref{propSstar} we see that the solution obtain in Theorem \ref{Th1} satisfies 
\[
\opnorm{ \bm{u}(t) }_m
\leq C_0\mathrm{e}^{C_1t}\left( \|\bm{u}_0^\mathrm{in}\|_{X^m} + \|\bm{u}_1^\mathrm{in}\|_{X^{m-1}}
 + \sup_{0\leq t'\leq t}\opnorm{ \bm{f}(t') }_{m-2} + \int_0^t\|\dt^{m-1}\bm{f}(t')\|_{L^2}\mathrm{d}t' \right)
\]
for any $t\in[0,T]$, where $C_1>0$ depends only on $m$, $M_0$, and $M_1$. 
\end{remark}

We proceed to consider the linearized system \eqref{LEq} and \eqref{LBVP}. 
In order to state our result on the well-posedness of the problem \eqref{LEq} and \eqref{LBVP}, 
we need to impose precise assumptions on $\bm{x}$ and $\tau$. 
We recall that the coefficient matrices $A$ and $Q$ in the linearized problem \eqref{rLEq} are given by 
$A(s,t)=\tau(s,t)\mathrm{Id}$ and $Q(s,t)=-(\phi\bm{x}')'(s,t)\otimes(\bm{g}+2\tau\bm{x}'')(1,t)$. 
In order that these coefficient matrices satisfy Assumptions \ref{ass:BEE} and \ref{ass:HOEE}, we impose the following assumptions.

\begin{assumption}\label{ass:xtau}
Let $m\geq2$ be an integer, $T$, $M_0$, and $M_1$ be positive constants. 
 \setlength{\parskip}{-2mm}
\begin{enumerate}
 \setlength{\itemsep}{-0.5mm}
\item[{\rm (i)}]
For any $(s,t)\in(0,1)\times(0,T)$, it holds that 
\[
\begin{cases}
 M_0^{-1}s \leq \tau(s,t) \leq M_0s, \quad \bm{x}(1,t)=\bm{0}, \\
 \sup_{0\leq t\leq T}\bigl( \opnorm{ (\tau',\dot{\tau}')(t) }_{2,*} + \opnorm{ (\bm{x},\dot{\bm{x}})(t) }_4 \bigr) < \infty.
\end{cases}
\]
\item[{\rm (ii)}]
In the case $m=2$, for any $t\in(0,T)$ it holds that 
\[
\begin{cases}
 \|\tau'(t)\|_{L^\infty} + \|\bm{x}(t)\|_{X^3} \leq M_0, \\
 \|\dot{\tau}'(t)\|_{L^\infty}+\|\bm{x}(t)\|_{X^4}+\|\dot{\bm{x}}(t)\|_{X^3} \leq M_1.
\end{cases}
\]
\item[{\rm (iii)}]
In the case $m=3$, for any $t\in(0,T)$ it holds that 
\[
\begin{cases}
 \opnorm{ \tau'(t) }_{2,*} + \opnorm{ \bm{x}(t) }_4 \leq M_0, \\
 \opnorm{ \dot{\tau}'(t) }_{2,*} + \opnorm{ \dot{\bm{x}}(t) }_4 \leq M_1.
\end{cases}
\]
\item[{\rm (iv)}]
In the case $m\geq4$, for any $t\in(0,T)$ it holds that 
\[
\begin{cases}
 \opnorm{ \tau'(t) }_{m-2} + \opnorm{ (\bm{x},\dot{\bm{x}})(t) }_m \leq M_0, \\
 \opnorm{ \dot{\tau}'(t) }_{m-2} + \opnorm{ \ddot{\bm{x}}(t) }_m \leq M_1.
\end{cases}
\]
\end{enumerate}
\end{assumption}

In order to guarantee that the term $(\nu_\mathrm{l}\bm{x}')'$ in \eqref{rLEq} is of lower order, 
in addition to Assumption \ref{ass:xtau}, we impose the following assumptions.

\begin{assumption}\label{ass:addxtau}
Let $m\geq2$ be an integer, $T$, $M_0$, and $M_1$ be positive constants. 
 \setlength{\parskip}{-2mm}
\begin{enumerate}
 \setlength{\itemsep}{-0.5mm}
\item[{\rm (i)}]
In the case $m=2$, for any $t\in(0,T)$ it holds that 
\[
\begin{cases}
 \|\bm{x}'(t)\|_{L^\infty}+\|\dot{\bm{x}}(t)\|_{X^2} \leq M_0, \\
 \|\dot{\bm{x}}'(t)\|_{L^\infty}+\|\ddot{\bm{x}}(t)\|_{X^2} \leq M_1.
\end{cases}
\]
\item[{\rm (ii)}]
In the case $m=3$, $\bm{x}\in C^1([0,T];X^4)$ and $\|\dot{\bm{x}}(t)\|_{X^4}\leq M_0$ for $0\leq t\leq T$. 
\end{enumerate}
\end{assumption}

In order to obtain an optimal regularity of $\nu$ relative to $\bm{y}$, in addition to Assumptions \ref{ass:xtau} and \ref{ass:addxtau}, 
we impose the following assumptions.

\begin{assumption}\label{ass:addxtau2}
Let $T$ and $M_0$ be positive constants. 
In the case $m=2$, $\bm{x}\in C^1([0,T];X^4)$ and $\|(\bm{x},\dot{\bm{x}})(t)\|_{X^4}\leq M_0$ for $0\leq t\leq T$. 
\end{assumption}

The following theorem is another main result in this paper and gives a well-posedness of the problem \eqref{LEq} and \eqref{LBVP} in the weighted Sobolev space $X^m$.

\begin{theorem}\label{Th2}
Let $m\geq2$ be an integer, $T>0$, and assume that Assumptions \ref{ass:xtau} and \ref{ass:addxtau} are satisfied with positive constants $M_0$ and $M_1$. 
Suppose that the data $\bm{y}_0^\mathrm{in}\in X^m$, $\bm{y}_1^\mathrm{in}\in X^{m-1}$, $\bm{f}\in\mathscr{X}_T^{m-2}$, and $h$ 
satisfy $\dt^{m-1}\bm{f}\in L^1(0,T;L^2)$, $s^\frac12\dt^{m-2}h \in C^0([0,T];L^1)$, $s^\frac12\dt^{m-1}h \in L^1((0,1)\times(0,T))$. 
In the case $m\geq 3$, assume also that $h\in\mathscr{Y}_T^{m-3}$. 
In addition, suppose that the data satisfy the compatibility conditions up to order $m-1$ in the sense of Definition \ref{def:CC2} below. 
Then, there exists a unique solution $(\bm{y},\nu)$ to the problem \eqref{LEq} and \eqref{LBVP} in the class $\bm{y}\in\mathscr{X}_T^m$ and $\nu'\in\mathscr{X}_T^{m-2}$. 
Moreover, the solution satisfies the estimate 
\begin{align}\label{EstLP}
I_{\gamma,t}(\opnorm{ \bm{y}(\cdot) }_m + \opnorm{ \nu'(\cdot) }_{m-2})
&\leq C_0\bigl\{ \|\bm{y}_0^\mathrm{in}\|_{X^m} + \|\bm{y}_1^\mathrm{in}\|_{X^{m-1}} \\
&\qquad
 +I_{\gamma,t}( \opnorm{ \bm{f}(\cdot) }_{m-2} + \opnorm{ h(\cdot) }_{m-3}^\dag + \|s^\frac12\dt^{m-2}h(\cdot)\|_{L^1} ) \nonumber \\
&\qquad
 +S_{\gamma,t}^*(\|\dt^{m-1}\bm{f}(\cdot)\|_{L^2}) \bigr\} + C_1S_{\gamma,t}^*(\|s^\frac12\dt^{m-1}h(\cdot)\|_{L^1}) \nonumber
\end{align}
for any $t\in[0,T]$ and any $\gamma\geq\gamma_1$, where $C_0>0$ depends only on $m$ and $M_0$ and $C_1, \gamma_1>0$ depend also on $M_1$. 
Furthermore, if we assume additionally Assumption \ref{ass:addxtau2} and $h\in\mathscr{Y}_T^{m-2}$, then we have $\nu'\in\mathscr{X}_T^{m-1,*}$ and 
\begin{equation}\label{EstNu}
\opnorm{\nu'(t)}_{m-1,*} \leq C_0\bigl( \opnorm{ \bm{y}(t) }_m + \opnorm{ \nu'(t) }_{m-2} + \opnorm{h(t)}_{m-2}^\dag \bigr)
\end{equation}
for any $t\in[0,T]$. 
\end{theorem}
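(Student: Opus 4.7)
The plan is to reduce the coupled problem \eqref{LEq}--\eqref{LBVP} to the single degenerate hyperbolic problem \eqref{rLEq} via the decomposition $\nu=\nu_{\mathrm{p}}+\nu_{\mathrm{l}}$ with $\nu_{\mathrm{p}}$ given by \eqref{pnu}, and then to solve \eqref{rLEq} by a successive approximation in which Theorem \ref{Th1} is invoked at each step. As preparation, I would combine Assumption \ref{ass:xtau} with the estimates for the auxiliary function $\phi$ from \eqref{defphi}, obtained in Section \ref{sect:TBVP}, to check that the coefficient matrices $A(s,t)=\tau(s,t)\mathrm{Id}$ and $Q(s,t)=-(\phi\bm{x}')'(s,t)\otimes(\bm{g}+2\tau\bm{x}'')(1,t)$ appearing in \eqref{rLEq} satisfy Assumptions \ref{ass:BEE} and \ref{ass:HOEE}; the non-degeneracy $\tau\simeq s$ and the inhomogeneous boundary datum $\phi'|_{s=1}=1$ are exactly what is needed.

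For the iteration, I would initialize $\bm{y}^{(0)}$ using a suitable extension of the initial data built from the formulas for the time derivatives $\dt^j\bm{y}|_{t=0}$ derived in Section \ref{sect:EstIV2}, and given $\bm{y}^{(k)}$ I would solve the two-point boundary value problem \eqref{LBVP} for $\nu^{(k)}$, split it as $\nu^{(k)}=\nu_{\mathrm{p}}^{(k)}+\nu_{\mathrm{l}}^{(k)}$, and apply Theorem \ref{Th1} to \eqref{rLEq} with forcing $(\nu_{\mathrm{l}}^{(k)}\bm{x}')'+\bm{f}$ to produce $\bm{y}^{(k+1)}$. The crucial quantitative input is the estimate proved in Section \ref{sect:TBVP} asserting that $\nu_{\mathrm{l}}$ admits a genuine gain (in order or in the $s^{1/2}$ weight) relative to $\bm{y}$ under Assumption \ref{ass:addxtau}, so that $(\nu_{\mathrm{l}}\bm{x}')'$ is controlled in the $X^{m-2}$ norm by a \emph{strictly lower-order} quantity in $\bm{y}$. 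Another essential ingredient is that the compatibility conditions of Definition \ref{def:CC2} for the coupled system, combined with the initial values of $\dt^j\bm{y}$ from Section \ref{sect:EstIV2}, yield precisely the compatibility conditions of Definition \ref{def:CC1} for \eqref{rLEq} at each iteration step.

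Convergence of $\{\bm{y}^{(k)}\}$ would then be established by applying the same scheme to the differences $\bm{y}^{(k+1)}-\bm{y}^{(k)}$ in a lower-order energy norm. Because the bound on $\nu_{\mathrm{l}}^{(k+1)}-\nu_{\mathrm{l}}^{(k)}$ in terms of $\bm{y}^{(k+1)}-\bm{y}^{(k)}$ is strictly lower order, the $S_{\gamma,t}^*$ term on the right-hand side of \eqref{EE1}, controlled by $\gamma^{-1}I_{\gamma,t}$ via \eqref{propSstar}, produces a factor of $\gamma^{-1}$ that furnishes a contraction once $\gamma$ is chosen large enough. Passing to the limit together with the uniform bounds in the top norm gives $\bm{y}\in\mathscr{X}_T^m$ and, via the $\nu_{\mathrm{l}}$-estimate from Section \ref{sect:TBVP} together with the explicit formula \eqref{pnu} for $\nu_{\mathrm{p}}$, also $\nu'\in\mathscr{X}_T^{m-2}$, proving \eqref{EstLP}. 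Uniqueness follows from the same difference argument applied to two arbitrary solutions.

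For the improved regularity $\nu'\in\mathscr{X}_T^{m-1,*}$ and the bound \eqref{EstNu}, once $\bm{y}\in\mathscr{X}_T^m$ is in hand I would return to \eqref{LBVP} and invoke the sharper elliptic regularity available under Assumption \ref{ass:addxtau2} and $h\in\mathscr{Y}_T^{m-2}$, which upgrades $\nu'$ by one order in the indices associated with $\mathscr{X}_T^{m-1,*}$. I expect the main obstacle to be the gain-one verification that $(\nu_{\mathrm{l}}\bm{x}')'$ is genuinely lower order: without such a gain the iteration fails to close and Theorem \ref{Th1} cannot be applied recursively. This is exactly the point at which the detailed analysis of the two-point boundary value problem in Section \ref{sect:TBVP}, together with the auxiliary hypotheses \ref{ass:addxtau} and \ref{ass:addxtau2}, has to be engineered carefully so that the $S_{\gamma,t}^*$ absorption described above really applies at the relevant level of regularity.
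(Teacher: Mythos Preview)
Your proposal is correct and follows essentially the same approach as the paper: verify that $A=\tau\mathrm{Id}$ and $Q=-(\phi\bm{x}')'\otimes(\bm{g}+2\tau\bm{x}'')|_{s=1}$ satisfy Assumptions \ref{ass:BEE}--\ref{ass:HOEE}, run a Picard iteration in $\mathscr{X}_T^m$ by applying Theorem \ref{Th1} with forcing $\bm{f}+(\nu_{\mathrm{l}}^{(k)}\bm{x}')'$, and close the contraction via the $S_{\gamma,t}^*\leq\gamma^{-1}I_{\gamma,t}$ mechanism. One small clarification: the paper does \emph{not} descend to a lower-order norm for the contraction but works directly in $\opnorm{\cdot}_m$; the point is that the $\opnorm{\cdot}_{m-2}$ bound on the forcing difference gains one order (controlled by $\opnorm{\bm{u}^{(n)}}_{m-1}$), and since the differences have vanishing initial data this $I_{\gamma,t}(\opnorm{\cdot}_{m-1})$ term is itself bounded by $C\,S_{\gamma,t}^*(\opnorm{\cdot}_m)$ via \eqref{S*}, yielding the $\gamma^{-1}$ factor at the top level.
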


\begin{remark}\label{re:exist}
Although the map $(\bm{x},\tau)\mapsto(\bm{y},\nu)$ reveals loss of twice derivatives, by a standard procedure of a quasilinearization 
we can construct a unique solution $(\bm{x},\tau)$ to the nonlinear problem \eqref{HS} and \eqref{BVP} in the case $m\geq6$. 
However, a priori estimates for the solution were obtained in the case $m\geq4$ by Iguchi and Takayama \cite{IguchiTakayama2023}, 
so that it is natural to expect that the well-posedness of the problem holds also in the case $m=4,5$. 
In order to show this, we need detailed analysis on compatibility conditions to the initial data, 
which do not have any standard form due to a nonlocal property caused by the tension $\tau$. 
Therefore, we postpone this well-posedness part to the nonlinear problem \eqref{HS} and \eqref{BVP} in our future work. 
\end{remark}

\section{Basic properties of the weighted Sobolev spaces}\label{sect:pre}
In this preliminary section, we present basic properties of the weighted Sobolev spaces $X^m$ and $Y^m$ and related calculus inequalities. 
Many of them are proved in Takayama \cite{Takayama2018} and Iguchi and Takayama \cite{IguchiTakayama2023}. 
Let $D$ be the unit disc in $\mathbb{R}^2$ and $H^m(D)$ the $L^2$ Sobolev space of order $m$ on $D$. 
For a function $u$ defined in the open interval $(0,1)$, we define $u^\sharp(x_1,x_2)=u(x_1^2+x_2^2)$ which is a function on $D$.

\begin{lemma}[{\cite[Proposition 3.2]{Takayama2018}}]\label{lem:NormEq}
Let $m$ be a non-negative integer. 
The map $X^m\ni u \mapsto u^\sharp \in H^m(D)$ is bijective and it holds that $\|u\|_{X^m} \simeq \|u^\sharp\|_{H^m(D)}$ 
for any $u\in X^m$. 
\end{lemma}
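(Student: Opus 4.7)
The plan is to reduce $\|u^\sharp\|_{H^m(D)}^2$ to a sum of weighted $L^2$-norms on $(0,1)$ of derivatives of $u$, and then to match this sum with $\|u\|_{X^m}^2$. First, by induction on $|\beta|$ starting from $\partial_{x_i} u^\sharp = 2 x_i u'(|x|^2)$ and using the product rule, I would establish the chain-rule expansion
\[
\partial_x^\beta u^\sharp(x) = \sum_{k = \lceil |\beta|/2 \rceil}^{|\beta|} P_{k,\beta}(x)\, u^{(k)}(|x|^2),
\]
where each $P_{k,\beta}$ is a homogeneous polynomial in $x$ of degree $2k-|\beta|$ with constant coefficients.

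Next, for any homogeneous polynomial $P$ of degree $d$, the change to polar coordinates $x = r\omega$ followed by $s = r^2$ gives $\int_D |P(x)|^2 |u^{(k)}(|x|^2)|^2 \mathrm{d}x = c_P \int_0^1 s^d |u^{(k)}(s)|^2 \mathrm{d}s$ with $c_P = \tfrac{1}{2}\int_0^{2\pi}|P(\cos\theta,\sin\theta)|^2 \mathrm{d}\theta > 0$. Substituting this into the expansion of $|\partial^\beta u^\sharp|^2$, and handling cross-terms by Cauchy--Schwarz for the upper bound and by choosing specific $\beta$ (e.g.\ $\beta = (\ell,0)$) together with an induction on $\ell$ to peel off the top-order term for the lower bound, I deduce
\[
\|u^\sharp\|_{H^m(D)}^2 \simeq \sum_{\ell = 0}^m \sum_{k = \lceil \ell/2 \rceil}^{\ell} \bigl\| s^{k - \ell/2}\, u^{(k)} \bigr\|_{L^2}^2.
\]
I then match this with $\|u\|_{X^m}^2$: for $m = 2n$, the slice $\ell = m$ reproduces $\sum_{j=0}^{n}\|s^j u^{(n+j)}\|_{L^2}^2$, while the slices $\ell = 2j$ with $0 \leq j \leq n$ contribute $\|u^{(j)}\|_{L^2}^2$ at the top value $k = j$, together recovering $\|u\|_{X^{2n}}^2$. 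Any other term $\|s^{k-\ell/2} u^{(k)}\|_{L^2}^2$ is dominated by one already accounted for, using $s^a \leq s^b$ on $(0,1)$ whenever $a \geq b$: if $k \leq n$ the term is bounded by $\|u^{(k)}\|_{L^2}^2$, and if $k = n+j > n$ then $k-\ell/2 \geq j$ since $\ell \leq 2n$, so it is bounded by $\|s^j u^{(n+j)}\|_{L^2}^2$. The odd case $m = 2n+1$ is entirely analogous. Bijectivity onto the image is then immediate: the map is injective because $u(s) = u^\sharp(\sqrt{s},0)$ recovers $u$, and the norm equivalence identifies $X^m$ with its image in $H^m(D)$.

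The main obstacle I anticipate is the bookkeeping for the lower bound: the squares $|\partial^\beta u^\sharp|^2$ contain cross-terms of mixed signs that must be absorbed without losing any of the individual contributions $\|s^{k-\ell/2} u^{(k)}\|_{L^2}^2$, and the parity split $m = 2n$ vs.\ $m = 2n+1$ must be tracked throughout so that the specific weights $s^j$ and $s^{j-1/2}$ built into the definition of $X^m$ are matched exactly. The elementary weight comparison $s^a \leq s^b$ on $(0,1)$ ultimately makes the argument uniform in both directions.
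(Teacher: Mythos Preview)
The paper does not prove this lemma; it is quoted verbatim from \cite[Proposition~3.2]{Takayama2018}. So there is no in-paper argument to compare against, and I can only comment on the soundness of your sketch.

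Your chain-rule expansion and the polar-coordinate computation are correct, and they immediately give the upper bound $\|u^\sharp\|_{H^m(D)}\lesssim\|u\|_{X^m}$. The gap is in the lower bound, specifically in the claimed intermediate equivalence
\[
\|u^\sharp\|_{H^m(D)}^2 \simeq \sum_{\ell=0}^m \sum_{k=\lceil\ell/2\rceil}^{\ell}\|s^{\,k-\ell/2}u^{(k)}\|_{L^2}^2,
\]
which you justify by ``choosing specific $\beta$ and an induction on $\ell$ to peel off the top-order term.'' This does not go through as stated, because the cross terms are not merely absorbable noise: they can cancel a main term \emph{exactly} and replace it by a boundary trace. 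Already for $m=2$ one computes, using $\partial_{x_1}^2u^\sharp=2u'+4x_1^2u''$ and one integration by parts in $s$, that
\[
\sum_{|\beta|=2}\|\partial^\beta u^\sharp\|_{L^2(D)}^2 = 14\pi\,\|su''\|_{L^2}^2 + 8\pi\,|u'(1)|^2,
\]
so the expected contribution $\|u'\|_{L^2}^2$ (the $k=1$, $\ell=2$ term in your sum) has vanished from the second-order piece and been replaced by $|u'(1)|^2$. Your ``induction on $\ell$'' cannot recover $\|u'\|_{L^2}$ here, because at lower $\ell$ you only control the \emph{weaker} quantity $\|s^{1/2}u'\|_{L^2}$. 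What actually closes the argument is a Hardy-type step: writing $|u'(s)|^2=|u'(1)|^2-2\int_s^1 u'u''\,\mathrm{d}\sigma$ and integrating gives $\|u'\|_{L^2}^2\lesssim |u'(1)|^2+\|su''\|_{L^2}^2$, and conversely $|u'(1)|^2\lesssim\|u'\|_{L^2}^2+\|su''\|_{L^2}^2$ by a trace estimate on $[\tfrac12,1]$. The same phenomenon recurs at every order, so the genuine proof (as in \cite{Takayama2018}) has to track these boundary contributions and invoke such weighted Hardy inequalities; the simple weight comparison $s^a\le s^b$ on $(0,1)$ is not enough on its own.
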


\begin{lemma}[{\cite[Lemma 4.3]{IguchiTakayama2023}}]\label{lem:embedding}
For any $\epsilon>0$ there exists a positive constant $C_\epsilon=C(\epsilon)$ such that 
for any $u\in X^1$ we have $\|s^\epsilon u\|_{L^\infty} \leq C_\epsilon\|u\|_{X^1}$. 
\end{lemma}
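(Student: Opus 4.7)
The plan is to reduce the claim to the fundamental theorem of calculus on $(s,1)$, controlling the resulting integral of $u'$ by Cauchy--Schwarz against the weight $s^{1/2}$ already present in $\|\cdot\|_{X^1}$. Since $\|u\|_{X^1}^2 = \|u\|_{L^2}^2 + \|s^{1/2}u'\|_{L^2}^2$ controls $u'$ only against the degenerate weight $s^{1/2}$, one should expect at worst a logarithmic loss in $|u(s)|$ as $s\to 0$, which the extra factor $s^\epsilon$ will absorb.

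First I would verify that the trace $u(1)$ is well-defined and satisfies $|u(1)|\leq C\|u\|_{X^1}$: on the subinterval $(1/2,1)$ the weight $s^{1/2}$ is bounded below, so $u\in H^1((1/2,1))$ with norm controlled by $\|u\|_{X^1}$, and the standard one-dimensional Sobolev embedding $H^1\hookrightarrow L^\infty$ on a bounded interval gives the pointwise bound at $s=1$. For $s\in(0,1)$ I would then write $u(s)=u(1)-\int_s^1 u'(t)\,\mathrm{d}t$ and apply Cauchy--Schwarz after the factorization $u'(t)=t^{-1/2}\cdot t^{1/2}u'(t)$:
\[
 \biggl|\int_s^1 u'(t)\,\mathrm{d}t\biggr| \leq \biggl(\int_s^1 t^{-1}\,\mathrm{d}t\biggr)^{\!1/2}\|s^{1/2}u'\|_{L^2} = \sqrt{\log(1/s)}\,\|s^{1/2}u'\|_{L^2},
\]
which yields $|u(s)|\leq C\bigl(1+\sqrt{\log(1/s)}\bigr)\|u\|_{X^1}$.

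Multiplying through by $s^\epsilon$ finishes the argument: for any $\epsilon>0$ the function $s\mapsto s^\epsilon\sqrt{\log(1/s)}$ is continuous on $(0,1]$ and tends to $0$ as $s\to 0^+$, hence is bounded on $(0,1]$ by some constant $C_\epsilon$. Taking the supremum yields $\|s^\epsilon u\|_{L^\infty}\leq C_\epsilon\|u\|_{X^1}$. The main (and essentially only) obstacle is that the unweighted pointwise bound on $|u(s)|$ genuinely blows up logarithmically as $s\to 0$; this is consistent with the failure of $H^1(D)\hookrightarrow L^\infty(D)$ transferred through Lemma \ref{lem:NormEq}, and the arbitrarily small weight $s^\epsilon$ is precisely the remedy. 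An alternative route would be to transfer to $u^\sharp\in H^1(D)$ via Lemma \ref{lem:NormEq} and argue by Sobolev embedding on the disc, but the direct calculation above bypasses that machinery.
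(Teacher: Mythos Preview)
Your argument is correct. The paper does not give its own proof of this lemma; it merely cites \cite[Lemma~4.3]{IguchiTakayama2023}, so there is no ``paper's proof'' to compare against beyond what might appear in that reference. Your direct one-dimensional computation---controlling $|u(1)|$ via $H^1((\tfrac12,1))\hookrightarrow L^\infty$, then writing $u(s)=u(1)-\int_s^1 u'$ and applying Cauchy--Schwarz against the weight $t^{1/2}$ to extract the $\sqrt{\log(1/s)}$ factor---is clean and self-contained. The only minor point worth stating explicitly is that $u\in X^1$ implies $u\in H^1_{\mathrm{loc}}((0,1])$ (since the weight $s^{1/2}$ is bounded below on compact subsets of $(0,1]$), so $u$ has an absolutely continuous representative on $(0,1]$ and the fundamental theorem of calculus is legitimate; you implicitly use this, and it is true.

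Your closing remark about the alternative route through Lemma~\ref{lem:NormEq} is apt: transferring to $u^\sharp\in H^1(D)$ and exploiting radial symmetry leads to essentially the same Cauchy--Schwarz computation in polar coordinates (radial $H^1(D)$ functions satisfy $|w(r)-w(1)|\lesssim\sqrt{\log(1/r)}\,\|w\|_{H^1(D)}$ by the identical argument), so the two routes are the same calculation in different coordinates rather than genuinely distinct methods. Your direct approach avoids the detour and is preferable here.
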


\begin{lemma}[{\cite[Lemma 4.5]{IguchiTakayama2023}}]\label{lem:embedding2}
For a non-negative integer $m$, we have $\|su'\|_{X^m} \leq \|u\|_{X^{m+1}}$, $\|u'\|_{X^m} \leq \|u\|_{X^{m+2}}$, 
and $\|\ds^mu\|_{L^\infty} \lesssim \|u\|_{X^{2m+2}}$. 
\end{lemma}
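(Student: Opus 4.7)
The plan is to verify each of the three inequalities directly from the definition \eqref{WSS} of the $X^m$ norm, treating the cases $m=2k$ and $m=2k+1$ separately. All three claims are elementary comparisons of norm-defining sums; only the combinatorial bookkeeping needs some care.

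For the first inequality $\|su'\|_{X^m}\leq\|u\|_{X^{m+1}}$, I would apply the Leibniz rule $\ds^i(su')=s\ds^{i+1}u+i\ds^iu$ to every derivative appearing when $u$ is replaced by $su'$ in the right-hand side of \eqref{WSS}. Each resulting weighted $L^2$ piece is then matched to a corresponding term of $\|u\|_{X^{m+1}}^2$. The key observation is that, passing from $X^m$ to $X^{m+1}$, the prescribed weight on a derivative of given order gains an extra half-power of $s$, while expanding $\ds^{k+j}(su')$ produces the weight $s^{j+1}$ in front of $\ds^{k+j+1}u$; since $s\in(0,1)$, the elementary bound $s^{a+1/2}\leq s^a$ (and $s\leq s^{1/2}$) transfers each required piece into a term already present in $\|u\|_{X^{m+1}}^2$. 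In particular, the top-order contribution $\|s\,\ds^{k+1}u\|_{L^2}$ arising in the $H^k$ part of $\|su'\|_{X^m}$ is absorbed by the lowest weighted term $\|s^{1/2}\ds^{k+1}u\|_{L^2}$ present in $\|u\|_{X^{m+1}}^2$, and all lower-order pieces are dominated by $\|u\|_{H^k}$.

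The second inequality $\|u'\|_{X^m}\leq\|u\|_{X^{m+2}}$ needs an even simpler comparison: the $H^k$ norm of $u'$ is trivially dominated by the $H^{k+1}$ norm of $u$, and for each weighted term $\|s^\alpha\ds^{k+j+1}u\|_{L^2}$ appearing in $\|u'\|_{X^m}^2$, the corresponding term indexed by $j'=j$ in the second sum of $\|u\|_{X^{m+2}}^2$ has exactly the same weight and derivative order, once the outer Sobolev index has shifted from $k$ to $k+1$. So each summand of $\|u'\|_{X^m}^2$ appears as a summand of $\|u\|_{X^{m+2}}^2$ modulo cases that amount to truncating the last index of the latter sum.

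The third inequality $\|\ds^mu\|_{L^\infty}\lesssim\|u\|_{X^{2m+2}}$ is immediate from \eqref{WSS} together with one-dimensional Sobolev embedding: reading \eqref{WSS} with $m$ replaced by $2m+2$ and $k=m+1$ shows $\|u\|_{X^{2m+2}}^2\geq\|u\|_{H^{m+1}}^2$, so $\ds^mu\in H^1(0,1)$ with $\|\ds^mu\|_{H^1}\leq\|u\|_{H^{m+1}}\leq\|u\|_{X^{2m+2}}$, and the standard embedding $H^1(0,1)\hookrightarrow L^\infty(0,1)$ concludes. No serious obstacle is expected; the main point is the correct handling of the $H^k$ part of $\|su'\|_{X^m}^2$ in the first inequality and the consistent parity split in the exponents of the weights.
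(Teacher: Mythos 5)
The paper itself gives no proof of this lemma: it is imported verbatim from \cite[Lemma 4.5]{IguchiTakayama2023}, so there is no internal argument to compare yours against; I can only assess your verification on its own terms, and it is essentially correct and the natural one. The second inequality is indeed an exact term-by-term identification: writing $m=2k$ (resp.\ $m=2k+1$), the weighted summands of $\|u'\|_{X^m}^2$, namely $\|s^j\ds^{k+j+1}u\|_{L^2}^2$ (resp.\ $\|s^{j-\frac12}\ds^{k+j+1}u\|_{L^2}^2$), coincide with the corresponding summands of $\|u\|_{X^{m+2}}^2$ after the shift $k\mapsto k+1$, and $\|u'\|_{H^k}\leq\|u\|_{H^{k+1}}$, so this part even comes with constant $1$. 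The third inequality is correctly reduced to $\|u\|_{X^{2m+2}}\geq\|u\|_{H^{m+1}}$ plus the embedding $H^1(0,1)\hookrightarrow L^\infty(0,1)$. For the first inequality your Leibniz bookkeeping ($\ds^i(su')=s\ds^{i+1}u+i\ds^iu$ followed by $s^{a+\frac12}\leq s^a$ on $(0,1)$) is sound, but be aware that each derivative of $su'$ produces \emph{two} pieces, so the triangle inequality only yields $\|su'\|_{X^m}\leq C(m)\|u\|_{X^{m+1}}$, not the literal constant-$1$ bound in the statement; in fact the constant-$1$ version fails as written (for $m=1$ and $u=s^{10}$ one computes $\|su'\|_{X^1}^2=\tfrac{100}{21}+500$, which exceeds $\|u\|_{X^2}^2=\tfrac{1}{21}+\tfrac{100}{19}+\tfrac{8100}{19}$). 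Since every invocation of the lemma in this paper uses it only up to a multiplicative constant, this is harmless, but your write-up should state the first inequality in the form $\|su'\|_{X^m}\lesssim\|u\|_{X^{m+1}}$, which is what your matching argument actually proves.
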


\begin{lemma}[{\cite[Lemma 4.6]{IguchiTakayama2023}}]\label{lem:CalIneqLp1}
For a positive integer $k$ there exists a positive constant $C$ such that for any $p\in[2,\infty]$ we have 
\[
\begin{cases}
 \|s^{j-\frac12-\frac1p}\ds^{k+j-1}u\|_{L^p} \leq C\|u\|_{X^{2k}} &\mbox{for}\quad j=1,2,\ldots,k, \\
 \|s^{j-\frac1p}\ds^{k+j}u\|_{L^p} \leq C\|u\|_{X^{2k+1}} &\mbox{for}\quad j=1,2,\ldots,k.
\end{cases}
\]
\end{lemma}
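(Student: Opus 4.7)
The plan is to reduce the full range $p\in[2,\infty]$ to its two endpoints: the $p=2$ case is immediate from the very definitions of $X^{2k}$ and $X^{2k+1}$, the $p=\infty$ case will follow from the fundamental theorem of calculus combined with a one-dimensional Sobolev trace at $s=1$, and the intermediate $p$ are then recovered by an elementary Hölder-type interpolation whose weight exponent is dictated by a straightforward algebraic identity. At $p=2$ the first estimate reads $\|s^{j-1}\ds^{k+j-1}u\|_{L^2}\lesssim\|u\|_{X^{2k}}$ for $j=1,\dots,k$: after the shift $j'=j-1$ this is either the summand of index $j'\in\{1,\dots,k-1\}$ in the weighted part of $\|u\|_{X^{2k}}^2$ or, when $j=1$, the trivial bound $\|\ds^ku\|_{L^2}\le\|u\|_{H^k}$; the second estimate at $p=2$ is handled by the same shift inside the half-integer-weighted sum defining $\|u\|_{X^{2k+1}}^2$.

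For the endpoint $p=\infty$, which carries the main work, I would write
\[
\ds^{k+j-1}u(s) = \ds^{k+j-1}u(1) - \int_s^1 \ds^{k+j}u(t)\,dt
\]
and estimate each piece after multiplying by $s^{j-1/2}$. Cauchy--Schwarz against $t^{-j}\cdot t^j$ yields
\[
\Big|\int_s^1 \ds^{k+j}u(t)\,dt\Big| \le \Big(\int_s^1 t^{-2j}\,dt\Big)^{1/2}\|s^j\ds^{k+j}u\|_{L^2} \le \frac{1}{\sqrt{2j-1}}\,s^{\frac12-j}\|u\|_{X^{2k}},
\]
so the $s^{j-1/2}$ factor exactly cancels the singular prefactor and the integral contribution is bounded uniformly in $s$. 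For the boundary contribution, $s^{j-1/2}\le1$ reduces matters to $|\ds^{k+j-1}u(1)|$; on $(\tfrac12,1)$ the weights $s^{j-1}$ and $s^j$ are bounded below, so the summands $\|s^{j-1}\ds^{k+j-1}u\|_{L^2}$ and $\|s^j\ds^{k+j}u\|_{L^2}$ of $X^{2k}$ together control $\|\ds^{k+j-1}u\|_{H^1(1/2,1)}$, and the one-dimensional Sobolev trace at $s=1$ finishes the bound. The second estimate follows along the same lines, with the integral contribution controlled by $\|s^{j+1/2}\ds^{k+j+1}u\|_{L^2}$ (which sits in $X^{2k+1}$ precisely because $j\le k$) and the trace handled by the analogous $H^1(\tfrac12,1)$ argument.

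For $p\in(2,\infty)$ I would then apply the pointwise factorization $|v|^p\le|v|^2(\sup|v|)^{p-2}$, redistributing the weight via the identity $p(j-\tfrac12-\tfrac1p)=2(j-1)+(p-2)(j-\tfrac12)$ to get
\[
\|s^{j-\frac12-\frac1p}\ds^{k+j-1}u\|_{L^p}^p \le \|s^{j-1}\ds^{k+j-1}u\|_{L^2}^2\,\|s^{j-\frac12}\ds^{k+j-1}u\|_{L^\infty}^{p-2};
\]
inserting the endpoint bounds produces a constant independent of $p$, and the second family is interpolated with $\alpha_2=j-\tfrac12$, $\alpha_\infty=j$. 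The step I expect to require the most care is the weight accounting at the extremal index $j=k$, where the Cauchy--Schwarz estimate must draw on the top summand $\|s^k\ds^{2k}u\|_{L^2}$ of $X^{2k}$ and the trace must place $\ds^{2k-1}u$ in $H^1(\tfrac12,1)$; once this matching is verified, the rest of the argument is purely mechanical.
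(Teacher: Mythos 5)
Your proposal is correct. Note, however, that the paper does not prove this lemma at all: it is quoted verbatim from \cite[Lemma 4.6]{IguchiTakayama2023}, so there is no in-paper argument to compare against. Taken on its own terms, your proof is sound: the $p=2$ case is indeed just a re-indexing of the summands in \eqref{WSS} (for the second family no shift is even needed, since $\|s^{j-\frac12}\ds^{k+j}u\|_{L^2}$ is literally the $j$-th weighted summand of $\|u\|_{X^{2k+1}}$); the $p=\infty$ case via $\ds^{k+j-1}u(s)=\ds^{k+j-1}u(1)-\int_s^1\ds^{k+j}u$, Cauchy--Schwarz with the split $t^{-j}\cdot t^j$, and the $H^1(\tfrac12,1)$ trace is correct, and I checked that at every index $1\le j\le k$ the required weighted norms ($\|s^{j-1}\ds^{k+j-1}u\|_{L^2}$, $\|s^{j}\ds^{k+j}u\|_{L^2}$, respectively $\|s^{j-\frac12}\ds^{k+j}u\|_{L^2}$, $\|s^{j+\frac12}\ds^{k+j+1}u\|_{L^2}$) are genuinely available as summands of $\|u\|_{X^{2k}}$ or $\|u\|_{X^{2k+1}}$, including the extremal case $j=k$ that you flag; and the exponent identities $p(j-\tfrac12-\tfrac1p)=2(j-1)+(p-2)(j-\tfrac12)$ and $p(j-\tfrac1p)=2(j-\tfrac12)+(p-2)j$ both check out, giving a constant that is the maximum of the two endpoint constants and hence uniform in $p$. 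The one point worth making explicit in a final write-up is the justification of the fundamental-theorem identity and of the pointwise value $\ds^{k+j-1}u(1)$ for a general $u\in X^{2k}$; this follows either by density of smooth functions (via Lemma \ref{lem:NormEq} and density in $H^{2k}(D)$) or by observing that the weighted bounds place $\ds^{k+j}u$ in $L^2_{\mathrm{loc}}((0,1])$, so $\ds^{k+j-1}u$ has an absolutely continuous representative on every $[\delta,1]$. Your purely one-dimensional route (fundamental theorem of calculus plus interpolation) is more elementary than the lift to the unit disc that underlies much of the companion paper's toolkit, and it has the advantage of making the uniformity of $C$ in $p$ completely transparent.
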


\begin{lemma}[{\cite[Lemma 4.7]{IguchiTakayama2023}}]\label{lem:Algebra}
For a non-negative integer $m$, we have $\|uv\|_{L^2}\lesssim\|u\|_{X^1}\|v\|_{X^1}$ and $\|uv\|_{X^m} \lesssim\|u\|_{X^{m \vee 2}} \|v\|_{X^m}$. 
\end{lemma}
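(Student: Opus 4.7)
The plan is to transport both estimates to the unit disc $D\subset\mathbb{R}^2$ via the isomorphism $u\mapsto u^\sharp$ of Lemma~\ref{lem:NormEq}, which satisfies the pointwise identity $(uv)^\sharp=u^\sharp v^\sharp$. Once on $D$, both inequalities reduce to classical product estimates in the standard two-dimensional Sobolev spaces $H^m(D)$, with no weights or degeneracies remaining. The argument is thus essentially a dictionary translation between the weighted one-dimensional framework on $(0,1)$ and the usual two-dimensional framework on $D$.

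For the first inequality, I would first verify, by the change of variables $s=r^2$ in polar coordinates, that $\|uv\|_{L^2((0,1))}\simeq\|u^\sharp v^\sharp\|_{L^2(D)}$. Then H\"older's inequality gives $\|u^\sharp v^\sharp\|_{L^2(D)}\leq\|u^\sharp\|_{L^4(D)}\|v^\sharp\|_{L^4(D)}$, and the two-dimensional Sobolev embedding $H^1(D)\hookrightarrow L^4(D)$ applied to each factor, combined with the norm equivalence of Lemma~\ref{lem:NormEq}, yields $\|uv\|_{L^2}\lesssim\|u\|_{X^1}\|v\|_{X^1}$.

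For the second inequality I would split into the cases $m\geq 2$ and $m\in\{0,1\}$. In the case $m\geq 2$, the Banach algebra property $H^m(D)\cdot H^m(D)\hookrightarrow H^m(D)$ in dimension two, which follows from the Leibniz rule together with $H^m(D)\hookrightarrow L^\infty(D)$, directly yields $\|uv\|_{X^m}\lesssim\|u\|_{X^m}\|v\|_{X^m}$, matching $m\vee 2=m$. In the case $m\in\{0,1\}$, I would establish the Moser-type estimate $\|fg\|_{H^m(D)}\lesssim\|f\|_{H^2(D)}\|g\|_{H^m(D)}$ via $H^2(D)\hookrightarrow L^\infty(D)$ together with $H^1(D)\hookrightarrow L^4(D)$ when a derivative falls on $f$; transferring back through Lemma~\ref{lem:NormEq} with $f=u^\sharp$, $g=v^\sharp$ gives the required bound with $\|u\|_{X^{m\vee 2}}=\|u\|_{X^2}$.

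The only mildly delicate point I anticipate is the $m=1$ subcase of the Moser estimate, where one must control $\|\nabla u^\sharp\cdot v^\sharp\|_{L^2(D)}$ without an $L^\infty$ bound on $\nabla u^\sharp$. The natural route is to estimate it by $\|\nabla u^\sharp\|_{L^4(D)}\|v^\sharp\|_{L^4(D)}\lesssim\|u^\sharp\|_{H^2(D)}\|v^\sharp\|_{H^1(D)}$, using $H^1(D)\hookrightarrow L^4(D)$ applied to $\nabla u^\sharp$ (which lies in $H^1(D)$ since $u^\sharp\in H^2(D)$) and to $v^\sharp$. Aside from this routine point, the whole proof amounts to combining the two-dimensional Sobolev calculus on the disc with the isomorphism of Lemma~\ref{lem:NormEq}, and no further machinery is needed.
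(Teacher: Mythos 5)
Your proof is correct. The paper itself gives no argument for this lemma---it is imported verbatim from \cite[Lemma 4.7]{IguchiTakayama2023}---but your route, transporting the product to the unit disc via Lemma \ref{lem:NormEq} (using $(uv)^\sharp=u^\sharp v^\sharp$) and then invoking the classical two-dimensional facts $H^1(D)\hookrightarrow L^4(D)$, $H^2(D)\hookrightarrow L^\infty(D)$, and the algebra property of $H^m(D)$ for $m\geq2$, is exactly the mechanism on which this weighted framework is built, and every step, including the $m=1$ Moser case via $\|\nabla u^\sharp\|_{L^4(D)}\|v^\sharp\|_{L^4(D)}$, checks out.
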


\begin{lemma}[{\cite[Lemma 4.8]{IguchiTakayama2023}}]\label{lem:EstCompFunc1}
Let $m$ be an non-negative integer, $\Omega$ an open set in $\mathbb{R}^N$, and $F\in C^m(\Omega)$. 
There exists a positive constant $C=C(m,N)$ such that if $u\in X^m$ takes its value in a compact set $K$ in $\Omega$, then we have 
$\|F(u)\|_{X^m} \leq C\|F\|_{C^m(K)}(1+\|u\|_{X^m})^m$. 
If, in addition, $u$ depends also on time $t$, then we have also 
\[
\begin{cases}
 \opnorm{ F(u(t)) }_m \leq C\|F\|_{C^m(K)}(1+\opnorm{ u(t) }_m )^m, \\
 \opnorm{ F(u(t)) }_{m,*} \leq C\|F\|_{C^m(K)}(1+\opnorm{ u(t) }_{m,*} )^m.
\end{cases}
\]
\end{lemma}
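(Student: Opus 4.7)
The plan is to leverage Lemma \ref{lem:NormEq}, which identifies $X^m$ with $H^m(D)$ via $u \mapsto u^\sharp$, to reduce the time-independent estimate to a classical Moser-type composition inequality on the unit disc, and then to deduce the time-dependent estimates by expanding $\dt^j F(u(t))$ with Faà di Bruno's formula and invoking the algebra property of Lemma \ref{lem:Algebra}.

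First I would establish the static estimate $\|F(u)\|_{X^m} \leq C\|F\|_{C^m(K)}(1+\|u\|_{X^m})^m$. The key observation is that since $u=u(s)$ depends only on $s=x_1^2+x_2^2$, we have the identity $(F(u))^\sharp = F(u^\sharp)$ as functions on $D$. By Lemma \ref{lem:NormEq}, it therefore suffices to prove
\[
\|F(u^\sharp)\|_{H^m(D)} \leq C\|F\|_{C^m(K)}(1+\|u^\sharp\|_{H^m(D)})^m,
\]
which is the standard Moser-type composition estimate on the bounded Lipschitz domain $D$ for a function $u^\sharp$ taking values in a fixed compact set $K \subset \Omega$. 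This bound follows from Faà di Bruno's formula together with the usual product estimate for classical Sobolev spaces on $D$; the fact that $u^\sharp$ is a priori bounded (through the compactness of $K$) removes the need for any further embedding hypothesis on $m$.

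For the time-dependent versions I would apply Faà di Bruno: for $0 \leq j \leq m$,
\[
\dt^j F(u(t)) = \sum_{k,(j_1,\ldots,j_k)} c_{k;j_1,\ldots,j_k}\, F^{(k)}(u(t)) \prod_{i=1}^k \dt^{j_i} u(t),
\]
where the sum ranges over $1 \leq k \leq j$ and compositions $j_1+\cdots+j_k = j$ with $j_i \geq 1$. Each factor $\dt^{j_i}u(t)$ belongs to $X^{m-j_i} \hookrightarrow X^{m-j}$. I would then estimate each product in $X^{m-j}$ by applying the algebra inequality of Lemma \ref{lem:Algebra} iteratively: $\|v_1 v_2\|_{X^{m-j}} \lesssim \|v_1\|_{X^{(m-j)\vee 2}} \|v_2\|_{X^{m-j}}$, where the $(m-j)\vee 2$ factor is absorbed into $\opnorm{u(t)}_m$ since the extra regularity is carried by the lowest-order time derivatives. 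For the factor $F^{(k)}(u(t))$, which carries no time derivatives, I would apply the static estimate just proved (with $F$ replaced by $F^{(k)}$) in $X^{m-j}$. Summing over $j=0,\ldots,m$ and collecting powers yields the bound on $\opnorm{F(u(t))}_m$, and the analogous argument restricted to $j \leq m-1$ yields the bound on $\opnorm{F(u(t))}_{m,*}$, noting that every term in Faà di Bruno at order $j \leq m-1$ involves only time derivatives of $u$ of order $\leq m-1$.

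The main technical nuisance is the $(m-j)\vee 2$ threshold in the algebra inequality: when $m-j \leq 1$ one needs the $X^2$-norm of at least one factor, which must be traced back carefully so that no term appears with more than $m$ total $X^m$-norms, thereby preserving the $(1+\opnorm{u(t)}_m)^m$ power. The cleanest way is to arrange the product so that the $X^{m-j}$ slot is taken by the factor with the most time derivatives (hence the least spatial regularity), and the remaining factors, all with fewer time derivatives, are placed in the $X^{(m-j)\vee 2}$ slot, which is controlled by $\opnorm{u(t)}_m$ because $j \leq m$ implies $2 \leq m-j_i+(j-j_i)$ for the relevant indices when $m \geq 2$; the small cases $m=0,1$ reduce to $L^\infty$ bounds handled directly by the static inequality.
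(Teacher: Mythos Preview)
The paper does not prove this lemma; it is quoted verbatim from \cite[Lemma~4.8]{IguchiTakayama2023}, so there is no in-paper argument to compare against. That said, your static argument---pulling back to $H^m(D)$ via $u\mapsto u^\sharp$, noting $(F(u))^\sharp=F(u^\sharp)$, and invoking a standard Moser composition estimate for bounded $u^\sharp$---is precisely the natural route and is correct.

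For the time-dependent norms, your Fa\`a di Bruno plus Lemma~\ref{lem:Algebra} outline is sound in spirit, but the edge case you single out is not quite the one that bites. The trouble is not $m\in\{0,1\}$ but rather $m-j\le 1$ combined with $k$ close to $j$: then the factor $F^{(k)}(u)$ would have to be placed in $X^{(m-j)\vee 2}=X^2$, yet $F^{(k)}\in C^{m-k}$ may only be $C^1$, so the static estimate in $X^2$ is unavailable. Concretely, at $m=2$, $j=1$ one has $\dt F(u)=F'(u)\,\dt u$, and neither ordering in Lemma~\ref{lem:Algebra} closes: putting $F'(u)$ in $X^2$ demands $F'\in C^2$, while putting $\dt u$ in $X^2$ is not controlled by $\opnorm{u}_2$. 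The estimate is still true, but one must argue directly with weighted $L^p$ bounds such as Lemma~\ref{lem:CalIneqLp1} (e.g.\ $\|s^{1/2}u'\|_{L^\infty}\lesssim\|u\|_{X^2}$) rather than through the algebra lemma alone.

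A cleaner route that sidesteps all of this bookkeeping is to observe that $\dt$ commutes with $\sharp$, so that $\|\dt^jF(u(t))\|_{X^{m-j}}\simeq\|\dt^jF(u^\sharp(t))\|_{H^{m-j}(D)}$ by Lemma~\ref{lem:NormEq}; the full norms $\opnorm{F(u(t))}_m$ and $\opnorm{F(u(t))}_{m,*}$ are then handled in one stroke by the standard space-time Moser composition estimate in $H^m(D)$.
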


\begin{lemma}[{\cite[Lemma 4.9]{IguchiTakayama2023}}]\label{lem:commutator}
Let $j$ be a non-negative integer. 
It holds that 
\begin{align*}
& \|s^\frac{j}{2}[\ds^{j+1},u]v\|_{L^2} \lesssim
\begin{cases}
 \min\{ \|u'\|_{L^\infty}\|v\|_{L^2}, \|u'\|_{L^2}\|v\|_{L^\infty} \} &\mbox{for}\quad j=0, \\
 \min\{ \|u'\|_{X^2}\|v\|_{X^1}, \|u'\|_{X^1}\|v\|_{X^2} \} &\mbox{for}\quad j=1, \\
 \|u'\|_{X^j}\|v\|_{X^j} &\mbox{for}\quad j\geq2.
\end{cases}
\end{align*}
\end{lemma}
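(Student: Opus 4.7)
The plan is to expand the commutator by Leibniz,
\[
[\ds^{j+1},u]v = \sum_{k=1}^{j+1}\binom{j+1}{k}(\ds^k u)(\ds^{j+1-k}v),
\]
and then, in each resulting product $(\ds^k u)(\ds^{j+1-k}v)$, to distribute the weight $s^{j/2}$ asymmetrically, placing the bulk of the weight on the factor with the higher number of derivatives and absorbing the other factor in $L^\infty$.

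The case $j=0$ is immediate from $[\ds, u]v = u'v$ and H\"older. For $j = 1$, only the two summands $u''v$ and $2u'v'$ appear; each half of the minimum is obtained by routing the $s^{1/2}$ weight to one of the two factors and invoking $\|s^{1/2}w\|_{L^\infty}\lesssim\|w\|_{X^1}$ (Lemma \ref{lem:embedding}) together with $\|w\|_{L^\infty}\lesssim\|w\|_{X^2}$ and $\|w''\|_{L^2}\leq\|w'\|_{X^2}$, the last two being deduced from Lemma \ref{lem:embedding2} and the definition of $X^2$.

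For $j\geq 2$, writing $k'=k-1$ and $\ell=j+1-k$ so that $k'+\ell=j$, I would handle three regimes. In the extremal regime ($k'=0$ or $\ell=0$), the full weight $s^{j/2}$ is placed on the top-order factor $\ds^j u'$ or $\ds^j v$, whose weighted $L^2$ norm is bounded by $\|u'\|_{X^j}$ or $\|v\|_{X^j}$ directly from the definition of $X^j$; the remaining factor is bounded in $L^\infty$ via $\|w\|_{L^\infty}\lesssim\|w\|_{X^2}\leq\|w\|_{X^j}$. In the regime $k'\geq\lceil j/2\rceil$ with $\ell\geq 1$, I apply H\"older with $L^\infty\cdot L^2$, bounding $\|s^\alpha \ds^{k'}u'\|_{L^\infty}\lesssim\|u'\|_{X^j}$ via Lemma \ref{lem:CalIneqLp1} with $p=\infty$ for a suitable $\alpha$, and then $\|s^{j/2-\alpha}\ds^\ell v\|_{L^2}\lesssim\|v\|_{X^j}$ via the definition of $X^j$ or Lemma \ref{lem:CalIneqLp1} with $p=2$, possibly absorbing a non-negative surplus weight through $s\leq 1$. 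The symmetric regime $\ell\geq\lceil j/2\rceil$ with $k'\geq 1$ is handled analogously by interchanging the roles of $u'$ and $v$, using the Sobolev embedding $H^1\hookrightarrow L^\infty$ together with the inclusion $X^j\hookrightarrow H^{\lfloor j/2\rfloor}$ when the derivative order on $u'$ is strictly less than $\lfloor j/2\rfloor$.

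The main obstacle will be the bookkeeping: the admissible (weight exponent, derivative order) pairs in Lemma \ref{lem:CalIneqLp1} form a rigid arithmetic pattern whose shape depends on the parity of $j$, so one must verify by explicit case analysis that for each admissible pair $(k',\ell)$ within each regime a consistent choice of the splitting parameter $\alpha$ is available and that the residual weight exponent is non-negative (so that $s\leq 1$ can be used to absorb it). Once these verifications are carried out separately for even and odd $j$, the remaining steps reduce to a routine combination of H\"older's inequality with the weighted inequalities of Lemmas \ref{lem:embedding}--\ref{lem:CalIneqLp1}.
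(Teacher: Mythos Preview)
The paper does not give its own proof of this lemma; it is quoted verbatim from \cite[Lemma~4.9]{IguchiTakayama2023} and used as a black box. So there is nothing in the present paper to compare your argument against.

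That said, your outline is the natural one and is essentially how such weighted commutator estimates are proved. The Leibniz expansion is correct, and your treatment of $j=0,1$ is complete and accurate: the four product estimates you list for $j=1$ match exactly the definitions of $X^1$, $X^2$ and the embeddings of Lemmas~\ref{lem:embedding} and~\ref{lem:embedding2}. For $j\ge 2$, the strategy of splitting the weight $s^{j/2}=s^\alpha\cdot s^{j/2-\alpha}$ and routing the two factors through $L^\infty\times L^2$ with Lemma~\ref{lem:CalIneqLp1} is the right one, and your identification of the parity-dependent bookkeeping as the only real work is honest. One small point: in the regime where the lower-order factor carries strictly fewer than $\lfloor j/2\rfloor$ derivatives, you will not need a separate appeal to $H^1\hookrightarrow L^\infty$; the $H^{\lfloor j/2\rfloor}$ part of the $X^j$ norm already controls it in $L^\infty$ directly. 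Otherwise the plan is sound and, once the case analysis is written out, would constitute a complete proof.
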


\begin{lemma}[{\cite[Lemma 9.1]{IguchiTakayama2023}}]\label{lem:EstAu}
If $a|_{s=0}$, then we have 
\[
\|(au')'\|_{X^m} \lesssim
\begin{cases}
 \min\{ \|a'\|_{L^\infty} \|u\|_{X^2}, \|a'\|_{X^1} \|u\|_{X^3} \} &\mbox{for}\quad m=0, \\
 \min\{ \|a'\|_{X^{m \vee 2}} \|u\|_{X^{m+2}}, \|a'\|_{X^m} \|u\|_{X^{m+2 \vee 4}} \} &\mbox{for}\quad m=0,1,2,\ldots.
\end{cases}
\]
\end{lemma}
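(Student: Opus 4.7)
The plan is to split $(au')' = a'u' + au''$ and estimate each summand in $X^m$ separately. The first summand is handled by the algebra inequality of Lemma \ref{lem:Algebra} together with the derivative-loss estimate $\|u'\|_{X^m}\leq\|u\|_{X^{m+2}}$ of Lemma \ref{lem:embedding2}. For the second summand, the hypothesis $a|_{s=0}=0$ is exploited through the Hardy-type representation $a(s)=s\tilde{a}(s)$ with $\tilde{a}(s)=\int_0^1 a'(\sigma s)\,\mathrm{d}\sigma$, so that $au''=\tilde{a}\cdot(su'')$. The key point is that $su''$ is already a quantity controlled by $X^{m+2}$, while $\tilde{a}$ inherits the weighted-Sobolev regularity of $a'$.

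Two auxiliary facts carry the argument. First, the weighted Hardy estimate $\|\tilde{a}\|_{X^k}\lesssim\|a'\|_{X^k}$ for every non-negative integer $k$: differentiating under the integral yields $\partial_s^j\tilde{a}(s)=\int_0^1\sigma^j(\partial_s^j a')(\sigma s)\,\mathrm{d}\sigma$; inserting this into each summand of the definition \eqref{WSS}, applying Minkowski's integral inequality, and changing variables $\tau=\sigma s$ inside the $L^2$-norm reduces matters to an integral of the form $\int_0^1 \sigma^\alpha\,\mathrm{d}\sigma$ with $\alpha>-1$, hence finite. Second, $\|su''\|_{X^m}\lesssim\|u\|_{X^{m+2}}$, which follows from Lemma \ref{lem:embedding2} applied twice (taking $v=u'$) or directly from inspection of \eqref{WSS} after distributing the $s$-weight among the higher-order derivative terms.

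Given these facts, the case $m=0$ is immediate. For the first alternative I use the pointwise bound $|a(s)|\leq s\|a'\|_{L^\infty}$ to obtain $\|au''\|_{L^2}\leq\|a'\|_{L^\infty}\|su''\|_{L^2}\leq\|a'\|_{L^\infty}\|u\|_{X^2}$ and similarly $\|a'u'\|_{L^2}\leq\|a'\|_{L^\infty}\|u\|_{X^2}$. For the second alternative, Lemma \ref{lem:Algebra} yields $\|a'u'\|_{L^2}\lesssim\|a'\|_{X^1}\|u\|_{X^3}$ and $\|\tilde{a}(su'')\|_{L^2}\lesssim\|\tilde{a}\|_{X^1}\|su''\|_{X^1}\lesssim\|a'\|_{X^1}\|u\|_{X^3}$. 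For general $m\geq 0$, applying Lemma \ref{lem:Algebra} in $X^m$ with the higher-order norm placed on $a'$ gives $\|a'\|_{X^{m\vee 2}}\|u\|_{X^{m+2}}$, whereas placing it on the $u$-factor and using $\|u'\|_{X^{m\vee 2}}\leq\|u\|_{X^{m+2\vee 4}}$ together with $\|su''\|_{X^{m\vee 2}}\lesssim\|u\|_{X^{m+2\vee 4}}$ produces the dual bound $\|a'\|_{X^m}\|u\|_{X^{m+2\vee 4}}$.

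The main technical obstacle is the weighted Hardy estimate $\|\tilde{a}\|_{X^k}\lesssim\|a'\|_{X^k}$: although heuristically transparent, it requires a careful verification that the $s^j$-weights in \eqref{WSS} interact cleanly with the substitution $\tau=\sigma s$ and that the $H^k$-piece of $\|a'\|_{X^k}$ correctly dominates its counterpart for $\tilde{a}$. An alternative that bypasses this step is to bound $s^j\partial_s^{k+j}(au'')$ directly by the Leibniz rule, absorbing one power of $s$ into the lowest-order occurrence of $a$ via $|a(s)|\leq s\|a'\|_{L^\infty(0,s)}$, at the cost of more tedious bookkeeping of the derivative counts.
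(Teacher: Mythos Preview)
Your proposal is correct, and since the paper does not prove this lemma but merely cites it from \cite{IguchiTakayama2023}, there is no in-paper proof to compare against. The decomposition $(au')'=a'u'+\tilde a\,(su'')$ with $\tilde a(s)=\int_0^1 a'(\sigma s)\,\mathrm d\sigma$ is the natural route, and in fact your $\tilde a$ is precisely $\mathscr M(a')$ with the averaging operator already introduced in \eqref{defM}; your ``main technical obstacle'' $\|\tilde a\|_{X^k}\lesssim\|a'\|_{X^k}$ is exactly the last assertion of Lemma~\ref{lem:WEM1}, so no separate verification is needed.

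One small correction: the claim that $\|su''\|_{X^m}\lesssim\|u\|_{X^{m+2}}$ ``follows from Lemma~\ref{lem:embedding2} applied twice'' does not quite work. Chaining the two inequalities there gives $\|su''\|_{X^m}=\|s(u')'\|_{X^m}\leq\|u'\|_{X^{m+1}}\leq\|u\|_{X^{m+3}}$, which loses one derivative too many. The direct inspection of \eqref{WSS} that you mention as an alternative is the correct route (writing $\partial_s^l(su'')=su^{(l+2)}+lu^{(l+1)}$ and matching each weighted term against the summands defining $\|u\|_{X^{m+2}}$), and it goes through cleanly; you should simply drop the reference to the two-step argument.
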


\begin{lemma}\label{lem:EstAu2}
Let $m$ and $j$ be integers such that $1\leq j\leq m$. 
If $a|_{s=0}$, then we have 
\[
\opnorm{ (au')'(t) }_{m,j} \lesssim
\begin{cases}
 \min\{ \opnorm{a'(t)}_1 \opnorm{u(t)}_{4,1}, \opnorm{a'(t)}_{2,1} \opnorm{u(t)}_{3,1}  \} &\mbox{for}\quad m=1, \\
 \opnorm{a'(t)}_{m,j} \opnorm{u(t)}_{m+2,j} &\mbox{for}\quad m=2,3,\ldots.
\end{cases}
\]
\end{lemma}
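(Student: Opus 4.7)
The strategy is a term-by-term reduction to Lemma \ref{lem:EstAu} via Leibniz in $t$. Writing
\[
\dt^k(au')'=\sum_{l=0}^{k}\binom{k}{l}\bigl((\dt^l a)(\dt^{k-l}u)'\bigr)',\qquad 0\le k\le j,
\]
and using that $a|_{s=0}=0$ implies $\dt^l a|_{s=0}=0$ for every $l\ge 0$, every summand is of the form covered by Lemma \ref{lem:EstAu} with $a$ replaced by $\dt^l a$ and $u$ replaced by $\dt^{k-l}u$. Squaring and summing over $k$ then reduces the whole lemma to controlling, for each pair $(k,l)$ with $0\le l\le k\le j$, an estimate of the form
\[
\|((\dt^l a)(\dt^{k-l}u)')'\|_{X^{m-k}}\lesssim \|\dt^l a'\|_{X^{m-l}}\,\|\dt^{k-l}u\|_{X^{m+2-(k-l)}},
\]
since summing these over $(k,l)$ reassembles $\opnorm{a'(t)}_{m,j}\,\opnorm{u(t)}_{m+2,j}$.

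For $m\ge 2$, this splits into two regimes. In the \emph{interior regime} $l\le m-2$ I would apply the first alternative of Lemma \ref{lem:EstAu} (with $m$ there replaced by $m-k$) to obtain
\[
\|((\dt^l a)(\dt^{k-l}u)')'\|_{X^{m-k}}\lesssim \|\dt^l a'\|_{X^{(m-k)\vee 2}}\,\|\dt^{k-l}u\|_{X^{m-k+2}},
\]
and then invoke the monotonicity of the $X^p$ norms in $p$ together with $l\le k$ and $m-l\ge 2$ to majorize this by the desired product. In the \emph{endpoint regime} $l\in\{m-1,m\}$, the $\vee 2$ in Lemma \ref{lem:EstAu} is too lossy, so I would bypass it: expand $((\dt^l a)(\dt^{k-l}u)')'=(\dt^l a)'(\dt^{k-l}u)'+\dt^l a\cdot(\dt^{k-l}u)''$, handle $(\dt^l a)'(\dt^{k-l}u)'$ with the $L^\infty$ embedding of Lemma \ref{lem:embedding2}, and absorb $\dt^l a(0)=0$ via the Hardy-type inequality $|\dt^l a(s)|\le s^{1/2}\|\dt^l a'\|_{L^2}$ into the $s^{1/2}$-weight inherent in the definition of the weighted Sobolev norm $X^{m+2-(k-l)}$ applied to $\dt^{k-l}u$.

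The case $m=1$ is essentially the same analysis restricted to $(k,l)\in\{(0,0),(1,0),(1,1)\}$, and the announced $\min\{\opnorm{a'}_1\opnorm{u}_{4,1},\opnorm{a'}_{2,1}\opnorm{u}_{3,1}\}$ is precisely the minimum of the two bounds obtained by selecting the first versus the second alternative of Lemma \ref{lem:EstAu} at the $X^1$-level and the $L^2$-level respectively; the sole nontrivial point is the $(k,l)=(1,1)$ contribution in the first alternative, where the same Hardy step as above is needed because only $\|\dt a'\|_{L^2}$ is available in $\opnorm{a'}_1$. The principal obstacle throughout is the endpoint bookkeeping at $l\in\{m-1,m\}$: a direct appeal to Lemma \ref{lem:EstAu} would introduce a spurious $\vee 2$ correction and break the claimed product structure, and one must replace the black-box algebra bound by the explicit Hardy argument, which is exactly the step where the hypothesis $a|_{s=0}=0$ is used in an essential way.
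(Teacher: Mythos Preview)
Your proposal is correct and follows the same overall scheme as the paper: Leibniz in $t$, then Lemma~\ref{lem:EstAu} on each term $\|((\dt^{k_1}a)(\dt^{k_2}u)')'\|_{X^{m-k}}$, with a case split on whether $k_1$ is close to $m$.

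The one place you diverge is at the endpoints $l\in\{m-1,m\}$. You say the $\vee 2$ in Lemma~\ref{lem:EstAu} is ``too lossy'' and propose to bypass it by expanding the product and using the Hardy-type bound $|\dt^l a(s)|\le s^{1/2}\|\dt^l a'\|_{L^2}$. This works, but it is unnecessary: Lemma~\ref{lem:EstAu} already contains the \emph{second} alternative $\|(au')'\|_{X^n}\lesssim \|a'\|_{X^n}\|u\|_{X^{n+2\vee 4}}$, in which the correction $\vee 4$ falls on $u$ rather than $\vee 2$ on $a'$. The paper simply invokes this second alternative at the endpoints (e.g.\ $\|((\dt^{m}a)u')'\|_{L^2}\lesssim \|\dt^m a'\|_{L^2}\|u\|_{X^4}$ and $\|((\dt^{m-1}a)u')'\|_{X^1}\lesssim \|\dt^{m-1}a'\|_{X^1}\|u\|_{X^4}$), and since $k_2=k-k_1\le 1$ there, the $X^4$ on $u$ is absorbed by $\opnorm{u}_{m+2,j}$. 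The same remark applies to your $m=1$ discussion: the $(k,l)=(1,1)$ term $\|((\dt a)u')'\|_{L^2}$ is handled directly by $\|\dt a'\|_{L^2}\|u\|_{X^4}$ from Lemma~\ref{lem:EstAu}, so no separate Hardy step is needed. In short, your Hardy argument reproduces by hand exactly the estimate that the second branch of the $\min$ in Lemma~\ref{lem:EstAu} already provides.
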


\begin{proof}
By Lemma \ref{lem:EstAu}, we see that 
\begin{align*}
\opnorm{ (au')' }_1
&\leq \|(au')'\|_{X^1} + \|(a\dot{u}')'\|_{L^2} + \|(\dot{a}u')'\|_{L^2} \\
&\lesssim \|a'\|_{X^1}\|u\|_{X^4} + \|a'\|_{X^1}\|\dot{u}\|_{X^3} + \|\dot{a}'\|_{L^2}\|u\|_{X^4} \\
&\lesssim \opnorm{a'}_1 \opnorm{u}_{4,1}
\end{align*}
and that 
\begin{align*}
\opnorm{ (au')' }_1
&\lesssim \|a'\|_{X^2}\|u\|_{X^3} + \|a'\|_{X^2}\|\dot{u}\|_{X^2} + \|\dot{a}'\|_{X^1}\|u\|_{X^3} \\
&\lesssim \opnorm{a'}_{2,1} \opnorm{u}_{3,1}.
\end{align*}
These imply the first estimate of the lemma. 
We then consider the case $m\geq2$.
\[
\opnorm{ (au')' }_{m,j} \lesssim \sum_{0\leq k\leq j}\sum_{k_1+k_2=k}\|((\dt^{k_1}a)(\dt^{k_2}u)')'\|_{X^{m-k}}.
\]
We evaluate $I(k_1,k_2;k)=\|((\dt^{k_1}a)(\dt^{k_2}u)')'\|_{X^{m-k}}$, where $0\leq k\leq j$ and $k_1+k_2=k$. 
In the following calculations, we use Lemma \ref{lem:EstAu}. 
\begin{enumerate}
\item[(i)]
The case $k\leq m-2$. 
\begin{align*}
I(k_1,k_2;k)
&\lesssim \|\dt^{k_1}a'\|_{X^{m-k}} \|\dt^{k_2}u\|_{X^{m+2-k}} \\
&\lesssim \opnorm{a'}_{m,k} \opnorm{u}_{m+2,k}.
\end{align*}
\item[(ii)]
The case $j=m-1$. 
\begin{align*}
I(k_1,k_2;k)
&\lesssim 
 \begin{cases}
  \|\dt^{j}a'\|_{X^1} \|u\|_{X^4} &\mbox{for}\quad k_1=j, \\
  \|\dt^{k_1}a'\|_{X^2} \|\dt^{k_2}u\|_{X^3} &\mbox{for}\quad k_1\leq j-1
 \end{cases} \\
&\lesssim \opnorm{a'}_{m,j} \opnorm{u}_{m+2,j}.
\end{align*}
\item[(iii)]
The case $j=m$. 
\begin{align*}
I(k_1,k_2;k)
&\lesssim 
 \begin{cases}
  \|\dt^ja'\|_{L^2} \|u\|_{X^4} &\mbox{for}\quad k_1=j, \\
  \|\dt^{j-1}a'\|_{X^1} \|\dt u\|_{X^3} &\mbox{for}\quad k_1=j-1 \\
  \|\dt^{k_1}a'\|_{X^2} \|\dt^{k_2}u\|_{X^2} &\mbox{for}\quad k_1\leq j-2
 \end{cases} \\
&\lesssim \opnorm{a'}_{m,j} \opnorm{u}_{m+2,j}.
\end{align*}
\end{enumerate}
These imply the second estimate of the lemma. 
\end{proof}

As in Iguchi and Takayama \cite{IguchiTakayama2023}, we will use an averaging operator $\mathscr{M}$ defined by 
\begin{equation}\label{defM}
(\mathscr{M}u)(s) = \frac{1}{s}\int_0^su(\sigma) \mathrm{d}\sigma. 
\end{equation}

\begin{lemma}[{\cite[Corollary 4.13]{IguchiTakayama2023}}]\label{lem:WEM1}
Let $j$ be non-negative integer, $1\leq p\leq \infty$, and $\beta<j+1-\frac1p$. 
Then, we have 
\[
\|s^\beta\ds^j(\mathscr{M}u)\|_{L^p} \leq \frac{1}{j+1-\beta-\frac1p}\|s^\beta\ds^j u\|_{L^p}.
\]
Particularly, $\|\mathscr{M}u\|_{X^m} \leq 2\|u\|_{X^m}$ for $m=0,1,2,\ldots$. 
\end{lemma}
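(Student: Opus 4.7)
The plan is to derive a pointwise integral representation of $\ds^j(\mathscr{M}u)$ by a scaling substitution, apply Minkowski's integral inequality in $L^p$, and then specialize to the norms defining $X^m$. Substituting $\sigma = ts$ in \eqref{defM} rewrites the operator as $(\mathscr{M}u)(s) = \int_0^1 u(ts)\,\mathrm{d}t$, and differentiating under the integral sign (justified by approximating $u$ by smooth functions if needed) gives
\[
\ds^j(\mathscr{M}u)(s) = \int_0^1 t^j (\ds^j u)(ts)\,\mathrm{d}t.
\]
Multiplying by $s^\beta$ and applying Minkowski's integral inequality in $L^p(\mathrm{d}s;0,1)$ yields
\[
\|s^\beta \ds^j(\mathscr{M}u)\|_{L^p} \leq \int_0^1 t^j \bigl\|s^\beta (\ds^j u)(t\,\cdot)\bigr\|_{L^p(\mathrm{d}s;0,1)}\,\mathrm{d}t.
\]

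Next I would rescale the inner $L^p$ norm via $\sigma = ts$, which produces $\|s^\beta (\ds^j u)(t\,\cdot)\|_{L^p(0,1)} = t^{-\beta - 1/p}\|\sigma^\beta \ds^j u\|_{L^p(0,t)} \leq t^{-\beta - 1/p}\|s^\beta\ds^j u\|_{L^p(0,1)}$, where the convention $1/\infty = 0$ covers the case $p = \infty$. Substituting this bound inside the outer integral reduces the estimate to evaluating $\int_0^1 t^{j - \beta - 1/p}\,\mathrm{d}t$. This integral converges precisely under the hypothesis $\beta < j + 1 - 1/p$ and equals $(j + 1 - \beta - 1/p)^{-1}$, which gives exactly the claimed inequality.

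For the concluding bound $\|\mathscr{M}u\|_{X^m} \leq 2\|u\|_{X^m}$, I would apply the main inequality termwise to each summand appearing in \eqref{WSS}. For $m = 2k$, the unweighted pieces $\|\ds^i \mathscr{M}u\|_{L^2}$ with $0 \leq i \leq k$ correspond to $\beta = 0$, $p = 2$ and produce the constant $(i + 1/2)^{-1}$, which is maximized at $i = 0$ with value $2$; the weighted pieces $\|s^j \ds^{k+j}\mathscr{M}u\|_{L^2}$ with $1 \leq j \leq k$ correspond to $\beta = j$, $p = 2$ and give constant $(k + 1/2)^{-1} \leq 2$. The case $m = 2k+1$ is analogous, with the half-integer weight terms yielding the constant $(k+1)^{-1} \leq 1$. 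Squaring every estimate and summing therefore gives $\|\mathscr{M}u\|_{X^m}^2 \leq 4\|u\|_{X^m}^2$.

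The argument is a one-variable Hardy-type estimate and presents no genuine obstacle; the only subtle point worth flagging is that the bound on $\|\mathscr{M}u\|_{L^2}$ (the $i = 0$ summand in the even-parity case) saturates the constant at exactly $2$, so the universal factor $2$ in the second assertion of the lemma cannot be improved within this framework.
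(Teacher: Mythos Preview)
Your argument is correct. The scaling representation $(\mathscr{M}u)(s)=\int_0^1 u(ts)\,\mathrm{d}t$ followed by Minkowski's integral inequality is the standard route to this Hardy-type bound, and your bookkeeping of the constants in the termwise application to the $X^m$ norm is accurate.

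Note that the paper itself does not supply a proof: the lemma is quoted as \cite[Corollary 4.13]{IguchiTakayama2023} and stated without argument, so there is no in-paper proof to compare against. Your write-up is a clean self-contained verification of the cited result.
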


\begin{lemma}\label{lem:Comm}
Let $m$ be a positive integer and assume that $a|_{s=0}=0$. 
Then, we have 
 \setlength{\parskip}{0mm}
\begin{enumerate}
 \setlength{\itemsep}{-0.5mm}
\item[{\rm (i)}]
$\|\dt([\dt^m,a]u')'\|_{L^2} \lesssim \bigl( \|\dt a'\|_{L^\infty}+\|\dt^2a'\|_{X^1}+\sum_{j=3}^{m+1}\|\dt^ja'\|_{L^2} \bigr)\opnorm{u}_{m+2,m}$. 
\item[{\rm (ii)}]
$\|([\dt^m,a]u')'\|_{X^1} \lesssim \|a'\|_{m+1,*}(\opnorm{u}_{m+2,m-1} + \|u'\|_{X^2})$. 
\item[{\rm (iii)}]
$\|\dt[\dt^m,q]u'(1,t)\|_{L^2} \lesssim \bigl( \sum_{j=1}^{m+1}\|\dt^jq\|_{L^2} \bigr)\opnorm{u}_{m+2,m}$. 
\end{enumerate}
\end{lemma}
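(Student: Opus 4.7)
The plan is to expand each commutator by the Leibniz formula and bound the resulting products of $t$- and $s$-derivatives using the calculus inequalities of Lemmas \ref{lem:embedding}--\ref{lem:EstAu}. A recurring device is that $a|_{s=0}=0$ lets us write $\dt^j a(s,t)=s\,\mathscr{M}(\dt^j a')(s,t)$ and, more elementarily, $|\dt^j a(s,t)|\le s^{1/2}\|\dt^j a'(\cdot,t)\|_{L^2}$, so any factor $\dt^j a$ may be traded for an extra $s$- or $s^{1/2}$-weight plus something controlled by $\dt^j a'$ via Lemma \ref{lem:WEM1}.

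Part (iii) is the simplest. Writing $[\dt^m,q]u'(1,t)=\sum_{k=1}^m\binom{m}{k}(\dt^k q(s,t))(\dt^{m-k}u'(1,t))$, differentiating once more in $t$, and taking the spatial $L^2$-norm, each summand splits as a function of $(s,t)$ times a scalar boundary value, so the estimate reduces to a sum of $\|\dt^j q\|_{L^2}|\dt^l u'(1,t)|$ with $j+l=m+1$, $1\le j\le m+1$, $l\le m$. The elementary one-dimensional trace bound $|v(1)|\lesssim\|v\|_{X^1}$ (obtained by integrating the identity $v(1)^2-v(s)^2=2\int_s^1 v\,v'\,d\sigma$ over $s\in(0,1)$) together with Lemma \ref{lem:embedding2} yields $|\dt^l u'(1,t)|\lesssim\|\dt^l u\|_{X^3}\le\opnorm{u(t)}_{m+2,m}$ for every $l\le m$, establishing (iii).

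For (i), expand $\dt([\dt^m,a]u')'$ as a sum of products $(\dt^j a')(\dt^l u')$ and $(\dt^j a)(\dt^l u'')$ with $j+l=m+1$, $j\ge 1$, and estimate in $L^2$ by splitting according to $j$. For $j=1$, pair $\|\dt a'\|_{L^\infty}$ with $\|\dt^m u'\|_{L^2}\le\|\dt^m u\|_{X^2}$, and use $|\dt a|\le s\|\dt a'\|_{L^\infty}$ together with $\|s\,\dt^m u''\|_{L^2}\le\|\dt^m u\|_{X^2}$ for the $(\dt a)(\dt^m u'')$ summand. For $j=2$, invoke the algebra inequality $\|fg\|_{L^2}\lesssim\|f\|_{X^1}\|g\|_{X^1}$ of Lemma \ref{lem:Algebra} to pair $\|\dt^2 a'\|_{X^1}$ with $\|\dt^{m-1}u'\|_{X^1}\le\|\dt^{m-1}u\|_{X^3}$, and handle the $(\dt^2 a)(\dt^{m-1}u'')$ term through $|\dt^2 a|\le s^{1/2}\|\dt^2 a'\|_{L^2}$ combined with $\|s^{1/2}\dt^{m-1}u''\|_{L^2}\le\|\dt^{m-1}u\|_{X^3}$. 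For $j\ge 3$, the constraint forces $l\le m-2$, so Lemma \ref{lem:embedding2} yields $\|\dt^l u'\|_{L^\infty}\lesssim\|\dt^l u\|_{X^4}\le\opnorm{u}_{m+2,m}$, paired with $\|\dt^j a'\|_{L^2}$. Summation closes (i).

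For (ii), each summand of $([\dt^m,a]u')'$ is of the form $((\dt^k a)(\dt^{m-k}u)')'$ with $\dt^k a|_{s=0}=0$, so Lemma \ref{lem:EstAu} applies. For $1\le k\le m-1$ its first alternative gives $\|((\dt^k a)(\dt^{m-k}u)')'\|_{X^1}\lesssim\|\dt^k a'\|_{X^2}\|\dt^{m-k}u\|_{X^3}\le\opnorm{a'}_{m+1,*}\opnorm{u}_{m+2,m-1}$, using $m+1-k\ge 2$ and $m+2-(m-k)=k+2\ge 3$. The delicate case is $k=m$: since $\|\dt^m a'\|_{X^2}$ is unavailable, we expand $((\dt^m a)u')'=\dt^m a'\,u'+\dt^m a\,u''$ and bound each piece directly from $\|w\|_{X^1}^2=\|w\|_{L^2}^2+\|s^{1/2}w'\|_{L^2}^2$, using Lemma \ref{lem:Algebra} for the $\dt^m a'\,u'$ piece and the pointwise bound $|\dt^m a|\le s^{1/2}\|\dt^m a'\|_{L^2}$ combined with weighted $L^2$-estimates (verified directly from the definitions, e.g.\ $\|s u'''\|_{L^2}\le\|u'\|_{X^2}$ and $\|s^{1/2}u''\|_{X^1}\lesssim\|u'\|_{X^2}$) for the $\dt^m a\,u''$ piece. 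This yields $\|((\dt^m a)u')'\|_{X^1}\lesssim\|a'\|_{m+1,*}\|u'\|_{X^2}$, and summation over $k$ gives (ii). The main technical obstacle is precisely this $k=m$ case: the low spatial regularity of $\dt^m a'$ (only $X^1$) forces pairing with an $X^2$-factor, and since $\opnorm{u}_{m+2,m-1}$ does not in general supply $X^2$-control of $u'$ (particularly when $m=1$, where it provides only $\|u\|_{X^3}$), the auxiliary $\|u'\|_{X^2}$-term has to be introduced explicitly and exploited through the vanishing of $\dt^m a$ at $s=0$ via pointwise weighted bounds rather than the algebra inequality alone.
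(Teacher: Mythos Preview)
Your proof is correct and follows essentially the same approach as the paper's: expand the commutator by Leibniz and bound each summand using the calculus inequalities of Section~\ref{sect:pre}, the only difference being that the paper invokes Lemma~\ref{lem:EstAu} directly on each term $((\dt^j a)(\dt^l u)')'$ rather than further expanding the product into $(\dt^j a')(\dt^l u')+(\dt^j a)(\dt^l u'')$ as you do. One small omission: in (i) for $j\ge 3$ you treat only the $(\dt^j a')(\dt^l u')$ contribution, but the companion $(\dt^j a)(\dt^l u'')$ term follows by the same device via $|\dt^j a|\le s^{1/2}\|\dt^j a'\|_{L^2}$ and $\|s^{1/2}\dt^l u''\|_{L^\infty}\lesssim\|\dt^l u\|_{X^4}$ (Lemma~\ref{lem:CalIneqLp1}).
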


\begin{proof}
By Lemma \ref{lem:EstAu}, we see that 
\begin{align*}
&\|\dt([\dt^m,a]u')'\|_{L^2} \\
&\lesssim \|((\dt a)(\dt^m u)')'\|_{L^2} + \|((\dt^2 a)(\dt^{m-1}u)')'\|_{L^2} + \sum_{j=3}^{m+1}\|((\dt^ja)(\dt^{m+1-j}u)')'\|_{L^2} \\
&\lesssim \|\dt a'\|_{L^\infty}\|\dt^m u\|_{X^2} + \|\dt^2 a'\|_{X^1}\|\dt^{m-1} u\|_{X^3} + \sum_{j=3}^{m+1}\|\dt^ja'\|_{L^2} \|\dt^{m+1-j}u\|_{X^4},
\end{align*}
which implies (i). 

Similarly, we have 
\begin{align*}
\|([\dt^m,a]u')'\|_{X^1} 
&\lesssim \sum_{j=1}^{m-1}\|((\dt^ja)(\dt^{m-j}u)')'\|_{X^1} + \|((\dt^ma)u')'\|_{X^1}.
\end{align*}
Here, for $1\leq j\leq m-1$ we see that 
\begin{align*}
\|((\dt^ja)(\dt^{m-j}u)')'\|_{X^1}
&\lesssim \|\dt^ja'\|_{X^2}\|\dt^{m-j}u\|_{X^3} 
 \lesssim \opnorm{a'}_{m+1,*} \opnorm{u}_{m+2,m-1}.
\end{align*}
As for the second term, in the case $m\geq2$ we have, 
\begin{align*}
\|((\dt^ma)u')'\|_{X^1} 
&\lesssim \|\dt^ma'\|_{X^1}\|u\|_{X^4} 
 \lesssim \opnorm{a'}_{m+1,*} \opnorm{u}_{m+2,m-1},
\end{align*}
while in the case $m=1$ we evaluate it as 
\begin{align*}
\|((\dt a)u')'\|_{X^1}
&\leq \|s^\frac12((\dt a)u')''\|_{L^2} + \|((\dt a)u')'\|_{L^2} \\
&\leq \|s^{-\frac12}\dt a\|_{L^\infty}\|su'''\|_{L^2} + (\|s^\frac12\dt a'\|_{L^\infty} + \|\dt a\|_{L^\infty})\|u''\|_{L^2} \\
&\quad\;
 + \|s^\frac12\dt a''\|_{L^2}\|u'\|_{L^\infty} + \|\dt a'\|_{L^2}\|u'\|_{L^\infty}.
\end{align*}
Since $\dt a|_{s=0}=0$, we have $\dt a=s\mathscr{M}(\dt a')$ with the averaging operator $\mathscr{M}$ defined by \eqref{defM}. 
Therefore, by Lemmas \ref{lem:WEM1} and \ref{lem:embedding} we obtain 
$\|s^{-\frac12}\dt a\|_{L^\infty} = \|s^\frac12\mathscr{M}(\dt a')\|_{L^\infty} \leq 2\|s^\frac12\dt a'\|_{L^\infty} \lesssim \|\dt a'\|_{X^1}$, 
so that 
\begin{equation}\label{AuxEst1}
\|((\dt a)u')'\|_{X^1} \lesssim \|\dt a'\|_{X^1}\|u'\|_{X^2}. 
\end{equation}
These estimates imply (ii). 

In view of $|\dt^ju'(1,t)| \lesssim \|\dt^ju\|_{X^2} \leq \opnorm{u}_{m+2,m}$ for $0\leq j\leq m$, 
the proof of (iii) is straightforward. 
\end{proof}

\begin{lemma}[{\cite[Lemma 4.10]{IguchiTakayama2023}}]\label{lem:CalIneq1}
It holds that 
\[
\|u'v'\|_{Y^m} \lesssim
\begin{cases}
 \|u\|_{X^{m+2}} \|v\|_{X^{m+2}} &\mbox{for}\quad m=0,1, \\
 \|u\|_{X^{m+1\vee4}} \|v\|_{X^{m+1}} &\mbox{for}\quad m=0,1,2,\ldots.
\end{cases}
\]
\end{lemma}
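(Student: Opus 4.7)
The plan is to expand $\ds^n(u'v')$ by the Leibniz rule and control each summand $(\ds^{i+1}u)(\ds^{n-i+1}v)$ by a weighted H\"older inequality. Unpacking $\|\cdot\|_{Y^m}$, the quantities to estimate are $\|\ds^\ell(u'v')\|_{L^2}$ for $0\leq \ell \leq k$ together with the weighted terms $\|s^\beta\ds^n(u'v')\|_{L^2}$ for $n$ up to $m$, where $\beta$ equals $j$ or $j-\tfrac12$ according to the parity of $m$ and where $n=k+j$. The low-order $L^2$ products will follow directly from the algebra property in Lemma \ref{lem:Algebra}.

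For the symmetric bound $\|u\|_{X^{m+2}}\|v\|_{X^{m+2}}$ available when $m=0,1$, I would split the weight $s^\beta = s^{\beta/2}\cdot s^{\beta/2}$ and pair the two factors in $L^4\cdot L^4$, applying the $p=4$ case of Lemma \ref{lem:CalIneqLp1} to each side; for instance, $\|s^{1/2}u'v'\|_{L^2}\leq \|s^{1/4}u'\|_{L^4}\|s^{1/4}v'\|_{L^4}\lesssim \|u\|_{X^2}\|v\|_{X^2}$ already handles the $m=0$ case. For the asymmetric bound $\|u\|_{X^{m+1\vee 4}}\|v\|_{X^{m+1}}$, I would instead use an $L^\infty\cdot L^2$ split that puts the full weight on the $v$-factor and the pointwise estimate on $u$: for each summand I would bound $\ds^{i+1}u$ (suitably weighted) in $L^\infty$ via the $p=\infty$ case of Lemma \ref{lem:CalIneqLp1}, or via Lemma \ref{lem:embedding2} when $i+1$ exceeds the admissible range of Lemma \ref{lem:CalIneqLp1}, while $s^\beta\ds^{n-i+1}v$ is controlled in $L^2$ directly from the definition of $X^{m+1}$. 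The dichotomy $m+1\vee 4$ in the exponent arises precisely because Lemma \ref{lem:embedding2} demands $X^4$ in order to put $u'$ in $L^\infty$, which is needed when $m$ is small.

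The main obstacle is not conceptual but one of bookkeeping: for every summand $(\ds^{i+1}u)(\ds^{n-i+1}v)$ one must choose the H\"older pair and then split the weight $s^\beta = s^{\beta_1}s^{\beta_2}$ so that both $\beta_1$ and $\beta_2$ match the exponents $j-\tfrac1p$ or $j-\tfrac12-\tfrac1p$ offered by Lemma \ref{lem:CalIneqLp1} at the correct derivative orders. A finite case analysis on the parity of $m$ and on whether the derivative indices of the two factors lie inside the admissible range $1\leq j\leq k$ of Lemma \ref{lem:CalIneqLp1} is therefore unavoidable; the trivial inequality $s\leq 1$ provides the needed slack whenever the available weight exponent exceeds the required one, and an induction or repeated application of the shift inequality $\|u'\|_{X^\ell}\leq \|u\|_{X^{\ell+2}}$ from Lemma \ref{lem:embedding2} keeps the argument uniform as $m$ grows.
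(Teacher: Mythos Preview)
The paper does not give its own proof of this lemma: it is stated with a citation to \cite[Lemma 4.10]{IguchiTakayama2023} and used without argument. There is therefore no proof in the present paper to compare your proposal against.

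That said, your strategy is the natural one and is almost certainly what the cited reference does: expand by Leibniz, split each factor via H\"older, and feed the pieces into Lemma~\ref{lem:CalIneqLp1} (for the $L^p$ weighted bounds with $2\le p\le\infty$) and Lemma~\ref{lem:embedding2} (for the endpoint $L^\infty$ control that forces the $X^4$ requirement). The case analysis you describe---parity of $m$, whether the derivative index falls in the admissible range of Lemma~\ref{lem:CalIneqLp1}, and the use of $s\le 1$ for slack---is exactly the bookkeeping such proofs require. Your proposal is a correct outline; to make it a full proof you would need to actually carry out the finite case check, but there is no missing idea.
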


\begin{lemma}\label{lem:CalIneqY1}
For a positive integer $m$, we have 
\[
\opnorm{(u'v')(t)}_m^\dag \lesssim 
\begin{cases}
 \opnorm{u(t)}_{3,1} \opnorm{v(t)}_{3,1} &\mbox{for}\quad m=1, \\
 \opnorm{u(t)}_{m+1\vee4,m} \opnorm{v(t)}_{m+1,m} &\mbox{for}\quad m\geq1.
\end{cases}
\]
\end{lemma}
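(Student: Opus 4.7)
The plan is to reduce the estimate, via the Leibniz rule, to a termwise application of Lemma~\ref{lem:CalIneq1}. Unfolding the definition gives $\opnorm{(u'v')(t)}_m^\dag = \sum_{j=0}^m\|\dt^j(u'v')(t)\|_{Y^{m-j}}$, and the Leibniz rule gives $\dt^j(u'v') = \sum_{k=0}^j\binom{j}{k}(\dt^k u)'(\dt^{j-k}v)'$, so it suffices to bound each summand $\|(\dt^k u)'(\dt^{j-k}v)'\|_{Y^{m-j}}$ with $0\leq k\leq j\leq m$ by a product of the announced norms of $u$ and $v$.

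For the symmetric $m=1$ bound, I would apply the first case of Lemma~\ref{lem:CalIneq1} to each of the summands $u'v'$, $u'\dot v'$, $\dot u' v'$ (lying in $Y^1, Y^0, Y^0$ respectively), yielding $\|u\|_{X^3}\|v\|_{X^3}$, $\|u\|_{X^2}\|\dot v\|_{X^2}$, $\|\dot u\|_{X^2}\|v\|_{X^2}$, each dominated by $\opnorm{u}_{3,1}\opnorm{v}_{3,1}$.

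For the general bound $\opnorm{u}_{m+1\vee 4,m}\opnorm{v}_{m+1,m}$ I plan to split the Leibniz indices $(j,k)$ into three regimes according to whether $k$ or $j-k$ is $\leq(m-3)\vee 0$. In the first two regimes I will invoke the second case of Lemma~\ref{lem:CalIneq1}, placing the $\vee 4$ slot on the factor whose time-derivative order is small; one then checks by a routine case split (whether $m-j+1\geq 4$ or not) that the resulting exponents fit inside the budgets $m+1\vee 4$ and $m+1$, since $k\leq j\leq m$. The residual regime — in which both $k$ and $j-k$ strictly exceed $(m-3)\vee 0$ — forces $j\geq 2((m-3)\vee 0)+2$, which combined with $j\leq m$ restricts to $m-j\in\{0,1\}$ and to a short finite list of index pairs; on each of these I will use the first (symmetric) case of Lemma~\ref{lem:CalIneq1}, giving $\|\dt^k u\|_{X^{m-j+2}}\|\dt^{j-k}v\|_{X^{m-j+2}}$, and verify the budgets by direct enumeration.

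The main obstacle is the $\vee 4$ appearing in the exponent on the $u$-side of Lemma~\ref{lem:CalIneq1}. This prevents any naive uniform assignment from closing, because for small $m$ the $\vee 4$ can exceed $m+1$ and eats into the derivative budget on whichever side carries it. The three-way split above is designed precisely to manage this: in the first two regimes the freedom to place the $\vee 4$ on either side of the product saves one case; in the residual regime $m-j\leq 1$ is automatically forced, so the symmetric first case of Lemma~\ref{lem:CalIneq1} — which is available only when $m-j\in\{0,1\}$ — closes the remaining index pairs.
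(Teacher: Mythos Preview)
Your plan is essentially the paper's: reduce via the Leibniz rule to a termwise application of Lemma~\ref{lem:CalIneq1}. The symmetric $m=1$ case is handled identically, and for $m\geq 3$ your three-regime split works cleanly and is just a slightly more uniform repackaging of the paper's case analysis (which splits first on $j$ into $j\leq m-3$, $j=m-2$, $j\in\{m-1,m\}$ and then on the Leibniz indices).

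There is, however, a small slip in regime~2 when $m\in\{1,2\}$. There $(m-3)\vee 0=0$, so regime~2 forces $j-k=0$; placing the $\vee 4$ on the $v$-factor then requires $(j-k)+4\leq m+1$, i.e.\ $4\leq m+1$, which fails. Concretely, the pairs $(j,k)=(1,1)$ for $m=1$ and $(j,k)\in\{(1,1),(2,2)\}$ for $m=2$ lie only in regime~2 and do not close via the second case of Lemma~\ref{lem:CalIneq1}. The fix is immediate and is exactly what the paper does by treating $m=1,2$ by direct enumeration: these pairs all have $m-j\leq 1$, so the \emph{first} (symmetric) case of Lemma~\ref{lem:CalIneq1} is available and gives $\|(\dt^j u)'v'\|_{Y^{m-j}}\lesssim\|\dt^j u\|_{X^{m-j+2}}\|v\|_{X^{m-j+2}}$, which fits the budgets since $j+(m-j+2)=m+2\leq 4=m+1\vee 4$ and $0+(m-j+2)\leq m+1$ (using $j\geq 1$). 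So your ``routine case split'' claim is not quite routine for $m\leq 2$, but the repair is trivial.
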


\begin{proof}
By Lemma \ref{lem:CalIneq1}, we see that 
\begin{align*}
\opnorm{(u'v')(t)}_1^\dag
&\leq \|u'v'\|_{Y^1} + \|(\dt u)'v'\|_{Y^0} + \|u'(\dt v)'\|_{Y^0} \\
&\lesssim \|u\|_{X^3}\|v\|_{X^3} + \|\dt u\|_{X^2}\|v\|_{X^2} + \|u\|_{X^2}\|\dt v\|_{X^2} \\
&\lesssim \opnorm{u}_{3,1} \opnorm{v}_{3,1}
\end{align*}
and that 
\begin{align*}
\opnorm{(u'v')(t)}_1^\dag
&\lesssim \|u\|_{X^4}\|v\|_{X^2} + \|\dt u\|_{X^2}\|v\|_{X^2} + \|u\|_{X^4}\|\dt v\|_{X^1} \\
&\lesssim \opnorm{u}_{4,1} \opnorm{v}_{2,1}.
\end{align*}
These imply the desired estimates in the case $m=1$. 
Similarly, we see that 
\begin{align*}
\opnorm{(u'v')(t)}_2^\dag
&\leq \|u'v'\|_{Y^2} + \|(\dt u)'v'\|_{Y^1} + \|u'(\dt v)'\|_{Y^1} \\
&\quad\;
 + \|(\dt^2u)'v'\|_{Y^0} + 2\|(\dt u)'(\dt v)'\|_{Y^0} + \|u'(\dt^2v)'\|_{Y^0} \\
&\lesssim \|u\|_{X^4}\|v\|_{X^3} + \|\dt u\|_{X^3}\|v\|_{X^3} + \|u\|_{X^4}\|\dt v\|_{X^2} \\
&\quad\;
 + \|\dt^2u\|_{X^2}\|v\|_{X^2} + \|\dt u\|_{X^2}\|\dt v\|_{X^2} + \|u\|_{X^4}\|\dt^2v\|_{X^1} \\
&\lesssim \opnorm{u}_{4,2} \opnorm{v}_{3,2},
\end{align*}
which implies the desired estimate in the case $m=2$. 
We then consider the case $m\geq3$. 
\[
\opnorm{(u'v')(t)}_m^\dag \lesssim \sum_{0\leq j\leq m}\sum_{j_1+j_2=j}\|(\dt^{j_1}u)'(\dt^{j_2}v)'\|_{Y^{m-j}}.
\]
We evaluate $I(j_1,j_2;j)=\|(\dt^{j_1}u)'(\dt^{j_2}v)'\|_{Y^{m-j}}$, where $0\leq j\leq m$ and $j_1+j_2=j$. 
In the following calculations, we use Lemma \ref{lem:CalIneq1}. 
\begin{enumerate}
\item[(i)]
The case $j\leq m-3$. 
\begin{align*}
I(j_1,j_2;j) 
&\lesssim \|\dt^{j_1}u\|_{X^{m+1-j}} \|\dt^{j_2}v\|_{X^{m+1-j}} \\
&\lesssim \opnorm{u}_{m+1,m} \opnorm{v}_{m+1,m}.
\end{align*}
\item[(ii)]
The case $j=m-2$. 
\begin{align*}
I(j_1,j_2;j)
&\lesssim 
\begin{cases}
 \|\dt^{m-2}u\|_{X^3} \|v\|_{X^4} &\mbox{for}\quad j_1=j, \\
 \|\dt^{j_1}u\|_{X^4} \|\dt^{j_2}v\|_{X^3} &\mbox{for}\quad j_1\leq j-1
\end{cases} \\
&\lesssim \opnorm{u}_{m+1,m} \opnorm{v}_{m+1,m}.
\end{align*}
\item[(iii)]
The case $j=m-1,m$. 
\begin{align*}
I(j_1,j_2;j)
&\lesssim 
\begin{cases}
 \|\dt^j u\|_{X^{m+1-j}} \|v\|_{X^4} &\mbox{for}\quad j_1=j, \\
 \|u\|_{X^4} \|\dt^j v\|_{X^{m+1-j}} &\mbox{for}\quad j_2=j, \\
 \|\dt^{j_1}u\|_{X^{m+2-j}} \|\dt^{j_2}v\|_{X^{m+2-j}} &\mbox{for}\quad j_1,j_2\leq j-1
\end{cases} \\
&\lesssim \opnorm{u}_{m+1,m} \opnorm{v}_{m+1,m}.
\end{align*}
\end{enumerate}
These imply the desired estimate in the case $m\geq3$. 
\end{proof}

\begin{lemma}[{\cite[Lemma 4.11]{IguchiTakayama2023}}]\label{lem:CalIneq2}
If $\tau|_{s=0}=0$, then we have 
\[
\|\tau u''v''\|_{Y^m} \lesssim
\begin{cases}
 \|\tau'\|_{L^2} \|u\|_{X^4} \|v\|_{X^3} &\mbox{for}\quad m=0, \\
 \|\tau'\|_{L^\infty} \min\{ \|u\|_{X^4} \|v\|_{X^2}, \|u\|_{X^3} \|v\|_{X^3} \} &\mbox{for}\quad m=0, \\
 \min\{ \|\tau'\|_{L^2} \|u\|_{X^4},\|\tau'\|_{L^\infty} \|u\|_{X^3}\} \|v\|_{X^4} &\mbox{for}\quad m=1, \\
 \|\tau'\|_{L^\infty \cap X^{m-1}} \|u\|_{X^{m+2}} \|v\|_{X^{m+2}} &\mbox{for}\quad m\geq2.
\end{cases}
\]
\end{lemma}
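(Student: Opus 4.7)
The plan is to split on the index $m$ and exploit, in every case, the hypothesis $\tau|_{s=0}=0$: by the fundamental theorem of calculus and the Cauchy-Schwarz inequality one has the pointwise bounds
\[
|\tau(s)|\leq s\|\tau'\|_{L^\infty},\qquad |\tau(s)|\leq s^{1/2}\|\tau'\|_{L^2},
\]
so absorbing $\tau$ produces an extra factor $s$ or $s^{1/2}$ that will be compensated by the weight appearing in $\|\cdot\|_{Y^m}$. Beyond this the tools are the Leibniz rule, Hölder's inequality with well-chosen $s$-weights, and the weighted embeddings of Lemmas \ref{lem:embedding}--\ref{lem:CalIneqLp1}, in particular $\|s^{1/2}u''\|_{L^\infty}\lesssim\|u\|_{X^4}$ and $\|s u''\|_{L^\infty}\lesssim\|u\|_{X^3}$, together with the weighted $L^2$ controls on $\ds^{k+j}v$ built into the definition of $\|\cdot\|_{X^{m+2}}$.

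For $m=0$ the norm is $\|w\|_{Y^0}=\|s^{1/2}w\|_{L^2}$, so the three claimed bounds follow by inserting the appropriate pointwise control of $\tau$, distributing the remaining factors between $L^\infty$ and $L^2$ with weights $s^{1/2}$ or $s$, and applying the embeddings above. For $m=1$, since $\|w\|_{Y^1}^2=\|w\|_{L^2}^2+\|s w'\|_{L^2}^2$, I would expand $(\tau u''v'')'$ by Leibniz and estimate each of the four resulting terms $\tau u''v''$, $s\tau' u''v''$, $s\tau u'''v''$, $s\tau u''v'''$ in $L^2$. The rule for distribution is always to place $v$ at its highest derivative in weighted $L^2$ (yielding the ubiquitous factor $\|v\|_{X^4}$), while the choice between the $\|\tau'\|_{L^2}$ and $\|\tau'\|_{L^\infty}$ branch dictates whether $u$ is needed in $X^4$ or only in $X^3$.

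For $m\geq 2$ I would argue directly from the definition of $\|\cdot\|_{Y^m}$. Each contributing term is of the form $\|s^\alpha\ds^\ell(\tau u''v'')\|_{L^2}$ for $(\alpha,\ell)$ read off from the defining sum of $Y^m$, and Leibniz expands it into products $\ds^{\ell_1}\tau\cdot\ds^{\ell_2+2}u\cdot\ds^{\ell_3+2}v$ with $\ell_1+\ell_2+\ell_3=\ell$. For each triple I would place the single factor carrying the largest total derivative in a weighted $L^2$ space (controlled by the corresponding seminorm in $\|\cdot\|_{X^{m+2}}$) and the other two in weighted $L^\infty$ or $L^4$ via Lemma \ref{lem:CalIneqLp1}; when $\ell_1=0$ one still absorbs the extra $s$ from $\tau$ itself, while the intermediate derivatives of $\tau'$ are controlled by $\|\tau'\|_{L^\infty\cap X^{m-1}}$ combined with the algebra inequality of Lemma \ref{lem:Algebra}.

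The main obstacle I anticipate is combinatorial rather than analytic: in each case one must verify that every Leibniz piece admits at least one Hölder distribution of weights in which no factor is forced into an $L^\infty$ norm at an order at which only $L^4$ control is available. This is particularly delicate for $m=1$ and in a handful of borderline terms for $m\geq 2$, where one has to resort to the $L^4$ version of Lemma \ref{lem:CalIneqLp1} and distribute the weight symmetrically between the two factors carrying top-order derivatives of $u$ and $v$, rather than placing one of them in $L^\infty$.
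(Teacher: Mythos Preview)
The paper does not actually prove this lemma: it is quoted verbatim from \cite[Lemma 4.11]{IguchiTakayama2023} and stated without proof, so there is no argument in the present paper to compare your proposal against. Your sketch is nonetheless the natural one and is consistent with how the surrounding lemmas (e.g.\ Lemmas \ref{lem:CalIneqY2} and \ref{lem:CalIneqY3}) are handled here: absorb $\tau$ via $|\tau(s)|\leq s\|\tau'\|_{L^\infty}$ or $|\tau(s)|\leq s^{1/2}\|\tau'\|_{L^2}$, Leibniz-expand, and close each piece with Lemma \ref{lem:CalIneqLp1}. The combinatorial bookkeeping you flag is genuine but routine, and your remark that the borderline terms are handled by the $L^4$ case of Lemma \ref{lem:CalIneqLp1} with a symmetric weight split (as in the $\|u\|_{X^3}\|v\|_{X^3}$ bound for $m=0$, where $\|s^{3/2}u''v''\|_{L^2}\leq\|s^{3/4}u''\|_{L^4}\|s^{3/4}v''\|_{L^4}$) is exactly right.
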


\begin{lemma}\label{lem:CalIneqY2}
If $\tau|_{s=0}=0$, then we have 
\[
\opnorm{\tau u''v''}_m ^\dag \lesssim
\begin{cases}
 (\|\tau'\|_{L^\infty} + \|\dt\tau'\|_{L^2}) \opnorm{u}_{4,1} \opnorm{v}_{3,1} &\mbox{for}\quad m=1, \\
 \|(\tau',\dt\tau')\|_{L^2} \opnorm{u}_{4,1} \opnorm{v}_{4,1} &\mbox{for}\quad m=1, \\
 (\|\dt^{m-2}\tau'\|_{L^\infty} + \opnorm{\tau'}_{m-1} + \|\dt^m\tau'\|_{L^2}) \opnorm{u}_{m+2,m} \opnorm{v}_{m+2,m} &\mbox{for}\quad m\geq2.
\end{cases}
\]
\end{lemma}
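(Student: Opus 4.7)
The plan is to derive these bounds by expanding $\dt^j(\tau u''v'')$ via the Leibniz rule and then applying Lemma \ref{lem:CalIneq2} term by term. The assumption $\tau|_{s=0}=0$ is pointwise in $t$, so $(\dt^k\tau)|_{s=0}=0$ for every non-negative integer $k$, which legitimates the use of Lemma \ref{lem:CalIneq2} on each Leibniz piece. This mirrors exactly the structure of the proof of Lemma \ref{lem:CalIneqY1}, where Lemma \ref{lem:CalIneq1} was used in the same role.

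For the case $m=1$, I would write
\[
\opnorm{\tau u''v''}_1^\dag
 = \|\tau u''v''\|_{Y^1}
  + \|(\dt\tau)u''v'' + \tau(\dt u)''v'' + \tau u''(\dt v)''\|_{Y^0}
\]
and apply Lemma \ref{lem:CalIneq2} with index $1$ to the first summand and with index $0$ to each of the three $Y^0$ summands. Using the $\|\tau'\|_{L^2}$-variants of Lemma \ref{lem:CalIneq2} uniformly yields the second displayed bound. To obtain the first bound, I use the $\|\tau'\|_{L^\infty}$-variant for terms whose $\tau$-factor carries no time derivative and the $\|(\dt\tau)'\|_{L^2}$-variant for the single remaining piece, collecting the factors $\opnorm{u}_{4,1}\opnorm{v}_{3,1}$.

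For the case $m\geq 2$, I would expand
\[
\opnorm{\tau u''v''}_m^\dag
 \lesssim \sum_{j=0}^m\sum_{k_1+k_2+k_3=j}
  \bigl\|(\dt^{k_1}\tau)(\dt^{k_2}u)''(\dt^{k_3}v)''\bigr\|_{Y^{m-j}},
\]
and estimate each term $I(k_1,k_2,k_3;j)$ by the appropriate case of Lemma \ref{lem:CalIneq2} of order $m-j$, organised according to how the time derivatives are distributed. When $m-j\geq 2$, the last case of Lemma \ref{lem:CalIneq2} bounds the summand by $\|\dt^{k_1}\tau'\|_{L^\infty\cap X^{m-j-1}}\|\dt^{k_2}u\|_{X^{m-j+2}}\|\dt^{k_3}v\|_{X^{m-j+2}}$, and the $\tau'$-factor is absorbed into $\opnorm{\tau'}_{m-1}$ via Lemma \ref{lem:embedding2}. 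When $m-j=1$, I use the third case of Lemma \ref{lem:CalIneq2}, distinguishing $k_1=j$ from $k_1<j$. When $m-j=0$, three subcases arise: the extremal term $k_1=m$ produces the endpoint $\|\dt^m\tau'\|_{L^2}$ contribution through the first case of Lemma \ref{lem:CalIneq2}; the term $k_1=m-2$ paired with $k_2=2$ or $k_3=2$ (using the $\|\tau'\|_{L^\infty}$-variant of the second case) produces the $\|\dt^{m-2}\tau'\|_{L^\infty}$ contribution; the remaining intermediate distributions are uniformly controlled by $\opnorm{\tau'}_{m-1}$. Throughout, the $u$- and $v$-factors are bounded by $\opnorm{u}_{m+2,m}$ and $\opnorm{v}_{m+2,m}$.

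The principal obstacle is the combinatorial bookkeeping: one must check, for every distribution $(k_1,k_2,k_3;j)$, that a suitable variant of Lemma \ref{lem:CalIneq2} produces a bound consistent with exactly the three $\tau'$-norms $\|\dt^{m-2}\tau'\|_{L^\infty}$, $\opnorm{\tau'}_{m-1}$, $\|\dt^m\tau'\|_{L^2}$ on the right-hand side. The key structural observation is that only the extremal distributions, where a single factor absorbs nearly the entire derivative budget, trigger the endpoint $L^\infty$- or top-order $L^2$-contributions, while all intermediate distributions fit comfortably into $\opnorm{\tau'}_{m-1}$ via Lemmas \ref{lem:embedding} and \ref{lem:embedding2}.
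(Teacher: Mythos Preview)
Your approach is the same as the paper's: Leibniz expansion of $\dt^j(\tau u''v'')$ followed by term-by-term application of Lemma~\ref{lem:CalIneq2}, with the analysis split according to $m-j$. One correction to your bookkeeping: the endpoint $\|\dt^{m-2}\tau'\|_{L^\infty}$ is actually forced in the regime $m-j\geq2$, not at $m-j=0$. When $m-j=2$ and $k_1=j=m-2$, the last case of Lemma~\ref{lem:CalIneq2} demands $\|\dt^{m-2}\tau'\|_{L^\infty\cap X^1}$, and $\opnorm{\tau'}_{m-1}$ only furnishes $\|\dt^{m-2}\tau'\|_{X^1}$, which does not control the unweighted $L^\infty$ norm; so your claim that ``the $\tau'$-factor is absorbed into $\opnorm{\tau'}_{m-1}$'' fails precisely there. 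Conversely, in the $m-j=0$ case with $k_1=m-2$ the paper uses the $\|\cdot\|_{L^2}$-variant of the $Y^0$ estimate (swapping the $X^3$ and $X^4$ roles of $u,v$ according to which of $k_2,k_3$ is larger), so the $L^\infty$ endpoint is not needed at that spot. With this adjustment your outline matches the paper's argument.
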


\begin{proof}
By Lemma \ref{lem:CalIneq2}, we see that 
\begin{align*}
\opnorm{\tau u''v''}_1^\dag
&\leq \|\tau u''v''\|_{Y^1} + \|\tau (\dt u)''v''\|_{Y^0} + \|\tau u''(\dt v)''\|_{Y^0} + \|(\dt\tau)u''v''\|_{Y^0} \\
&\lesssim \|\tau'\|_{L^\infty}( \|u\|_{X^4}\|v\|_{X^3} + \|\dt u\|_{X^3} \|v\|_{X^4} + \|u\|_{X^4}\|\dt v\|_{X^3}) \\
&\quad\;
 + \|\dt\tau'\|_{L^2}\|u\|_{X^4}\|v\|_{X^3} \\
&\lesssim (\|\tau'\|_{L^\infty} + \|\dt\tau'\|_{L^2}) \opnorm{u}_{4,1} \opnorm{v}_{3,1}
\end{align*}
and that 
\begin{align*}
\opnorm{\tau u''v''}_1^\dag
&\lesssim \|\tau'\|_{L^2}( \|u\|_{X^4}\|v\|_{X^4} + \|\dt u\|_{X^3} \|v\|_{X^3} + \|u\|_{X^4}\|\dt v\|_{X^2}) \\
&\quad\;
 + \|\dt\tau'\|_{L^2}\|u\|_{X^4}\|v\|_{X^3} \\
&\lesssim \|(\tau',\dt\tau')\|_{L^2} \opnorm{u}_{4,1} \opnorm{v}_{4,1},
\end{align*}
which imply the desired estimate in the case $m=1$. 
We then consider the case $m\geq2$. 
\[
\opnorm{\tau u''v''}_m^\dag \lesssim \sum_{0\leq j\leq m}\sum_{j_0+j_1+j_2=j}\|(\dt^{j_0}\tau)(\dt^{j_1}u)''(\dt^{j_2}v)''\|_{Y^{m-j}}.
\]
We evaluate $I(j_0,j_1,j_2;j)=\|(\dt^{j_0}\tau)(\dt^{j_1}u)''(\dt^{j_2}v)''\|_{Y^{m-j}}$, where $0\leq j\leq m$ and $j_0+j_1+j_2=j$. 
In the following calculations, we use Lemma \ref{lem:CalIneq2}. 
\begin{enumerate}
\item[(i)]
The case $j\leq m-2$. 
\begin{align*}
I(j_0,j_1,j_2;j)
&\lesssim \|\dt^{j_0}\tau'\|_{L^\infty\cap X^{m-1-j}} \|\dt^{j_1}u\|_{X^{m+2-j}} \|\dt^{j_2}v\|_{X^{m+2-j}} \\
&\lesssim (\|\dt^{m-2}\tau'\|_{L^\infty}+\opnorm{\tau'}_{m-1})\opnorm{u}_{m+2,m} \opnorm{v}_{m+2,m},
\end{align*}
where we used $\|f\|_{L^\infty} \lesssim \|f\|_{X^2}$. 
\item[(ii)]
The case $j=m-1$. 
\begin{align*}
I(j_0,j_1,j_2;j)
&\lesssim 
\begin{cases}
 \|\tau'\|_{L^\infty} \|\dt^{m-1}u\|_{X^3} \|v\|_{X^4} &\mbox{for}\quad j_1=j, \\
 \|\tau'\|_{L^\infty} \|u\|_{X^4} \|\dt^{m-1}v\|_{X^3} &\mbox{for}\quad j_2=j, \\
 \|\dt^{j_0}\tau'\|_{L^2} \|\dt^{j_1}u\|_{X^4} \|\dt^{j_2}v\|_{X^4} &\mbox{for}\quad j_1,j_2\leq j-1
\end{cases} \\
&\lesssim (\|\tau'\|_{L^\infty}+\opnorm{\tau'}_{m-1})\opnorm{u}_{m+2,m} \opnorm{v}_{m+2,m}.
\end{align*}
\item[(iii)]
The case $j=m$. 
\begin{align*}
I(j_0,j_1,j_2;j)
&\lesssim 
\begin{cases}
 \|\tau'\|_{L^\infty} \|\dt^mu\|_{X^2}\|v\|_{X^4} &\mbox{for}\quad j_1=j, \\
 \|\tau'\|_{L^\infty} \|u\|_{X^4}\|\dt^mv\|_{X^2} &\mbox{for}\quad j_2=j, \\
 \|\tau'\|_{L^\infty} \|\dt^{j_1}u\|_{X^3}\|\dt^{j_2}v\|_{X^3} &\mbox{for}\quad j_0=0, \ j_1,j_2\leq j-1, \\
 \|\dt^{j_0}\tau'\|_{L^2} \|\dt^{j_1}u\|_{X^4}\|\dt^{j_2}v\|_{X^3 } &\mbox{for}\quad j_0\geq1, \ j_1\leq j_2\leq j-1, \\
 \|\dt^{j_0}\tau'\|_{L^2} \|\dt^{j_1}u\|_{X^3}\|\dt^{j_2}v\|_{X^4 } &\mbox{for}\quad j_0\geq1, \ j_2\leq j_1\leq j-1
\end{cases} \\
&\lesssim (\|\tau'\|_{L^\infty} + \opnorm{\tau'}_{m-1} + \|\dt^m\tau'\|_{L^2}) \opnorm{u}_{m+2,m} \opnorm{v}_{m+2,m}.
\end{align*}
\end{enumerate}
By noting $\|\tau'\|_{L^\infty} \lesssim \|\dt^{m-2}\tau'\|_{L^\infty} + \opnorm{\tau'}_{m-1}$, 
these imply the desired estimate in the case $m\geq2$. 
\end{proof}

\begin{lemma}\label{lem:CalIneqY3}
If $\tau|_{s=0}=0$, then we have 
\[
\|s^\frac12\tau u''v''\|_{L^1} 
\lesssim \min\{ \|\tau'\|_{L^\infty}\|u\|_{X^2}\|v\|_{X^3}, \|\tau'\|_{L^2}\|u\|_{X^2}\|v\|_{X^4}, \|\tau'\|_{L^2}\|u\|_{X^3}\|v\|_{X^3} \}.
\]
\end{lemma}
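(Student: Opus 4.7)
The plan is to exploit the boundary condition $\tau|_{s=0}=0$ by writing $\tau(s)=\int_0^s\tau'(\sigma)\,\mathrm{d}\sigma$, which yields the two pointwise bounds $|\tau(s)|\leq s\|\tau'\|_{L^\infty}$ and, via Cauchy--Schwarz on the integral defining $\tau$, $|\tau(s)|\leq s^{1/2}\|\tau'\|_{L^2}$. Substituting one of these bounds into $\|s^{1/2}\tau u''v''\|_{L^1}=\int_0^1 s^{1/2}|\tau||u''||v''|\,\mathrm{d}s$ reduces the task to estimating $\int_0^1 s^{\alpha}|u''||v''|\,\mathrm{d}s$ for $\alpha\in\{1,3/2\}$, which is handled by a Cauchy--Schwarz split of the $s$-weight followed by identifying the resulting weighted $L^2$ norms with the $X^m$-norms via the definition \eqref{WSS}.

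Concretely, for the first bound I would use $|\tau|\leq s\|\tau'\|_{L^\infty}$ and the split $s^{3/2}=s\cdot s^{1/2}$ to get $\int s^{3/2}|u''||v''|\,\mathrm{d}s\leq\|su''\|_{L^2}\|s^{1/2}v''\|_{L^2}\lesssim\|u\|_{X^2}\|v\|_{X^3}$. For the second bound I would use $|\tau|\leq s^{1/2}\|\tau'\|_{L^2}$ and the split $s=s\cdot 1$ to get $\int s|u''||v''|\,\mathrm{d}s\leq\|su''\|_{L^2}\|v''\|_{L^2}\lesssim\|u\|_{X^2}\|v\|_{X^4}$, where $\|v''\|_{L^2}\leq\|v\|_{H^2}\leq\|v\|_{X^4}$ directly from \eqref{WSS}. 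For the third bound I would again use $|\tau|\leq s^{1/2}\|\tau'\|_{L^2}$ but split $s=s^{1/2}\cdot s^{1/2}$ symmetrically to obtain $\int s|u''||v''|\,\mathrm{d}s\leq\|s^{1/2}u''\|_{L^2}\|s^{1/2}v''\|_{L^2}\lesssim\|u\|_{X^3}\|v\|_{X^3}$.

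There is essentially no obstacle beyond careful bookkeeping of the powers of $s$: the only subtlety is making sure that in each case the Cauchy--Schwarz split of the remaining $s$-weight after factoring out $\tau$ matches the weights appearing in the defining norms of $X^2$, $X^3$, and $X^4$, respectively. No commutator estimates, averaging operators, or embeddings beyond the trivial $L^2 \hookleftarrow H^2 \hookleftarrow X^4$ are needed.
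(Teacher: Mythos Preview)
Your proof is correct and follows essentially the same approach as the paper, which simply notes that $|\tau(s)| \leq s^{1-\frac1p}\|\tau'\|_{L^p}$ for $1\leq p\leq\infty$ (the cases $p=\infty$ and $p=2$ being your two pointwise bounds) and leaves the remaining Cauchy--Schwarz splits implicit. You have merely spelled out in full the weight bookkeeping that the paper takes for granted.
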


\begin{proof}
It is sufficient to note that $|\tau(s)| \leq s^{1-\frac1p}\|\tau'\|_{L^p}$ for $1\leq p\leq \infty$. 
\end{proof}

\section{Estimates for initial values and compatibility conditions I}\label{sect:EstIV1}
We consider the initial boundary value problem \eqref{LS}. 
Let $\bm{u}$ be a smooth solution to the problem \eqref{LS} and put $\bm{u}_j^\mathrm{in}=(\dt^j\bm{u})|_{t=0}$ for $j=0,1,2,\ldots$. 
By applying $\dt^j$ to the hyperbolic system in \eqref{LS} and putting $t=0$, we see that the $\{\bm{u}_j^\mathrm{in}\}$ are calculated inductively by 
\begin{equation}\label{HOIV}
\bm{u}_{j+2}^\mathrm{in}
= \sum_{k=0}^j\binom{j}{k}\bigl\{ ((\dt^{j-k}A)|_{t=0}(\bm{u}_k^\mathrm{in})')' + (\dt^{j-k}Q)|_{t=0}(\bm{u}_k^\mathrm{in})'(1) \bigr\} + (\dt^j\bm{f})|_{t=0}
\end{equation}
for $j=0,1,2,\ldots$. 
Then, by applying $\dt^j$ to the boundary condition in \eqref{LS} and putting $t=0$, we obtain 
\begin{equation}\label{CC1}
\bm{u}_j^\mathrm{in}(1)=0
\end{equation}
for $j=0,1,2,\ldots$. 
These are necessary conditions that the data $(\bm{u}_0^\mathrm{in},\bm{u}_1^\mathrm{in},\bm{f})$ should satisfy 
for the existence of a regular solution to the problem \eqref{LS} and are known as compatibility conditions. 
To state the conditions more precisely, we need to evaluate the initial values $\{\bm{u}_j^\mathrm{in}\}$. 
Although it is sufficient to evaluate $\dt^j\bm{u}$ only at time $t=0$, we will evaluate them at general time $t$.

\begin{lemma}\label{lem:EstIV}
Let $m\geq2$ be an integer and assume that Assumptions \ref{ass:BEE} and \ref{ass:HOEE} are satisfied with a positive constant $M_0$ 
and that $\bm{f}\in\mathscr{X}_T^{m-2}$. 
Then, there exists a positive constant $C_0$ depending only on $m$ and $M_0$ such that if $\bm{u}$ is a solution to \eqref{LS}, 
then we have $\opnorm{\bm{u}(t)}_m \leq C_0( \opnorm{\bm{u}(t)}_{m,1} + \opnorm{\bm{f}(t)}_{m-2} )$. 
\end{lemma}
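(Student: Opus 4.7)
The plan is to establish by finite induction on $j \in \{0, 1, \ldots, m\}$ the pointwise bound $\|\dt^j\bm{u}(t)\|_{X^{m-j}} \leq C(\opnorm{\bm{u}(t)}_{m,1} + \opnorm{\bm{f}(t)}_{m-2})$ with a constant $C$ depending only on $m$ and $M_0$; summing over $j$ then yields the asserted estimate. The base cases $j=0$ and $j=1$ are immediate since these two quantities are already components of $\opnorm{\bm{u}(t)}_{m,1}$.

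For the inductive step at $j\geq2$, I would use the equation in \eqref{LS} to trade two time derivatives for spatial derivatives, writing
\[
\dt^j\bm{u} \;=\; \dt^{j-2}\ddot{\bm{u}} \;=\; \dt^{j-2}(A\bm{u}')' + \dt^{j-2}\bigl(Q\bm{u}'(1,t)\bigr) + \dt^{j-2}\bm{f},
\]
and bound the three pieces separately in $X^{m-j}$. The forcing contributes at most $\opnorm{\bm{f}(t)}_{m-2}$. For the localized piece I would apply Leibniz and bound each summand $(\dt^{j-2-k}Q)\dt^k\bm{u}'(1,t)$, with $0\leq k\leq j-2$, in $X^{m-j}$ by the product $|\dt^k\bm{u}'(1,t)|\,\|\dt^{j-2-k}Q\|_{X^{m-j}}$; the trace inequality $|\dt^k\bm{u}'(1,t)|\lesssim \|\dt^k\bm{u}\|_{X^2}\leq \|\dt^k\bm{u}\|_{X^{m-k}}$ (valid because $k\leq j-2\leq m-2$) together with $\|\dt^{j-2-k}Q\|_{X^{m-j}}\leq \opnorm{Q(t)}_{m-2}\leq M_0$ from Assumption~\ref{ass:HOEE} (or simply $\|Q\|_{L^2}\leq M_0$ from Assumption~\ref{ass:BEE} in the boundary case $m=j=2$) yields a bound of the form $M_0\,\opnorm{\bm{u}(t)}_{m, j-2}$.

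The principal term $\dt^{j-2}(A\bm{u}')'$ is treated in two regimes, leveraging the key fact that $A\simeq s\,\mathrm{Id}$ implies $A|_{s=0}=0$, hence $\dt^l A|_{s=0}=0$ for every $l\geq 0$. For $j=2$, Lemma~\ref{lem:EstAu} gives $\|(A\bm{u}')'\|_{X^{m-2}}\lesssim \|A'\|_{X^{(m-2)\vee 2}}\|\bm{u}\|_{X^m}$, with the $A'$-factor absorbed into $M_0$ via $\|A'\|_{L^\infty}\leq M_0$ from Assumption~\ref{ass:BEE} when $m=2$, via $\|A'\|_{X^2}\leq \opnorm{A'(t)}_{2,*}\leq M_0$ when $m=3$, and via $\|A'\|_{X^{m-2}}\leq \opnorm{A'(t)}_{m-2}\leq M_0$ when $m\geq 4$. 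For $j\geq 3$, I would apply Lemma~\ref{lem:EstAu2} with parameters $(m-2, j-2)$ (legitimate since $1\leq j-2\leq m-2$), producing $\|\dt^{j-2}(A\bm{u}')'\|_{X^{m-j}}\leq \opnorm{(A\bm{u}')'(t)}_{m-2, j-2}\lesssim \opnorm{A'(t)}_{m-2, j-2}\opnorm{\bm{u}(t)}_{m, j-2}$ when $m\geq 4$, and $\lesssim \opnorm{A'(t)}_{2,1}\opnorm{\bm{u}(t)}_{3,1}$ in the exceptional case $m=j=3$; in both subcases the $A'$-factor is absorbed into $M_0$ via $\opnorm{A'(t)}_{m-2}+\opnorm{A'(t)}_{2,*}\leq M_0$.

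Putting the three bounds together yields $\|\dt^j\bm{u}\|_{X^{m-j}}\lesssim M_0\,\opnorm{\bm{u}(t)}_{m, j-1}+\opnorm{\bm{f}(t)}_{m-2}$, so the induction hypothesis at step $j-1$ closes step $j$, and step $j=m$ provides the claim. The main difficulty is purely bookkeeping: one must verify that each Leibniz term arising from $\dt^{j-2}(A\bm{u}')'$ is controlled using only the $M_0$-parts of Assumptions~\ref{ass:BEE} and~\ref{ass:HOEE} (avoiding any appeal to $M_1$), a task handled cleanly by Lemma~\ref{lem:EstAu2}, which packages the combinatorics of the Leibniz expansion so that the single hypothesis $\opnorm{A'}_{m-2}+\opnorm{A'}_{2,*}\leq M_0$ suffices to absorb all the $A'$-factors.
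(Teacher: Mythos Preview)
Your proposal is correct and follows essentially the same approach as the paper: both arguments induct on the number of time derivatives, use the equation to trade $\dt^2$ for $(A\bm{u}')' + Q\bm{u}'(1,t) + \bm{f}$, control the principal term via Lemmas~\ref{lem:EstAu} and~\ref{lem:EstAu2} (with the same case distinctions $m=2$, $m=3$, $m\geq4$), bound the localized term by the trace inequality $|\dt^k\bm{u}'(1,t)|\lesssim\|\dt^k\bm{u}\|_{X^2}$, and absorb all coefficient norms into $M_0$ through Assumptions~\ref{ass:BEE} and~\ref{ass:HOEE}. The only cosmetic difference is that the paper inducts on the aggregate norm $\opnorm{\bm{u}}_{m,j+2}$ directly, while you induct on the individual component $\|\dt^j\bm{u}\|_{X^{m-j}}$; the right-hand side you obtain is actually $\opnorm{\bm{u}}_{m,j-2}$ rather than $\opnorm{\bm{u}}_{m,j-1}$, but this only strengthens the bound.
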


\begin{proof}
Let $0\leq j\leq m-2$. 
It follows from \eqref{LS} that 
\begin{align*}
\opnorm{\bm{u}}_{m,j+2}
&\leq \opnorm{\bm{u}}_{m,1} + \opnorm{\ddot{\bm{u}}}_{m-2,j} \\
&\leq \opnorm{\bm{u}}_{m,1} + \opnorm{(A\bm{u}')'}_{m-2,j} + \opnorm{Q\bm{u}'(1,t)}_{m-2,j} + \opnorm{\bm{f}}_{m-2,j}.
\end{align*}
We note that by Assumption \ref{ass:BEE} we have $A|_{s=0}=O$. 
Therefore, by Lemmas \ref{lem:EstAu} and \ref{lem:EstAu2} we see that 
\begin{align*}
\opnorm{(A\bm{u}')'}_{m-2,j}
&\lesssim
\begin{cases}
 \|A'\|_{L^\infty} \opnorm{\bm{u}}_{m,j} &\mbox{for}\quad m=2, \\
 \|A'\|_{X^2} \opnorm{\bm{u}}_{m,j} &\mbox{for}\quad m=3, j=0, \\
 \opnorm{A'}_{2,1} \opnorm{\bm{u}}_{m,j} &\mbox{for}\quad m=3, j=1, \\
 \opnorm{A'}_{m-2,j} \opnorm{\bm{u}}_{m,j} &\mbox{for}\quad m\geq4
\end{cases} \\
&\lesssim \opnorm{\bm{u}}_{m,j}.
\end{align*}
By the standard Sobolev embedding theorem, we see also that 
\begin{align*}
\opnorm{Q\bm{u}'(1,t)}_{m-2,j}
&\lesssim \opnorm{Q}_{m-2} \sum_{k=0}^j\|\dt^k\bm{u}\|_{X^2} 
 \lesssim \opnorm{\bm{u}}_{m,j}.
\end{align*}
Therefore, we get $\opnorm{\bm{u}}_{m,j+2} \lesssim \opnorm{\bm{u}}_{m,j} + \opnorm{\bm{f}}_{m-2}$ for $0\leq j\leq m-2$. 
Using this inductively on $j$, we obtain the desired estimate. 
\end{proof}

Under the same assumptions in Lemma \ref{lem:EstIV}, we see that if the initial data satisfy $\bm{u}_j^\mathrm{in} \in X^{m-j}$ for $j=0,1$, 
then the initial values $\{\bm{u}_j^\mathrm{in}\}$ satisfy $\bm{u}_j^\mathrm{in} \in X^{m-j}$ for $j=0,1,\ldots,m$, so that 
their boundary values $\bm{u}_j^\mathrm{in}(1)$ are defined for $j=0,1,\ldots,m-1$.

\begin{definition}\label{def:CC1}
Let $m\geq1$ be an integer. 
We say that the data $(\bm{u}_0^\mathrm{in},\bm{u}_1^\mathrm{in},\bm{f})$ 
for the initial boundary value problem \eqref{LS} satisfy the compatibility conditions up to order $m-1$ if \eqref{CC1} holds for any $j=0,1,\ldots,m-1$. 
\end{definition}


\section{Energy estimates}\label{sect:EE}
Difficulties showing a well-posedness of the initial boundary value problem \eqref{LS} is caused not only by the degeneracy of the matrix $A(s,t)$ 
at the end $s=0$ but also by the localized term $Q(s,t)\bm{u}'(1,t)$. 
The first difficulty could be overcame by using weights in the norm of Sobolev spaces, see Takayama \cite{Takayama2018}, 
whereas the second one will be treated by regularizing the hyperbolic system. 
In this paper, we adopt the regularized problem \eqref{rLS}. 
As we will see later, in the case $\varepsilon>0$ the regularized term $\varepsilon s\dot{\bm{u}}'$ makes the localized term 
$Q(s,t)\bm{u}'(1,t)$ to be of lower order. 
Before giving energy estimates for the solution, we recall the following lemma.

\begin{lemma}[{\cite[Lemma 6.1]{IguchiTakayama2023}}]\label{lem:equiv}
Let $M_0$ be a positive constant. 
There exists a constant $C_0=C(M_0)>1$ such that if a symmetric matrix $A(s)$ satisfies 
$M_0^{-1}s\mathrm{Id}\leq A(s)\leq M_0s\mathrm{Id}$ and $|A'(s)|\leq M_0$ for $s\in[0,1]$, then we have the equivalence 
\[
C_0^{-1}( \|s\bm{u}''\|_{L^2}^2+\|\bm{u}'\|_{L^2}^2 ) \leq \|(A\bm{u}')'\|_{L^2}^2 \leq C_0( \|s\bm{u}''\|_{L^2}^2+\|\bm{u}'\|_{L^2}^2 ).
\]
\end{lemma}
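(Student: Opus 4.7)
The plan is to verify the two inequalities separately. The upper bound is immediate from the Leibniz expansion $(A\bm{u}')'=A\bm{u}''+A'\bm{u}'$ together with the pointwise bounds $|A(s)\bm{u}''(s)|\leq M_0 s|\bm{u}''(s)|$, which follows from the matrix inequality $A\leq M_0 s\mathrm{Id}$, and $|A'(s)\bm{u}'(s)|\leq M_0|\bm{u}'(s)|$. Squaring, integrating over $(0,1)$, and applying the triangle inequality yields $\|(A\bm{u}')'\|_{L^2}^2\lesssim \|s\bm{u}''\|_{L^2}^2+\|\bm{u}'\|_{L^2}^2$ with a constant depending only on $M_0$.

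For the lower bound, I would first rearrange the same identity as $A\bm{u}''=(A\bm{u}')'-A'\bm{u}'$ and invoke the positive-definiteness $A\geq M_0^{-1}s\mathrm{Id}$, which gives $|A(s)\bm{v}|\geq M_0^{-1}s|\bm{v}|$ for every vector $\bm{v}$. This yields $s|\bm{u}''|\lesssim |(A\bm{u}')'|+|\bm{u}'|$ pointwise, hence $\|s\bm{u}''\|_{L^2}^2\lesssim \|(A\bm{u}')'\|_{L^2}^2+\|\bm{u}'\|_{L^2}^2$. It therefore remains to control $\|\bm{u}'\|_{L^2}$ by $\|(A\bm{u}')'\|_{L^2}$ alone.

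The key tool here is the classical Hardy inequality. Setting $\bm{g}(s):=A(s)\bm{u}'(s)$, we have $\bm{g}'=(A\bm{u}')'\in L^2$ and $\bm{g}(\mathbf{0})=\bm{0}$: indeed $\bm{g}\in H^1(0,1)$ is continuous up to $s=0$, and the pointwise bound $|\bm{g}(s)|/s\leq M_0|\bm{u}'(s)|$ places $\bm{g}/s$ in $L^2$, which is incompatible with any nonzero limit of $\bm{g}$ at the origin. Since $A\geq M_0^{-1}s\mathrm{Id}$ also implies $|\bm{u}'(s)|\leq M_0 s^{-1}|\bm{g}(s)|$, the one-dimensional Hardy inequality $\|\bm{g}/s\|_{L^2}^2\leq 4\|\bm{g}'\|_{L^2}^2$ yields $\|\bm{u}'\|_{L^2}^2\leq 4M_0^2\|(A\bm{u}')'\|_{L^2}^2$. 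Inserting this into the previous bound for $\|s\bm{u}''\|_{L^2}^2$ closes the argument. The main subtle point—really the only obstacle—is justifying $\bm{g}(0)=\bm{0}$ in the relevant regularity class so that Hardy's inequality applies with no spurious boundary terms; everything else is a matter of exploiting the spectral bounds on $A$ and $A^{-1}$.
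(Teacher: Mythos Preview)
Your argument is correct. The paper itself does not prove this lemma; it is quoted from \cite[Lemma~6.1]{IguchiTakayama2023}, so there is no in-paper proof to compare against. The route you take---Leibniz expansion for the upper bound, and Hardy's inequality applied to $\bm{g}=A\bm{u}'$ for the lower bound---is the natural one, and the justification that $\bm{g}(0)=\bm{0}$ (via $\bm{g}\in H^1\hookrightarrow C^0$ together with $s^{-1}\bm{g}\in L^2$) is the right way to handle the boundary issue. Note incidentally that the Hardy inequality $\|s^{-1}\bm{g}\|_{L^2}\leq 2\|\bm{g}'\|_{L^2}$ is precisely the content of Lemma~\ref{lem:WEM1} with $j=0$, $\beta=0$, $p=2$, since $\bm{g}(0)=\bm{0}$ gives $s^{-1}\bm{g}=\mathscr{M}(\bm{g}')$; so your argument fits seamlessly with the toolkit already assembled in the paper.
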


The following proposition gives a basic energy estimate for the solution of the problem \eqref{rLS}.

\begin{proposition}\label{prop:BEE}
Let $T$, $M_0$, and $M_1$ be positive constants and suppose that Assumption \ref{ass:BEE} is satisfied and that 
$\bm{f}\in C^0([0,T];L^2)$ and $\dt\bm{f}\in L^1(0,T;L^2)$. 
Then, there exist positive constants $C_0=C_0(M_0)$ and $\gamma_1=\gamma_1(M_0,M_1)$ such that the solution $\bm{u}\in\mathscr{X}_T^{2,*}$ to the problem 
\eqref{rLS} satisfies an additional regularity $\varepsilon\bm{u}'|_{s=1}\in H^1(0,T)$ and an energy estimate 
\begin{align}\label{BEE}
& I_{\gamma,t}( \opnorm{\bm{u}(\cdot)}_2 ) + \sqrt{\varepsilon}|\bm{u}'(1,\cdot)|_{H_\gamma^1(0,t)} \\
&\leq C_0 \left\{ \|\bm{u}(0)\|_{X^2}+\|\dot{\bm{u}}(0)\|_{X^1}+\|\bm{f}(0)\|_{L^2} + S_{\gamma,t}^*( \|\dt\bm{f}(\cdot)\|_{L^2} ) \right\} \nonumber
\end{align}
for any $t\in[0,T]$, $\gamma\geq\gamma_1$, and $\varepsilon\in[0,1]$. 
\end{proposition}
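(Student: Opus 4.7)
The plan is to close a $\gamma$-weighted energy inequality based on the natural energy
\[
 \mathcal{E}(t):=\tfrac12\bigl(\|\dot{\bm{u}}(t)\|_{L^2}^2+\|\ddot{\bm{u}}(t)\|_{L^2}^2+(A(t)\bm{u}'(t),\bm{u}'(t))_{L^2}+(A(t)\dot{\bm{u}}'(t),\dot{\bm{u}}'(t))_{L^2}\bigr),
\]
which, because $A\simeq s\,\mathrm{Id}$, is comparable to $\|\dot{\bm{u}}\|_{L^2}^2+\|s^{1/2}\bm{u}'\|_{L^2}^2+\|\ddot{\bm{u}}\|_{L^2}^2+\|s^{1/2}\dot{\bm{u}}'\|_{L^2}^2$. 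Testing \eqref{rLS} against $\dot{\bm{u}}$, and testing the $t$-differentiated equation against $\ddot{\bm{u}}$, integration by parts in $s$ cancels the symmetric principal terms $\pm(A\dot{\bm{u}}',\bm{u}')$ and $\pm(A\ddot{\bm{u}}',\dot{\bm{u}}')$ thanks to the symmetry of $A$, the boundary conditions $\dot{\bm{u}}(1,t)=\ddot{\bm{u}}(1,t)=0$, and the degeneracy $A(0,t)=0$. The regularization $\varepsilon s\dot{\bm{u}}'$ contributes dissipative terms $-\tfrac{\varepsilon}{2}\|\dot{\bm{u}}\|_{L^2}^2$ and $-\tfrac{\varepsilon}{2}\|\ddot{\bm{u}}\|_{L^2}^2$ through $\int_0^1 s\,\dot{\bm{u}}\cdot\dot{\bm{u}}'\,ds=-\tfrac12\|\dot{\bm{u}}\|_{L^2}^2$, producing a differential identity
\[
 \dt\mathcal{E}(t)+\tfrac{\varepsilon}{2}\bigl(\|\dot{\bm{u}}(t)\|_{L^2}^2+\|\ddot{\bm{u}}(t)\|_{L^2}^2\bigr)=\mathcal{R}(t),
\]
where $\mathcal{R}$ collects the forcings paired with $\dot{\bm{u}}$ and $\ddot{\bm{u}}$, the $\dot{A}$-commutator terms (bounded via Lemma \ref{lem:EstAu} together with $|\dot{A}|\le M_1 s$ and $|\dot{A}'|\le M_1$), and the localized residuals $(Q\bm{u}'(1,t),\dot{\bm{u}})$, $(Q\dot{\bm{u}}'(1,t),\ddot{\bm{u}})$, $(\dot{Q}\bm{u}'(1,t),\ddot{\bm{u}})$.

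To upgrade $\mathcal{E}$ to the full norm $\opnorm{\bm{u}(t)}_2^2$, I would use the equation algebraically: rearranging \eqref{rLS} gives
\[
 \|(A\bm{u}')'(t)\|_{L^2}\lesssim \|\ddot{\bm{u}}(t)\|_{L^2}+\|Q(t)\|_{L^2}|\bm{u}'(1,t)|+\varepsilon\|s\dot{\bm{u}}'(t)\|_{L^2}+\|\bm{f}(t)\|_{L^2},
\]
and by Lemma \ref{lem:equiv} the left-hand side is equivalent to $\|s\bm{u}''(t)\|_{L^2}+\|\bm{u}'(t)\|_{L^2}$. Combined with the elementary trace identity $|\bm{u}'(1,t)|^2=\int_0^1\ds(s|\bm{u}'|^2)\,ds\le \|\bm{u}'(t)\|_{L^2}^2+2\|s\bm{u}''(t)\|_{L^2}\|\bm{u}'(t)\|_{L^2}$ and with $\|\bm{u}(t)\|_{L^2}\le\|\bm{u}(0)\|_{L^2}+\int_0^t\|\dot{\bm{u}}\|_{L^2}$, this recovers the full $X^2$-norm from $\mathcal{E}(t)$ and data. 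The same trace bound yields $|\bm{u}'(1,t)|\lesssim \|\bm{u}(t)\|_{X^2}$ for the residual $(Q\bm{u}'(1,t),\dot{\bm{u}})$. Evaluating \eqref{rLS} at $s=1$ (where $\ddot{\bm{u}}(1,t)=0$) gives the pointwise boundary identity
\[
 \varepsilon\dot{\bm{u}}'(1,t)=-\bigl[(A\bm{u}')'(1,t)+Q(1,t)\bm{u}'(1,t)+\bm{f}(1,t)\bigr],
\]
which both establishes the additional regularity $\varepsilon\bm{u}'|_{s=1}\in H^1(0,T)$ and feeds the $\sqrt{\varepsilon}|\bm{u}'(1,\cdot)|_{H^1_\gamma}$ bound.

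The closing step is the standard $\gamma$-weighted Gronwall: multiplying the differential identity by $\mathrm{e}^{-2\gamma t'}$ and integrating, the $\gamma$-damping $-2\gamma\int\mathrm{e}^{-2\gamma t'}\mathcal{E}\,dt'$ absorbs the lower-order quadratic-in-unknown residuals (including the trace contributions involving $|\bm{u}'(1,t)|$) once $\gamma\ge\gamma_1(M_0,M_1)$ is large enough. The pairing $(\dot{\bm{u}},\bm{f})$ enters through $I_{\gamma,t}(\|\bm{f}\|_{L^2})\le I_{\gamma,t}(\opnorm{\bm{u}}_2)\cdot(\text{small})$, and $(\ddot{\bm{u}},\dt\bm{f})$ enters through $S^*_{\gamma,t}(\|\dt\bm{f}\|_{L^2})$ by the very definition \eqref{defSstar}. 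The quantitative boundary bound is obtained by pairing the boundary identity with $\bm{u}'(1,t)$ (producing the ODE $\tfrac{\varepsilon}{2}\dt|\bm{u}'(1,t)|^2=-\cdots$ whose $\mathrm{e}^{-2\gamma t'}$-integral yields the $\bm{u}'(1,\cdot)$-piece after absorption) and, for the $\dot{\bm{u}}'(1,\cdot)$-piece, by controlling $\varepsilon|\dot{\bm{u}}'(1,t)|^2$ in terms of boundary traces already mastered via the first part of the argument.

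The main obstacle, as I see it, is the treatment of $(Q\dot{\bm{u}}'(1,t),\ddot{\bm{u}})$: the trace $\dot{\bm{u}}'(1,t)$ is not controlled by $\opnorm{\bm{u}}_2$, and naively eliminating it via the boundary identity would cost a factor of $\varepsilon^{-1}$ that is fatal as $\varepsilon\to 0$. The resolution is to keep this residual coupled to the dissipation $\tfrac{\varepsilon}{2}\|\ddot{\bm{u}}\|_{L^2}^2$ and apply Young's inequality with the $\varepsilon$-split carefully calibrated so that one part is absorbed by that dissipation and the other part feeds directly into $\sqrt{\varepsilon}|\bm{u}'(1,\cdot)|_{H^1_\gamma}$ on the left, thereby preserving the uniformity in $\varepsilon\in[0,1]$ required by the conclusion.
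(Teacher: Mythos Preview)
There is a genuine gap in your treatment of the localized term, precisely at the step you yourself flag as the main obstacle. When you differentiate the equation in time and test with $\ddot{\bm u}$, the residual $(Q\dot{\bm u}'(1,t),\ddot{\bm u})_{L^2}$ appears, and your proposed Young split cannot close. Writing $\|Q\|_{L^2}|\dot{\bm u}'(1,t)|\,\|\ddot{\bm u}\|_{L^2}\le \tfrac{\alpha}{2}\|\ddot{\bm u}\|_{L^2}^2+\tfrac{C}{2\alpha}|\dot{\bm u}'(1,t)|^2$, absorption of the first piece into your dissipation $\tfrac{\varepsilon}{2}\|\ddot{\bm u}\|_{L^2}^2$ forces $\alpha\lesssim\varepsilon$, leaving $\varepsilon^{-1}|\dot{\bm u}'(1,t)|^2$ on the right. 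But the only boundary control available on the left is $\varepsilon|\dot{\bm u}'(1,t)|^2$, which is smaller by a factor $\varepsilon^{2}$ and cannot absorb it as $\varepsilon\to 0$. Moreover, your energy identity does not even produce a boundary dissipation term: pairing $\varepsilon s\ddot{\bm u}'$ with $\ddot{\bm u}$ gives $-\tfrac{\varepsilon}{2}\|\ddot{\bm u}\|_{L^2}^2$ and no boundary contribution (since $\ddot{\bm u}(1,t)=0$), so the $\sqrt{\varepsilon}|\bm u'(1,\cdot)|_{H^1_\gamma}$ piece has no source in your scheme. Your attempt to recover it via the pointwise identity $\varepsilon\dot{\bm u}'(1,t)=-[(A\bm u')'(1,t)+Q(1,t)\bm u'(1,t)+\bm f(1,t)]$ also fails: with only $\bm u\in X^2$ and $\bm f\in L^2$, the traces $(A\bm u')'(1,t)$ and $\bm f(1,t)$ are not defined.

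The paper's proof avoids time-differentiating the equation altogether. It uses the energy $(A\dot{\bm u}',\dot{\bm u}')_{L^2}+\|(A\bm u')'\|_{L^2}^2$, whose time derivative, after one integration by parts in $s$ (using $\ddot{\bm u}(1,t)=0$), equals $-2((A\dot{\bm u}')',\ddot{\bm u}-(A\bm u')')_{L^2}=-2((A\dot{\bm u}')',\varepsilon s\dot{\bm u}'+Q\bm u'(1,t)+\bm f)_{L^2}$. The pairing $-2\varepsilon((A\dot{\bm u}')',s\dot{\bm u}')_{L^2}$ then produces the boundary dissipation $\varepsilon(\dot{\bm u}'\cdot A\dot{\bm u}')|_{s=1}\simeq\varepsilon|\dot{\bm u}'(1,t)|^2$ directly. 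The localized term $((A\dot{\bm u}')',Q\bm u'(1,t))_{L^2}$ is handled by the splitting $Q\bm u'(1,t)=sQ\bm u'+Q(\bm u'(1,t)-s\bm u')$: the first piece is moved into the energy via a time derivative (this is why the functional contains $2((A\bm u')',sQ\bm u'+\bm f)_{L^2}$), and the second piece is integrated by parts in $s$, using that $(\bm u'(1,t)-s\bm u')|_{s=1}=0$, to obtain $-(A\dot{\bm u}',(Q(\bm u'(1,t)-s\bm u'))')_{L^2}$, which is bounded by $C_0\opnorm{\bm u}_{2,*}^2$. No $\dot{\bm u}'(1,t)$ ever appears in the residuals, and the estimate is uniform in $\varepsilon\in[0,1]$.
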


\begin{proof}
In the following calculations, we simply denote by $C_0$ the constant depending only on $M_0$ and by $C_1$ the constant depending also on $M_1$. 
These constants may change from line to line.

We first suppose that the solution $\bm{u}$ satisfies $\bm{u}\in C^2([0,T]; X^2)$. 
Then, we see that 
\begin{align*}
& \frac{\mathrm{d}}{\mathrm{d}t}\left\{ (A\dot{\bm{u}}',\dot{\bm{u}}')_{L^2} + \|(A\bm{u}')'\|_{L^2}^2 \right\}
 - \left\{ ((\dt A)\dot{\bm{u}}',\dot{\bm{u}}')_{L^2} + 2(((\dt A)\bm{u}')',(A\bm{u}')')_{L^2} \right\} \\
&= 2(A\dot{\bm{u}}',\ddot{\bm{u}}')_{L^2} + 2((A\dot{\bm{u}}')',(A\bm{u}')')_{L^2} \\
&= -2((A\dot{\bm{u}}')',\ddot{\bm{u}}-(A\bm{u}')')_{L^2} \\
&= -2((A\dot{\bm{u}}')',\varepsilon s\dot{\bm{u}}'+Q\bm{u}'(1,t)+\bm{f})_{L^2},
\end{align*}
where we used the boundary condition $\ddot{\bm{u}}|_{s=1}=0$. 
Here, by integration by parts we have 
%
%
%
\[
2((A\dot{\bm{u}}')',s\dot{\bm{u}}')_{L^2}
= (\dot{\bm{u}}'\cdot A\dot{\bm{u}}')|_{s=1} - (\dot{\bm{u}}',(A-sA')\dot{\bm{u}}')_{L^2}.
\]
We see also that 
\begin{align*}
((A\dot{\bm{u}}')',Q\bm{u}'(1,t)+\bm{f})_{L^2}
&= ((A\dot{\bm{u}}')',sQ\bm{u}'+\bm{f})_{L^2} + ((A\dot{\bm{u}}')',Q(\bm{u}'(1,t)-s\bm{u}'))_{L^2} \\
&= \frac{\mathrm{d}}{\mathrm{d}t}((A\bm{u}')',sQ\bm{u}'+\bm{f})_{L^2} - (((\dt A)\bm{u}')',sQ\bm{u}'+\bm{f})_{L^2} \\
&\quad\;
 - ((A\bm{u}')',\dt(sQ\bm{u}'+\bm{f}))_{L^2} - (A\dot{\bm{u}}',(Q(\bm{u}'(1,t)-s\bm{u}'))')_{L^2}.
\end{align*}
In view of these identities, we introduce an energy functional $\mathscr{E}_2(t)$ by 
\[
\mathscr{E}_2(t) = (A\dot{\bm{u}}',\dot{\bm{u}}')_{L^2} + \|(A\bm{u}')'\|_{L^2}^2 + 2((A\bm{u}')',sQ\bm{u}'+\bm{f})_{L^2}
 + \lambda( \|\dot{\bm{u}}\|_{L^2}^2 + \|\bm{u}\|_{X^1}^2 ),
\]
where $\lambda>0$ is a parameter. 
By Lemma \ref{lem:equiv}, it is easy to check that there exists a sufficiently large $\lambda_0=\lambda(M_0)$ such that 
if we choose $\lambda=\lambda_0$, then we have 
\begin{equation}\label{equiv}
\begin{cases}
 \mathscr{E}_2(t) \leq C_0( \opnorm{ \bm{u}(t) }_{2,*}^2 + \|\bm{f}(t)\|_{L^2}^2 ), \\
 \opnorm{ \bm{u}(t) }_{2,*}^2 \leq C_0 ( \mathscr{E}_2(t) + \|\bm{f}(t)\|_{L^2}^2 )
\end{cases}
\end{equation}
for $0\leq t\leq T$. 
Moreover, we have 
\begin{align*}
\frac{\mathrm{d}}{\mathrm{d}t}\mathscr{E}_2(t) + \varepsilon(\dot{\bm{u}}'\cdot A\dot{\bm{u}}')|_{s=1}
&= ((\dt A)\dot{\bm{u}}',\dot{\bm{u}}')_{L^2} + 2(((\dt A)\bm{u}')',(A\bm{u}')'+sQ\bm{u}'+\bm{f})_{L^2} \\
&\quad\;
 + 2((A\bm{u}')',\dt(sQ\bm{u}'+\bm{f}))_{L^2} + 2(A\dot{\bm{u}}',(Q(\bm{u}'(1,t)-s\bm{u}'))')_{L^2} \\
&\quad\;
 + \varepsilon(\dot{\bm{u}}',(A-sA')\dot{\bm{u}}')_{L^2} + \lambda\frac{\mathrm{d}}{\mathrm{d}t}( \|\dot{\bm{u}}\|_{L^2}^2 + \|\bm{u}\|_{X^1}^2 ) \\
&\leq C_1 ( \opnorm{ \bm{u}(t) }_{2,*}^2 + \opnorm{ \bm{u}(t) }_{2,*} \|\bm{f}(t)\|_{L^2} ) + C_0\opnorm{ \bm{u}(t) }_{2,*}\|\dot{\bm{f}}(t)\|_{L^2},
\end{align*}
where we used Lemma \ref{lem:equiv} together with $|\dt A(s,t)| \leq M_1s$, which comes directly from Assumption \ref{ass:BEE}. 
Therefore, for any $\gamma>0$ we have 
\begin{align*}
& \frac{\mathrm{d}}{\mathrm{d}t}\{ \mathrm{e}^{-2\gamma t}\mathscr{E}_2(t) \}
 + 2\gamma\mathrm{e}^{-2\gamma t}\mathscr{E}_2(t) + \varepsilon M_0^{-1}\mathrm{e}^{-2\gamma t}|\dot{\bm{u}}'(1,t)|^2 \\
&\leq \mathrm{e}^{-2\gamma t}\{ C_1 ( \opnorm{ \bm{u}(t) }_{2,*}^2 + \opnorm{ \bm{u}(t) }_{2,*} \|\bm{f}(t)\|_{L^2}) 
 + C_0\opnorm{ \bm{u}(t) }_{2,*} \|\dot{\bm{f}}(t)\|_{L^2} \}.
\end{align*}
Integrating this with respect to $t$ and using \eqref{equiv}, $|f|_{L_\gamma^2(0,t)} \leq \gamma^{-\frac12}I_{\gamma,t}(f)$, and 
\[
\left| \int_0^t\mathrm{e}^{-2\gamma t'}f(t')\varphi(t')\mathrm{d}t' \right| \leq I_{\gamma,t}(f)S_{\gamma,t}^*(\varphi),
\]
we obtain 
\begin{align*}
& \mathrm{e}^{-2\gamma t}\opnorm{ \bm{u}(t) }_{2,*}^2 + 2\gamma \int_0^t \mathrm{e}^{-2\gamma t'}\opnorm{ \bm{u}(t') }_{2,*}^2\mathrm{d}t'
 + \varepsilon|\dot{\bm{u}}'(1,\cdot)|_{L_\gamma^2(0,t)}^2 \\
&\leq C_0\{ \opnorm{ \bm{u}(0) }_{2,*}^2 + I_{\gamma,t}(\|\bm{f}(\cdot)\|_{L^2})^2
 + I_{\gamma,t}( \opnorm{ \bm{u}(\cdot) }_{2,*}) S_{\gamma,t}^*(\|\dot{\bm{f}}(\cdot)\|_{L^2}) \} \\
&\quad\;
 + \gamma S_{\gamma,t}^*(\|\bm{f}(\cdot)\|_{L^2})^2 + C_1\gamma^{-1}I_{\gamma,t}( \opnorm{ \bm{u}(\cdot) }_{2,*} )^2.
\end{align*}
As was shown by Iguchi and Lannes \cite[Lemma 2.16]{IguchiLannes2021}, we have also 
\begin{equation}\label{S*}
\begin{cases}
 \gamma S_{\gamma,t}^*(\|\bm{f}(\cdot)\|_{L^2}) \leq I_{\gamma,t}(\|\bm{f}(\cdot)\|_{L^2})
  \leq C( \|\bm{f}(0)\|_{L^2} + S_{\gamma,t}^*(\|\dot{\bm{f}}(\cdot)\|_{L^2}) ), \\
 |\bm{u}'(1,t)|_{L_\gamma^2(0,t)} \leq C( \gamma^{-\frac12}|\bm{u}'(1,0)|+\gamma^{-1}|\dot{\bm{u}}'(1,t)|_{L_\gamma^2(0,t)})
\end{cases}
\end{equation}
with an absolute constant $C>0$. 
Therefore, by choosing $\gamma_1$ so large that $C_1\gamma_1^{-1}\leq\frac12$, we obtain 
\[
I_{\gamma,t}( \opnorm{ \bm{u}(\cdot) }_{2,*} ) + \sqrt{\varepsilon}|\bm{u}'(1,\cdot)|_{H_\gamma^1(0,t)}
\leq C_0\{ \opnorm{ \bm{u}(0) }_{2,*} + \|\bm{f}(0)\|_{L^2} + S_{\gamma,t}^*(\|\dot{\bm{f}}(\cdot)\|_{L^2}) \}.
\]
By using the hyperbolic system for $\bm{u}$, we have $\|\ddot{\bm{u}}\|_{L^2} \leq C_0( \opnorm{\bm{u}}_{2,*}+\|\bm{f}\|_{L^2})$. 
These estimates imply the desired one.

In the case $\bm{u}\in\mathscr{X}_T^{2,*}$, we use a mollifier $\rho_\epsilon*$ with respect to $t$ with a kernel 
$\rho_\epsilon(t)=\frac{1}{\epsilon}\rho(\frac{t}{\epsilon})$ satisfying $\rho\in C_0^\infty(\mathbb{R})$, $\operatorname{supp}\rho \subset (-1,0)$, and 
$\int_\mathbb{R}\rho(t)\mathrm{d}t=1$. 
The procedure is standard so we omit the details. 
\end{proof}

We then prepare estimates for the solution $\bm{u}$ to the problem \eqref{LS}, which convert spatial derivatives into time derivatives 
by using the hyperbolic system in \eqref{LS}.

\begin{lemma}\label{lem:Estu}
Let $T$ and $M_0$ be positive constants and $m\geq2$ an integer. 
Suppose that Assumptions \ref{ass:BEE} and \ref{ass:HOEE} are satisfied and that $\bm{f}\in\mathscr{X}_T^{m-2,*}$ in the case $m\geq3$. 
Then, there exists a positive constant $C_0=C_0(m,M_0)$ such that the solution $\bm{u}\in\mathscr{X}_T^{m,*}$ to the problem \eqref{rLS} satisfies 
\[
\opnorm{\bm{u}(t)}_m \leq C_0 \bigl( \opnorm{\dt^{m-2}\bm{u}(t)}_2 + \opnorm{\bm{u}(t)}_{m-1} + \opnorm{\bm{f}(t)}_{m-2,*} \bigr)
\]
for any $t\in[0,T]$, where we use a notational convention $\opnorm{\cdot}_{0,*}=0$. 
\end{lemma}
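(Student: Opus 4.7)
The case $m=2$ is immediate: $\opnorm{\dt^{m-2}\bm{u}}_2 = \opnorm{\bm{u}}_2$ already appears on the right, so the inequality holds trivially with $C_0=1$.

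For $m\geq 3$, the strategy is to exploit the hyperbolic equation in \eqref{rLS} to trade spatial regularity for temporal regularity. Writing $\opnorm{\bm{u}(t)}_m = \sum_{j=0}^m \|\dt^j\bm{u}(t)\|_{X^{m-j}}$, the three terms with $j\in\{m-2,m-1,m\}$ are controlled immediately by $\opnorm{\dt^{m-2}\bm{u}}_2$. The remaining terms $\|\dt^j\bm{u}\|_{X^{m-j}}$ with $0\leq j\leq m-3$ will be bounded by downward induction on $j$.

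Fix such a $j$. Differentiating \eqref{rLS} $j$ times in $t$ and using $\dt^j((A\bm{u}')') = \sum_{k=0}^j\binom{j}{k}((\dt^kA)(\dt^{j-k}\bm{u})')'$, we isolate the main part as
\begin{equation*}
(A(\dt^j\bm{u})')' = \dt^{j+2}\bm{u} - \varepsilon s\dt^{j+1}\bm{u}' - \dt^j(Q\bm{u}'(1,t)) - \dt^j\bm{f} - \sum_{k=1}^{j}\binom{j}{k}((\dt^kA)(\dt^{j-k}\bm{u})')'.
\end{equation*}
The core tool is an elliptic-type regularity estimate
\begin{equation*}
\|\bm{v}\|_{X^{n+2}} \lesssim \|(A\bm{v}')'\|_{X^n} + \|\bm{v}\|_{X^{n+1}}
\end{equation*}
for $\bm{v}$ with $\bm{v}|_{s=1}=\bm{0}$, proved by transferring to the unit disc via $\bm{v}\mapsto\bm{v}^\sharp$ (Lemma \ref{lem:NormEq}), under which $(A\bm{v}')'$ corresponds to a uniformly elliptic operator on $D$ (since $A\simeq s\mathrm{Id}$), so that classical elliptic regularity with Dirichlet data applies; the $n=0$ case is Lemma \ref{lem:equiv}. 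Applying this with $\bm{v}=\dt^j\bm{u}$ and $n=m-j-2$ yields
$$\|\dt^j\bm{u}\|_{X^{m-j}} \lesssim \|(A(\dt^j\bm{u})')'\|_{X^{m-j-2}} + \|\dt^j\bm{u}\|_{X^{m-j-1}},$$
and $\|\dt^j\bm{u}\|_{X^{m-j-1}}\leq \opnorm{\bm{u}}_{m-1}$.

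It remains to estimate each term on the right of the displayed identity in $X^{m-j-2}$. The term $\|\dt^{j+2}\bm{u}\|_{X^{m-j-2}}$ is absorbed by the downward induction on $j$ (base cases $j+2\geq m-2$ being covered by $\opnorm{\dt^{m-2}\bm{u}}_2$); similarly $\varepsilon\|s\dt^{j+1}\bm{u}'\|_{X^{m-j-2}}\lesssim\|\dt^{j+1}\bm{u}\|_{X^{m-j-1}}$ by Lemma \ref{lem:embedding2}, handled by induction. The forcing piece $\|\dt^j\bm{f}\|_{X^{m-j-2}}$ is bounded by $\opnorm{\bm{f}}_{m-2,*}$ since $j\leq m-3$. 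For the localized contribution, one expands via Leibniz and uses a one-sided trace estimate $|\dt^l\bm{u}'(1,t)|\lesssim \|\dt^l\bm{u}\|_{X^2}$ obtained by a local Sobolev inequality on $[\tfrac12,1]$ (where the degeneracy is inactive and $\|s\ds^2\cdot\|_{L^2}$ controls the second derivative), combined with $\opnorm{Q}_{m-2}\leq M_0$ from Assumption \ref{ass:HOEE}(iii). Finally, each commutator $((\dt^kA)(\dt^{j-k}\bm{u})')'$ with $k\geq 1$ is bounded in $X^{m-j-2}$ by Lemma \ref{lem:EstAu}, flexibly choosing between its two forms so that one factor matches the available bound on $A'$ (either $\opnorm{A'}_{m-2}$ or $\opnorm{A'}_{2,*}$) while the other involves $\|\dt^{j-k}\bm{u}\|_{X^{m-j\vee 4}}$, which fits inside $\opnorm{\bm{u}}_{m-1}$ because $k\geq 1$ gives $m-j\leq m-(j-k)-1$.

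The main obstacle is the elliptic regularity estimate in the weighted scale $X^{n+2}$ for the degenerate operator $(A\cdot')'$; the disc transformation reduces this to a standard non-degenerate problem, but the book-keeping linking the two scales requires care. A secondary delicate point is the borderline commutator analysis when $m$ is small (notably $m=4$), where one must exploit both bounds of Assumption \ref{ass:HOEE}(iii) and the vanishing of $\dt^kA$ at $s=0$, i.e., writing $\dt^kA=s\mathscr{M}(\dt^kA')$ and invoking Lemma \ref{lem:WEM1}, in order to keep the constant dependent on $M_0$ alone.
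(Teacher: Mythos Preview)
Your inductive skeleton (downward induction on the number of time derivatives, using the equation to convert spatial into temporal regularity) is exactly the paper's strategy, and your treatment of the forcing, the localized term, the $\varepsilon s\dt^{j+1}\bm{u}'$ piece, and the small-$m$ commutators is fine. The real divergence is at the step you yourself flag as ``the main obstacle'': the elliptic regularity estimate $\|\bm{v}\|_{X^{n+2}}\lesssim\|(A\bm{v}')'\|_{X^n}+\|\bm{v}\|_{X^{n+1}}$.

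Your proposed proof of this by passing to the disc and invoking \emph{classical} elliptic regularity does not close. After the $\sharp$-transform the coefficient becomes $A_0^\sharp$ with $A_0=\mathscr{M}(A')$, and under Assumption~\ref{ass:HOEE} one only has $A_0\in X^{m-2}$, i.e.\ $A_0^\sharp\in H^{m-2}(D)$. Two-dimensional elliptic regularity up to $H^{m}(D)$ requires the coefficient to be a multiplier on $H^{m-1}(D)$, which in 2D needs $A_0^\sharp\in H^{m-1}(D)$; you are one order short at the top level $n=m-2$. The gap can only be closed by exploiting radial symmetry (equivalently, the one-dimensional algebra property of the $X^k$ scale, Lemma~\ref{lem:Algebra}), at which point you are no longer using ``classical'' disc theory but effectively redoing a weighted 1D argument.

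The paper avoids this by working directly in the weighted spaces via the exact norm decomposition
\[
\|\dt^{m-j}\bm{u}\|_{X^j}^2=\|\dt^{m-j}\bm{u}\|_{L^2}^2+\|\dt^{m-j}\bm{u}'\|_{X^{j-2}}^2+\|s^{j/2}\ds^j\dt^{m-j}\bm{u}\|_{L^2}^2,
\]
and handling the two nontrivial pieces separately: the middle-order term by integrating the equation once, $\bm{u}'=A_0^{-1}\mathscr{M}\bm{F}$ with $\bm{F}=\ddot{\bm{u}}-Q\bm{u}'(1,t)-\varepsilon s\dot{\bm{u}}'-\bm{f}$, and estimating with Lemmas~\ref{lem:Algebra}, \ref{lem:EstCompFunc1}, \ref{lem:WEM1}; and the top-order weighted term by applying $\ds^{j-2}\dt^{m-j}$ to the system, using the pointwise bound $A\geq M_0^{-1}s\mathrm{Id}$ to extract $s|\ds^j\dt^{m-j}\bm{u}|$, and controlling the resulting commutators via Lemma~\ref{lem:commutator}. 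This direct route needs only $A'\in X^{m-2}\cap X^{2}$ and $|A'|\leq M_0$, exactly what the assumptions provide, and makes the $C_0=C_0(m,M_0)$ dependence transparent.
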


\begin{proof}
It is sufficient to evaluate $\|\dt^{m-j}\bm{u}\|_{X^j}$ for $3\leq j\leq m$. 
To this end, we use the identity 
\begin{align}\label{NormDeco}
\|\dt^{m-j}\bm{u}\|_{X^j}^2
&= \|\dt^{m-j}\bm{u}\|_{L^2}^2 + \|\dt^{m-j}\bm{u}'\|_{X^{j-2}}^2 + \|s^\frac{j}{2}\ds^j\dt^{m-j}\bm{u}\|_{L^2}^2.
\end{align}
Obviously, we have $\|\dt^{m-j}\bm{u}\|_{L^2} \leq \opnorm{\bm{u}}_{m-1}$. 
To evaluate the second term in the right-hand side, we introduce a matrix valued function $A_0(s,t)=\frac{1}{s}A(s,t)=\mathscr{M}(A'(\cdot,t))(s)$, 
which is symmetric and satisfies 
\[
\begin{cases}
 M_0^{-1}\mathrm{Id} \leq A_0(s,t) \leq M_0\mathrm{Id}, \\
 \opnorm{A_0(t)}_{m-2}+\opnorm{A_0(t)}_{2,*} \leq 2M_0
\end{cases}
\]
for any $(s,t)\in(0,1)\times(0,T)$, where we used Lemma \ref{lem:WEM1} to derive the above estimates. 
Therefore, by Lemma \ref{lem:EstCompFunc1} we obtain $\opnorm{A_0^{-1}(t)}_{m-2}+\opnorm{A_0^{-1}(t)}_{2,*} \leq C_0$ 
with a constant $C_0$ depending only on $m$ and $M_0$. 
Moreover, by the hyperbolic system in \eqref{rLS} we have $\bm{u}'=A_0^{-1}\mathscr{M}\bm{F}$ with 
$\bm{F}= \ddot{\bm{u}}-(Q\bm{u}'(1,t)+\varepsilon s\dot{\bm{u}}'+\bm{f})$, so that by Lemmas \ref{lem:embedding2}, \ref{lem:Algebra}, and \ref{lem:WEM1}
\begin{align}\label{EstMidOrd}
\|\dt^{m-j}\bm{u}'\|_{X^{j-2}}
&\lesssim 
 \begin{cases}
  \|\bm{F}\|_{X^1} &\mbox{for}\quad m=j=3, \\
  \|\dt\bm{F}\|_{X^1} + \|\mathscr{M}\bm{F}\|_{X^2} &\mbox{for}\quad m=4, j=3, \\
  \|\dt^{m-j}\bm{F}\|_{X^{j-2}}+\opnorm{\bm{F}}_{m-3} &\mbox{for}\quad m\geq5, j=3 \mbox{ or } m\geq j\geq 4
 \end{cases} \\
&\lesssim \|\dt^{m-(j-2)}\bm{u}\|_{X^{j-2}} + \|\dt^{m-(j-1)}\bm{u}\|_{X^{j-1}} + \opnorm{\bm{u}}_{m-1} + \opnorm{\bm{f}}_{m-2,*}, \nonumber
\end{align}
where we used the identity $\mathscr{M}(s\dot{\bm{u}}')=\dot{\bm{u}}-\mathscr{M}\dot{\bm{u}}$ to evaluate $\|\mathscr{M}\bm{F}\|_{X^2}$ in the case $m=4,j=3$. 

We proceed to evaluate the highest order term in \eqref{NormDeco}. 
Applying $\ds^{j-2}\dt^{m-j}$ to the hyperbolic system in \eqref{rLS}, we obtain 
\begin{align*}
s|\ds^j\dt^{m-j}\bm{u}|
& \lesssim |[\ds^{j-1},A]\dt^{m-j}\bm{u}'| + |\ds^{j-2}([\dt^{m-j},A]\bm{u}')'| \\
&\quad\;
 + |\ds^{j-2}\dt^{m-j}(\ddot{\bm{u}}-(Q\bm{u}'(1,t)+\varepsilon s\dot{\bm{u}}'+\bm{f}))|.
\end{align*}
Therefore, by Lemmas \ref{lem:commutator} we obtain 
\begin{align}\label{CommEst}
\|s^\frac{j}{2}\ds^{j}\dt^{m-j}\bm{u}\|_{L^2}
&\lesssim \|s^\frac{j-2}{2}[\ds^{j-1},A]\dt^{m-j}\bm{u}'\|_{L^2} + \|s^\frac{j-2}{2}\ds^{j-2}([\dt^{m-j},A]\bm{u}')'\|_{L^2} \\
&\quad\;
 + \|s^\frac{j-2}{2}\ds^{j-2}\dt^{m-j}(\ddot{\bm{u}}-(Q\bm{u}'(1,t)+\varepsilon s\dot{\bm{u}}'+\bm{f}))\|_{L^2} \nonumber \\
&\lesssim \|A'\|_{X^{j-2\vee2}} \|\dt^{m-j}\bm{u}'\|_{X^{j-2}} + \|([\dt^{m-j},A]\bm{u}')'\|_{X^{j-2}} \nonumber \\
&\quad\;
 + \|\dt^{m-j}(\ddot{\bm{u}}-(Q\bm{u}'(1,t)+\varepsilon s\dot{\bm{u}}'+\bm{f})\|_{X^{j-2}}. \nonumber
\end{align}
As for the second term in the right-hand side, it is sufficient to evaluate it in the case $3\leq j\leq m-1$. 
In the case $m\geq5$ we see that 
\begin{align*}
\|([\dt^{m-j},A]\bm{u}')'\|_{X^{j-2}}
&\lesssim \|((\dt^{m-j}A)\bm{u}')'\|_{X^{j-2}} + \sum_{j_1+j_2=m-j-2} \|((\dt^{j_1+1}A)(\dt^{j_2+1}\bm{u})')'\|_{X^{j-2}} \\
&\lesssim \|\dt^{m-j}A'\|_{X^{j-2}}\|\bm{u}\|_{X^{j\vee4}} + \sum_{j_1+j_2=m-j-2} \|\dt^{j_1+1}A'\|_{X^{j-2\vee2}} \|\dt^{j_2+1}\bm{u}\|_{X^{j}} \\
&\lesssim \opnorm{A'}_{m-2} \opnorm{\bm{u}}_{m-1}.
\end{align*}
In the case $m=4$ we may assume $j=3$ so that by \eqref{AuxEst1} we evaluate it as 
$\|([\dt^{m-j},A]\bm{u}')'\|_{X^{j-2}}=\|((\dt A)\bm{u}')'\|_{X^1} \lesssim \|\dt A'\|_{X^1}\|\bm{u}'\|_{X^2}$. 
To evaluate $\|\bm{u}'\|_{X^2}$ we modify \eqref{EstMidOrd} slightly to get 
$\|\bm{u}'\|_{X^2} \lesssim \|\dt^2\bm{u}\|_{X^2}+\opnorm{\bm{u}}_3+\opnorm{\bm{f}}_{2,*}$. 
The last term in \eqref{CommEst} can be easily evaluated so that we get 
\begin{align*}
\|s^\frac{j}{2}\ds^{j}\dt^{m-j}\bm{u}\|_{L^2}
&\lesssim \|\dt^{m-(j-2)}\bm{u}\|_{X^{j-2}} + \|\dt^{m-(j-1)}\bm{u}\|_{X^{j-1}} \\
&\quad\; 
 + \|\dt^{m-j}\bm{u}'\|_{X^{j-2}} + \opnorm{\bm{u}}_{m-1} + \opnorm{\bm{f}}_{m-2,*}.
\end{align*}

Summarizing the above estimates, we obtain 
\[
\|\dt^{m-j}\bm{u}\|_{X^j}
\lesssim \|\dt^{m-(j-2)}\bm{u}\|_{X^{j-2}} + \|\dt^{m-(j-1)}\bm{u}\|_{X^{j-1}} + \opnorm{\bm{u}}_{m-1} + \opnorm{\bm{f}}_{m-2,*}
\]
for $3\leq j\leq m$. 
Using this inductively, we finally obtain the desired estimate. 
\end{proof}

The following proposition gives higher order energy estimates for the solution of the problem \eqref{rLS}.

\begin{proposition}\label{prop:HOEE}
Let $T$, $M_0$, and $M_1$ be positive constants and $m\geq2$ an integer. 
Suppose that Assumptions \ref{ass:BEE} and \ref{ass:HOEE} are satisfied and that 
$\bm{f}\in\mathscr{X}_T^{m-2}$ and $\dt^{m-1}\bm{f}\in L^1(0,T;L^2)$. 
Then, there exist positive constants $C_0=C_0(m,M_0)$ and $\gamma_1=\gamma_1(m,M_0,M_1)$ such that the solution $\bm{u}\in\mathscr{X}_T^{m,*}$ to the problem 
\eqref{rLS} satisfies an additional regularity $\varepsilon\bm{u}'|_{s=1}\in H^{m-1}(0,T)$ and an energy estimate 
\begin{align}\label{HOEE}
& I_{\gamma,t}( \opnorm{\bm{u}(\cdot)}_m ) + \sqrt{\varepsilon}|\bm{u}'(1,\cdot)|_{H_\gamma^{m-1}(0,t)} \\
&\leq C_0 \left\{ \|\bm{u}(0)\|_{X^m}+\|\dot{\bm{u}}(0)\|_{X^{m-1}} + I_{\gamma,t}( \opnorm{\bm{f}(\cdot)}_{m-2} )
 + S_{\gamma,t}^*( \|\dt^{m-1}\bm{f}(\cdot)\|_{L^2} ) \right\} \nonumber
\end{align}
for any $t\in[0,T]$, $\gamma\geq\gamma_1$, and $\varepsilon\in[0,1]$. 
\end{proposition}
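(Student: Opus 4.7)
The plan is induction on $m$. The base case $m=2$ is precisely Proposition \ref{prop:BEE}, so I focus on the inductive step: assuming \eqref{HOEE} at order $m-1$, I derive it at order $m$.

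Set $\bm{v}=\dt^{m-2}\bm{u}$ and differentiate the first line of \eqref{rLS} $m-2$ times in $t$. Then $\bm{v}$ solves the same regularized hyperbolic system with the same coefficients $A$, $Q$ and the same homogeneous boundary condition $\bm{v}|_{s=1}=\bm{0}$, but with the modified source
\[
\tilde{\bm{f}}=\dt^{m-2}\bm{f}+([\dt^{m-2},A]\bm{u}')'+[\dt^{m-2},Q]\bm{u}'(1,t).
\]
Since Assumption \ref{ass:BEE} is unchanged, Proposition \ref{prop:BEE} applied to $\bm{v}$ controls $I_{\gamma,t}(\opnorm{\dt^{m-2}\bm{u}(\cdot)}_2)+\sqrt{\varepsilon}|\dt^{m-2}\bm{u}'(1,\cdot)|_{H_\gamma^1(0,t)}$ by $\|\bm{v}(0)\|_{X^2}+\|\dot{\bm{v}}(0)\|_{X^1}+\|\tilde{\bm{f}}(0)\|_{L^2}+S_{\gamma,t}^*(\|\dt\tilde{\bm{f}}(\cdot)\|_{L^2})$. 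The full norm $I_{\gamma,t}(\opnorm{\bm{u}(\cdot)}_m)$ is then recovered from Lemma \ref{lem:Estu}, which gives $\opnorm{\bm{u}}_m\lesssim \opnorm{\dt^{m-2}\bm{u}}_2+\opnorm{\bm{u}}_{m-1}+\opnorm{\bm{f}}_{m-2,*}$, whose middle term is absorbed by the inductive hypothesis at order $m-1$. The missing low-order boundary traces $\dt^j\bm{u}'(1,\cdot)$ with $0\leq j\leq m-2$ are handled by the trivial estimate $|\dt^j\bm{u}'(1,t)|\lesssim \|\dt^j\bm{u}(t)\|_{X^2}\leq \opnorm{\bm{u}(t)}_m$.

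The heart of the argument is thus the estimation of $\tilde{\bm{f}}$. For $\|\tilde{\bm{f}}(0)\|_{L^2}$, Lemma \ref{lem:EstIV} at $t=0$ expresses the initial values $\dt^j\bm{u}|_{t=0}$ for $0\leq j\leq m-2$ that enter the commutator in terms of $\bm{u}_0^\mathrm{in}$, $\bm{u}_1^\mathrm{in}$, and $\bm{f}(0)$, and each summand of the commutator is controlled by Lemma \ref{lem:EstAu} with the coefficient bounds from Assumption \ref{ass:HOEE}. For the more delicate $\dt\tilde{\bm{f}}=\dt^{m-1}\bm{f}+\dt([\dt^{m-2},A]\bm{u}')'+\dt([\dt^{m-2},Q]\bm{u}'(1,\cdot))$, the first piece feeds directly into $S_{\gamma,t}^*(\|\dt^{m-1}\bm{f}(\cdot)\|_{L^2})$ in \eqref{HOEE}, while the two commutator pieces are bounded in $L^2$ pointwise in $t$ by $C_1\opnorm{\bm{u}(t)}_m$ using Lemma \ref{lem:Comm}(i) and (iii) with $m$ replaced by $m-2$, together with Assumptions \ref{ass:BEE} and \ref{ass:HOEE}. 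Applying \eqref{propSstar} in the form $S_{\gamma,t}^*(g)\leq \gamma^{-1}I_{\gamma,t}(g)$ to these pieces produces $C_1\gamma^{-1}I_{\gamma,t}(\opnorm{\bm{u}(\cdot)}_m)$, which is absorbed into the left-hand side once $\gamma\geq\gamma_1(m,M_0,M_1)$ is chosen large enough.

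The main obstacle is the careful bookkeeping that matches each term appearing in the commutators (and their time derivatives) against the mixed-regularity norms in Assumption \ref{ass:HOEE}; this is especially delicate at the borderline case $m=3$, where the sum in Lemma \ref{lem:Comm}(i) is empty and the required $L^\infty$ bound on $\dt A'$ must be taken from Assumption \ref{ass:BEE} rather than from any embedding of $X^1$ into $L^\infty$. Once this verification is discharged and the inductive hypothesis is inserted, the induction closes and \eqref{HOEE} follows.
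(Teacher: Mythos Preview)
Your proposal is correct and follows essentially the same route as the paper: apply Proposition \ref{prop:BEE} to $\bm{v}=\dt^{m-2}\bm{u}$, estimate the commutator source $\tilde{\bm{f}}$ via Lemma \ref{lem:Comm}, and recover the full norm $\opnorm{\bm{u}}_m$ from $\opnorm{\dt^{m-2}\bm{u}}_2$ through Lemma \ref{lem:Estu}. The one genuine difference is in how you dispose of the residual term $\opnorm{\bm{u}}_{m-1}$ produced by Lemma \ref{lem:Estu}: you invoke the inductive hypothesis at order $m-1$, whereas the paper avoids induction altogether and instead uses the elementary integration inequality $I_{\gamma,t}(\opnorm{\bm{u}(\cdot)}_{m-1})\leq C\bigl(\opnorm{\bm{u}(0)}_m+S_{\gamma,t}^*(\opnorm{\bm{u}(\cdot)}_m)\bigr)$ (since $\|\dt^j\bm{u}\|_{X^{m-1-j}}$ has time derivative controlled by $\opnorm{\bm{u}}_m$), followed by absorption via $S_{\gamma,t}^*\leq\gamma^{-1}I_{\gamma,t}$. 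The paper's device is marginally cleaner because it spares you the check that Assumption \ref{ass:HOEE} at order $m$ implies the same at order $m-1$ with constants still depending only on $M_0,M_1$; your induction works too, but that verification is part of the bookkeeping you should make explicit. Similarly, the paper handles the lower-order boundary traces by the analogous time-integration trick rather than the direct trace bound you propose, though your route via $\sqrt{\varepsilon}\,\gamma^{-1/2}I_{\gamma,t}(\opnorm{\bm{u}}_m)$ and absorption is equally valid since $\varepsilon\leq1$.
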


\begin{proof}
In the following calculations, we simply denote by $C_0$ the constant depending only on $M_0$ and by $C_1$ the constant depending also on $M_1$. 
These constants may change from line to line. 
Putting $\bm{v}=\dt^{m-2}\bm{u}$, we see that $\bm{v}$ solves 
\begin{equation}\label{drLS}
\begin{cases}
 \ddot{\bm{v}}=(A\bm{v}')'+Q\bm{v}'(1,t)+\varepsilon s\dot{\bm{v}}'+\bm{f}_m &\mbox{in}\quad (0,1)\times(0,T), \\
 \bm{v}=\bm{0} &\mbox{on}\quad \{s=1\}\times(0,T),
\end{cases}
\end{equation}
where $\bm{f}_m = \dt^{m-2}\bm{f} + ([\dt^{m-2},A]\bm{u}')'+[\dt^{m-2},Q]\bm{u}'(1,t)$. 
Applying Proposition \ref{prop:BEE} we obtain 
\begin{align*}
& I_{\gamma,t}( \opnorm{\dt^{m-2}\bm{u}(\cdot)}_2 ) + \sqrt{\varepsilon}|\dt^{m-2}\bm{u}'(1,\cdot)|_{H_\gamma^1(0,t)} \\
&\leq C_0 \left\{ \opnorm{\dt^{m-2}\bm{u}(0)}_{2,*} + \|\bm{f}_m(0)\|_{L^2} + S_{\gamma,t}^*( \|\dt\bm{f}_m(\cdot)\|_{L^2} ) \right\}.
\end{align*}
Here, by the first equation in \eqref{drLS} together with Lemmas \ref{lem:embedding2} and \ref{lem:EstAu} 
we get $\|\bm{f}_m(0)\|_{L^2} \leq C_0\opnorm{\bm{u}(0)}_m$. 
By Lemma \ref{lem:Comm} we get also $\|\dt\bm{f}_m\|_{L^2}\leq \|\dt^{m-1}\bm{f}\|_{L^2} + C_1\opnorm{\bm{u}}_m$. 
These estimates and Lemma \ref{lem:Estu} imply 
\begin{align*}
& I_{\gamma,t}( \opnorm{\bm{u}(\cdot)}_m ) + \sqrt{\varepsilon}|\dt^{m-2}\bm{u}'(1,\cdot)|_{H_\gamma^1(0,t)} \\
&\leq C_0 \left\{ \opnorm{\bm{u}(0)}_m +  I_{\gamma,t}( \opnorm{\bm{f}(\cdot)}_{m-2} ) + I_{\gamma,t}( \opnorm{\bm{u}(\cdot)}_{m-1} )
 + S_{\gamma,t}^*( \|\dt^{m-1}\bm{f}(\cdot)\|_{L^2} ) \right\} \\
&\quad\;
 + C_1S_{\gamma,t}^*( \opnorm{\bm{u}(\cdot)}_m ).
\end{align*}
As was shown by Iguchi and Lannes \cite[Lemma 2.16]{IguchiLannes2021}, we have also 
\[
\begin{cases}
 I_{\gamma,t}( \opnorm{\bm{u}(\cdot)}_{m-1} )
  \leq C( \opnorm{ \bm{u}(0) }_m + S_{\gamma,t}^*( \opnorm{\bm{u}(\cdot)}_m ), \\
 |\bm{u}'(1,t)|_{H_\gamma^{m-2}(0,t)} \leq C( \gamma^{-\frac12}\opnorm{ \bm{u}(0) }_m + \gamma^{-1}|\bm{u}'(1,t)|_{H_\gamma^{m-1}(0,t)})
\end{cases}
\]
with an absolute constant $C>0$. 
Since $S_{\gamma,t}^*( \opnorm{\bm{u}(\cdot)}_m ) \leq \gamma^{-1}I_{\gamma,t}( \opnorm{\bm{u}(\cdot)}_m )$, 
by choosing $\gamma_1$ so large that $C_1\gamma_1^{-1}\ll1$ we obtain 
\begin{align*}
& I_{\gamma,t}( \opnorm{\bm{u}(\cdot)}_m ) + \sqrt{\varepsilon}|\bm{u}'(1,\cdot)|_{H_\gamma^{m-1}(0,t)} \\
&\leq C_0 \left\{ \opnorm{\bm{u}(0)}_m + I_{\gamma,t}( \opnorm{\bm{f}(\cdot)}_{m-2} )
 + S_{\gamma,t}^*( \|\dt^{m-1}\bm{f}(\cdot)\|_{L^2} ) \right\}
\end{align*}
for any $t\in[0,T]$, $\gamma\geq\gamma_1$, and $\varepsilon\in[0,1]$. 
This estimate and Lemma \ref{lem:EstIV} give the desired one. 
\end{proof}

\section{Existence of solutions I}\label{sect:Proof1}
In this section we prove Theorem \ref{Th1}. 
To this end, we first consider the initial boundary value problem to the regularized system \eqref{rLS} in the case $Q=O$, that is, 
\begin{equation}\label{rLS2}
\begin{cases}
 \ddot{\bm{u}}=(A(s,t)\bm{u}')'+\varepsilon s\dot{\bm{u}}'+\bm{f}(s,t) &\mbox{in}\quad (0,1)\times(0,T), \\
 \bm{u}=\bm{0} &\mbox{on}\quad \{s=1\}\times(0,T), \\
 (\bm{u},\dot{\bm{u}})|_{t=0}=(\bm{u}_0^{\mathrm{in}},\bm{u}_1^{\mathrm{in}}) &\mbox{in}\quad (0,1),
\end{cases}
\end{equation}
with a degenerate but smooth coefficient $A$ and a regularizing parameter $\varepsilon\in\mathbb{R}$. 
The compatibility conditions for the data $(\bm{u}_0^{\mathrm{in}},\bm{u}_1^{\mathrm{in}},\bm{f})$ can be defined similarly to Definition \ref{def:CC1}.

\begin{proposition}\label{prop:exist1}
Let $m\geq2$ be an integer and assume that $A\in C^\infty(\overline{(0,1)\times(0,T)})$ is symmetric and satisfies 
$M_0^{-1}s\mathrm{Id} \leq A(s,t) \leq M_0s\mathrm{Id}$ for any $(s,t)\in(0,1)\times(0,T)$ with a positive constant $M_0$. 
Then, for any data $\bm{u}_0^{\mathrm{in}}\in X^m$, $\bm{u}_1^{\mathrm{in}}\in X^{m-1}$, $\bm{f}\in\mathscr{X}_T^{m-2}$ satisfying 
$\dt^{m-1}\bm{f}\in L^1(0,T;L^2)$ and the compatibility conditions up to order $m-1$, there exists a unique solution 
$\bm{u}\in\mathscr{X}_T^m$ to the initial boundary value problem \eqref{rLS2}. 
If, in addition, $\varepsilon>0$, then the solution satisfies $\bm{u}'|_{s=1}\in H^{m-1}(0,T)$. 
\end{proposition}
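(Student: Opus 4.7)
The plan is to transform the degenerate one-dimensional problem \eqref{rLS2} into a non-degenerate two-dimensional problem on the unit disc $D$ via $\bm{u}^\sharp(x,t) = \bm{u}(|x|^2,t)$, and then invoke classical theory for linear second-order hyperbolic systems with Dirichlet boundary data on a smooth bounded domain. The boundary trace estimate for $\varepsilon>0$ would be pulled not from the disc formulation but from the $1$D energy argument already carried out in Propositions \ref{prop:BEE} and \ref{prop:HOEE}.

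First, I would identify the principal operator. Because $A\in C^\infty$ is uniformly comparable with $s\mathrm{Id}$, it vanishes to first order at $s=0$, so we may factor $A(s,t)=s\tilde{A}(s,t)$ with $\tilde{A}\in C^\infty(\overline{(0,1)\times(0,T)})$ symmetric and uniformly positive. Using $\nabla_x\bm{u}^\sharp=2x\bm{u}'$ and, in two dimensions, $\Delta\bm{u}^\sharp=4\bm{u}'+4|x|^2\bm{u}''$, a short direct calculation gives
\[
\bigl((A\bm{u}')'\bigr)^\sharp=\tfrac{1}{4}\mathrm{div}_x\bigl(\tilde{A}^\sharp\nabla_x\bm{u}^\sharp\bigr),\qquad (s\dot{\bm{u}}')^\sharp=\tfrac{1}{2}\,x\cdot\nabla_x\dot{\bm{u}}^\sharp,
\]
so that \eqref{rLS2} becomes
\[
\begin{cases}
\ddot{\bm{u}}^\sharp-\tfrac{1}{4}\mathrm{div}_x\bigl(\tilde{A}^\sharp\nabla_x\bm{u}^\sharp\bigr)-\tfrac{\varepsilon}{2}\,x\cdot\nabla_x\dot{\bm{u}}^\sharp=\bm{f}^\sharp & \text{in}\ D\times(0,T),\\
\bm{u}^\sharp=\bm{0} & \text{on}\ \partial D\times(0,T),\\
(\bm{u}^\sharp,\dot{\bm{u}}^\sharp)|_{t=0}=\bigl((\bm{u}_0^\mathrm{in})^\sharp,(\bm{u}_1^\mathrm{in})^\sharp\bigr) & \text{in}\ D.
\end{cases}
\]
This is a linear, uniformly hyperbolic second-order system on the smooth bounded domain $D$ with homogeneous Dirichlet boundary condition; for $\varepsilon>0$, the extra first-order term $\tfrac{\varepsilon}{2}\,x\cdot\nabla_x\dot{\bm u}^\sharp$ is a lower-order dissipative perturbation.

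Second, by Lemma \ref{lem:NormEq} the transformed data satisfy $(\bm{u}_0^\mathrm{in})^\sharp\in H^m(D)$, $(\bm{u}_1^\mathrm{in})^\sharp\in H^{m-1}(D)$, $\bm{f}^\sharp\in\bigcap_{j=0}^{m-2}C^j([0,T];H^{m-2-j}(D))$ and $\dt^{m-1}\bm{f}^\sharp\in L^1(0,T;L^2(D))$. Because the recurrence \eqref{HOIV} (with $Q=O$) is intertwined by $\cdot^\sharp$, the compatibility relations \eqref{CC1} translate into the standard Dirichlet compatibility conditions $\dt^j\bm{u}^\sharp|_{\partial D\times\{0\}}=\bm{0}$ for $j=0,1,\ldots,m-1$. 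Classical well-posedness theory for linear second-order hyperbolic systems on smooth bounded domains (Galerkin or semigroup methods) then produces a unique $\bm{u}^\sharp\in\bigcap_{j=0}^m C^j([0,T];H^{m-j}(D))$, and transporting back through Lemma \ref{lem:NormEq} yields the desired $\bm{u}\in\mathscr{X}_T^m$. Uniqueness at this regularity level also follows directly from the energy estimate of Proposition \ref{prop:HOEE} applied to the difference of two solutions.

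Finally, to obtain the additional boundary regularity $\bm{u}'|_{s=1}\in H^{m-1}(0,T)$ when $\varepsilon>0$, I would return to the one-dimensional equation. Testing $\ddot{\bm{u}}-(A\bm{u}')'-\varepsilon s\dot{\bm u}'=\bm{f}$ against $(A\dot{\bm u}')'$, integration by parts in $s$ brings out the boundary contribution $\varepsilon\bigl(A(1,t)\dot{\bm{u}}'(1,t)\bigr)\cdot\dot{\bm{u}}'(1,t)$ which, by $A(1,t)\geq M_0^{-1}\mathrm{Id}$, yields $\sqrt{\varepsilon}\,\dot{\bm{u}}'(1,\cdot)\in L^2(0,T)$; applying $\dt^j$ to the equation for $j=0,\ldots,m-2$ and repeating — exactly as in Propositions \ref{prop:BEE} and \ref{prop:HOEE} — gives $\sqrt{\varepsilon}\,\bm{u}'(1,\cdot)\in H^{m-1}(0,T)$. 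The principal obstacle I anticipate is the bookkeeping surrounding the transformation $\cdot^\sharp$: one must check carefully that the iterative formulas for $\dt^j\bm{u}^\sharp|_{t=0}$ built from the disc equation coincide with the formulas \eqref{HOIV} built from the interval equation, so that the compatibility conditions transfer cleanly between the two formulations. Once this identification is made, no genuinely new analytical ingredient is required beyond Lemma \ref{lem:NormEq} and the classical hyperbolic theory.
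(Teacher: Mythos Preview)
Your proposal is correct and follows essentially the same route as the paper: transform to the disc via $\bm u^\sharp$, apply classical hyperbolic IBVP theory to the resulting non-degenerate system, pull back via Lemma~\ref{lem:NormEq}, and read off the boundary regularity from Proposition~\ref{prop:HOEE}. The only point you leave implicit is that the disc solution must be radially symmetric in order to lie in the range of $u\mapsto u^\sharp$; this follows from uniqueness since the coefficients and data are radially symmetric, but you should state it.
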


\begin{proof}
This proposition can be proved along with the proof of Takayama \cite[Theorem 2.1]{Takayama2018} as follows. 
Let $\bm{u}$ be a solution to \eqref{rLS2} and put $\bm{U}(x,t)=\bm{u}^\sharp(x,t)=\bm{u}(x_1^2+x_2^2,t)$ for $(x,t)\in D\times(0,T)$. 
Then, the problem \eqref{rLS2} is transformed into the initial boundary value problem 
\begin{equation}\label{TrLS}
\begin{cases}
 \displaystyle
 \dt^2\bm{U} = \sum_{j=1,2}\left( \frac14 \partial_{x_j}(A_0^\sharp(x,t) \partial_{x_j}\bm{U})
  + \frac12 \varepsilon x_j\partial_{x_j}\dt\bm{U} \right) + \bm{f}^\sharp(x,t) &\mbox{in}\quad D\times(0,T), \\
 \bm{U}=\bm{0} &\mbox{on}\quad \partial D\times(0,T), \\
 (\bm{U},\dt\bm{U})|_{t=0} = (\bm{U}_0^\mathrm{in}, \bm{U}_1^\mathrm{in}) &\mbox{in}\quad D,
\end{cases}
\end{equation}
where $A_0(s,t)=\frac{1}{s}A(s,t)=(\mathscr{M}A(\cdot,t))(s)$ and $\bm{U}_j^\mathrm{in}=(\bm{u}_j^\mathrm{in})^\sharp$ for $j=1,2$. 
By Lemma \ref{lem:NormEq}, we see that $\bm{U}_0^\mathrm{in}\in H^m(D)$, $\bm{U}_1^\mathrm{in}\in H^{m-1}(D)$, 
$\bm{f}^\sharp\in \bigcap_{j=1}^{m-2}C^j([0,T];H^{m-2-j}(D))$ satisfy $\dt^{m-1}\bm{f}^\sharp \in L^1(0,T;L^2(D))$ and compatibility conditions up to order $m-1$. 
Since the coefficient matrix $A_0^\sharp$ is strictly positive, it is classical to show the existence of a unique solution 
$\bm{U} \in \bigcap_{j=0}^mC^j([0,T];H^{m-j}(D))$ to \eqref{TrLS}, which is radially symmetric; 
for a general theory of initial boundary value problems of hyperbolic systems, see, for example, 
Benzoni and Serre \cite[Chapter 9]{BenzoniSerre2007}, M\'etivier \cite[Chapter 2]{Metivier2004}, 
Rauch and Massey \cite[Theorem 3.1]{RauchMassey1974}, and Schochet \cite[Theorem A1]{Schochet1986}. 
Therefore, we can defined $\bm{u}(s,t)$ by $\bm{u}^\sharp=\bm{U}$. 
Then, by Lemma \ref{lem:NormEq} we see that $\bm{u}\in\mathscr{X}_T^m$ and that $\bm{u}$ is a unique solution to \eqref{rLS2}. 
Moreover, by Proposition \ref{prop:HOEE} we have $\bm{u}'(1,\cdot)\in H^{m-1}(0,T)$ if $\varepsilon>0$. 
\end{proof}

We then consider the problem \eqref{rLS} with a localized term $Q(s,t)\bm{u}'(1,t)$. 
We still assume that the coefficient matrices $A$ and $Q$ are both smooth. 
Here, $\varepsilon \in(0,1]$ is fixed so that we denote the initial data by $(\bm{u}_0^{\mathrm{in}},\bm{u}_1^{\mathrm{in}})$.

\begin{proposition}\label{prop:exist2}
Let $m\geq2$ be an integer $\varepsilon\in(0,1]$ and assume that $A,Q\in C^\infty(\overline{(0,1)\times(0,T)})$ and that $A(s,t)$ is symmetric and satisfies 
$M_0^{-1}s\mathrm{Id} \leq A(s,t) \leq M_0s\mathrm{Id}$ for any $(s,t)\in(0,1)\times(0,T)$ with a positive constant $M_0$. 
Then, for any data $\bm{u}_0^{\mathrm{in}}\in X^m$, $\bm{u}_1^{\mathrm{in}}\in X^{m-1}$, $\bm{f}\in\mathscr{X}_T^{m-2}$ satisfying 
$\dt^{m-1}\bm{f}\in L^1(0,T;L^2)$ and the compatibility conditions up to order $m-1$, there exists a unique solution 
$\bm{u}\in\mathscr{X}_T^m$ to the initial boundary value problem \eqref{rLS} satisfying $\bm{u}'|_{s=1}\in H^{m-1}(0,T)$. 
\end{proposition}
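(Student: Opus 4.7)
\textbf{Proof plan for Proposition \ref{prop:exist2}.}
The plan is to reduce the localized term $Q(s,t)\bm{u}'(1,t)$ to a forcing term by a Picard iteration, exploiting the fact that the regularizing term $\varepsilon s\dot{\bm{u}}'$ with $\varepsilon>0$ produces boundary regularity for each iterate via Proposition \ref{prop:exist1}. First, from the data $(\bm{u}_0^\mathrm{in},\bm{u}_1^\mathrm{in},\bm{f})$ I would compute the full tower of formal initial values $\{\bm{u}_j^\mathrm{in}\}_{j=0}^m$ via the recursion \eqref{HOIV} associated with \eqref{rLS}; by Lemma \ref{lem:EstIV} we have $\bm{u}_j^\mathrm{in}\in X^{m-j}$, and by hypothesis $\bm{u}_j^\mathrm{in}(1)=0$ for $j\le m-1$. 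I would then set
\[
\bm{u}^{(0)}(s,t) = \sum_{j=0}^m \frac{t^j}{j!}\,\bm{u}_j^\mathrm{in}(s),
\]
and define $\bm{u}^{(n+1)}$ inductively as the solution supplied by Proposition \ref{prop:exist1} to problem \eqref{rLS2} with initial data $(\bm{u}_0^\mathrm{in},\bm{u}_1^\mathrm{in})$ and forcing $\bm{f}_n(s,t) := \bm{f}(s,t) + Q(s,t)(\bm{u}^{(n)})'(1,t)$.

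Since $\varepsilon>0$, Proposition \ref{prop:exist1} yields $(\bm{u}^{(n)})'|_{s=1}\in H^{m-1}(0,T)\hookrightarrow C^{m-2}([0,T])$, so that $Q(\bm{u}^{(n)})'(1,\cdot)$, whose $s$-derivatives fall only on the smooth $Q$, lies in $\mathscr{X}_T^{m-2}$ with $\dt^{m-1}$ of it in $L^2(0,T;L^2)\subset L^1(0,T;L^2)$; hence $\bm{f}_n$ satisfies the hypotheses of Proposition \ref{prop:exist1}. A direct induction based on \eqref{HOIV} shows that $\dt^j\bm{u}^{(n)}|_{t=0}=\bm{u}_j^\mathrm{in}$ for $0\le j\le m$ and every $n$, so the compatibility conditions up to order $m-1$ propagate through the iteration and every step is well posed.

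For the contraction I set $\bm{w}^{(n)}:=\bm{u}^{(n+1)}-\bm{u}^{(n)}$. The identical-initial-value property forces $\dt^j\bm{w}^{(n)}|_{t=0}=0$ for $0\le j\le m$, and $\bm{w}^{(n)}$ solves \eqref{rLS2} with zero data and forcing $Q(\bm{w}^{(n-1)})'(1,\cdot)$. Proposition \ref{prop:HOEE} then gives
\[
\sqrt{\varepsilon}\,|(\bm{w}^{(n)})'(1,\cdot)|_{H_\gamma^{m-1}(0,t)} \le C_0\bigl\{ I_{\gamma,t}(\opnorm{Q(\bm{w}^{(n-1)})'(1,\cdot)}_{m-2}) + S_{\gamma,t}^*(\|\dt^{m-1}(Q(\bm{w}^{(n-1)})'(1,\cdot))\|_{L^2})\bigr\}.
\]
Using the smoothness of $Q$, the Leibniz rule, and the time-Hardy estimate $|\dt^j(\bm{w}^{(n-1)})'(1,\cdot)|_{L_\gamma^2}\lesssim \gamma^{-1}|\dt^{j+1}(\bm{w}^{(n-1)})'(1,\cdot)|_{L_\gamma^2}$ for $0\le j\le m-2$, which is made available by the vanishing of $(\bm{w}^{(n-1)})'(1,\cdot)$ and its time derivatives at $t=0$, together with \eqref{propSstar}, the right-hand side is bounded by $C(Q,M_0)\gamma^{-1/2}|(\bm{w}^{(n-1)})'(1,\cdot)|_{H_\gamma^{m-1}(0,t)}$. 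Choosing $\gamma$ large (depending on $\varepsilon$) produces a contraction factor $\tfrac12$, so the iterates converge in the boundary norm, and the full energy inequality of Proposition \ref{prop:HOEE} then transfers convergence to $\mathscr{X}_T^m$. The limit $\bm{u}$ is the sought solution of \eqref{rLS}, and uniqueness follows from the same estimate applied to the difference of two solutions sharing the initial data.

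The main obstacle I anticipate is the initial-value bookkeeping that underlies the time-Hardy gain: the $1/\sqrt{\varepsilon}$ sitting on the left-hand side of the boundary estimate must be absorbed by a factor of $\gamma^{-1/2}$ produced from the vanishing of the differences at $t=0$, which forces one to prescribe all $m+1$ time derivatives of $\bm{u}^{(0)}$ matching \eqref{HOIV} for the original problem \eqref{rLS} and to verify that this entire tower of vanishing conditions propagates to every $\bm{w}^{(n)}$.
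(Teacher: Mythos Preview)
Your overall strategy---Picard iteration that treats the localized term as forcing, with the $\varepsilon>0$ regularization supplying the boundary regularity needed to close the loop---is exactly the paper's, and your contraction estimate via Proposition~\ref{prop:HOEE} is carried out correctly.

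There is, however, a gap at the very first step. Your polynomial
\[
\bm u^{(0)}(s,t)=\sum_{j=0}^m \frac{t^j}{j!}\,\bm u_j^{\mathrm{in}}(s)
\]
does not have a well-defined trace $(\bm u^{(0)})'(1,t)$, so the forcing $\bm f_0=\bm f+Q(\bm u^{(0)})'(1,\cdot)$ is ill-defined and the iteration cannot start. The problem is the two top coefficients: $\bm u_{m-1}^{\mathrm{in}}\in X^1$ only guarantees $s^{1/2}(\bm u_{m-1}^{\mathrm{in}})'\in L^2$, and $\bm u_m^{\mathrm{in}}\in X^0=L^2$; neither admits a pointwise value of its $s$-derivative at $s=1$. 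This is precisely the difficulty you did \emph{not} anticipate in your final paragraph.

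The paper resolves it by a two-stage argument. First, one imposes one extra degree of regularity on the data ($\bm u_0^{\mathrm{in}}\in X^{m+1}$, $\bm u_1^{\mathrm{in}}\in X^m$, $\bm f\in\mathscr X_T^{m-1}$), so that an initial iterate $\bm u^{(0)}\in\mathscr X_T^{m+1}$ can be built with $(\dt^j\bm u^{(0)})|_{t=0}=\bm u_j^{\mathrm{in}}$ for $j\le m+1$; since then $\dt^j\bm u^{(0)}\in C([0,T];X^2)$ for $j\le m-1$, the trace $(\bm u^{(0)})'|_{s=1}\in H^{m-1}(0,T)$ is automatic and your iteration runs. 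Second, the general case is recovered by approximating the data by more regular data satisfying the compatibility conditions (\`a la Rauch--Massey) and passing to the limit using the energy estimate of Proposition~\ref{prop:HOEE}, which is uniform in the approximation. An alternative lighter fix, which you could have used, is to start the iteration not from a full $\bm u^{(0)}$ but from a prescribed boundary trace $b^{(0)}(t)=\sum_{j\le m-2}\tfrac{t^j}{j!}(\bm u_j^{\mathrm{in}})'(1)\in C^\infty([0,T])$, since the scheme only ever uses $(\bm u^{(n)})'(1,\cdot)$ and the compatibility check needs at most $m-2$ time derivatives of it at $t=0$; but that is not what you wrote.
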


\begin{proof}
We first consider the case where the data satisfy additional regularities 
$\bm{u}_0^{\mathrm{in}}\in X^{m+1}$, $\bm{u}_1^{\mathrm{in}}\in X^m$, and $\bm{f}\in\mathscr{X}_T^{m-1}$. 
Let $\bm{u}$ be a smooth solution to \eqref{rLS} and put $\bm{u}_j^\mathrm{in}=(\dt^j\bm{u})|_{t=0}$ for $j=0,1,\ldots,m+1$. 
Then, $\{\bm{u}_j^\mathrm{in}\}_{j=0}^{m+1}$ are calculated from the data by a similar recurrence formula to \eqref{HOIV} and satisfy 
$\bm{u}_j^\mathrm{in} \in X^{m+1-j}$ for $j=0,1,\ldots,m+1$. 
Therefore, we can construct $\bm{u}^{(0)} \in \mathscr{X}_T^{m+1}$ which satisfies $(\dt^j\bm{u}^{(0)})|_{t=0}=\bm{u}_j^\mathrm{in}$ for $j=0,1,\ldots,m+1$. 
Particularly, we have $(\ds\bm{u}^{(0)})|_{s=1}\in H^{m-1}(0,T)$. 
We proceed to construct a sequence of approximate solutions $\{\bm{u}^{(n)}\}_{n=0}^\infty$. 
Suppose that $\bm{u}^{(n)}\in\mathscr{X}_T^m$ is given so that 
\begin{equation}\label{CondSA}
\begin{cases}
 (\dt^j\bm{u}^{(n)})|_{t=0}=\bm{u}_j^\mathrm{in} \quad\mbox{for}\quad j=0,1,\ldots,m, \\
 (\ds\bm{u}^{(n)})|_{s=1}\in H^{m-1}(0,T),
\end{cases}
\end{equation}
and consider the initial boundary value problem 
\[
\begin{cases}
 \ddot{\bm{v}}=(A(s,t)\bm{v}')'+\varepsilon s\dot{\bm{v}}'+\bm{f}^{(n)}(s,t) &\mbox{in}\quad (0,1)\times(0,T), \\
 \bm{v}=\bm{0} &\mbox{on}\quad \{s=1\}\times(0,T), \\
 (\bm{v},\dot{\bm{v}})|_{t=0}=(\bm{u}_0^{\mathrm{in}},\bm{u}_1^{\mathrm{in}}) &\mbox{in}\quad (0,1),
\end{cases}
\]
where $\bm{f}^{(n)}=Q(\ds\bm{u}^{(n)}|_{s=1})+\bm{f}$. 
It is easy to see that $\bm{f}^{(n)}\in\mathscr{X}_T^{m-2}$, $\dt^{m-1}\bm{f}^{(n)}\in L^1(0,T;L^2)$, 
and that the data $(\bm{u}_0^{\mathrm{in}},\bm{u}_1^{\mathrm{in}},\bm{f}^{(n)})$ satisfy the compatibility conditions up to order $m-1$. 
Therefore, by Proposition \ref{prop:exist1} the above problem has a unique solution $\bm{v}\in\mathscr{X}_T^m$ satisfying \eqref{CondSA}. 
Denoting this solution by $\bm{u}^{(n+1)}$, we have constructed the approximate solutions $\{\bm{u}^{(n)}\}_{n=0}^\infty$. 
In order to see a convergence of these approximate solutions, we put $\bm{v}^{(n)}=\bm{u}^{(n+1)}-\bm{u}^{(n)}$, which solves 
\[
\begin{cases}
 \ddot{\bm{v}}^{(n+1)}=(A(s,t)\bm{v}^{(n+1)\prime})'+\varepsilon s\dot{\bm{v}}^{(n+1)\prime}+Q(s,t)\bm{v}^{(n)\prime}(1,t) &\mbox{in}\quad (0,1)\times(0,T), \\
 \bm{v}^{(n+1)}=\bm{0} &\mbox{on}\quad \{s=1\}\times(0,T), \\
 (\bm{v}^{(n+1)},\dot{\bm{v}}^{(n+1)})|_{t=0}=(\bm{0},\bm{0}) &\mbox{in}\quad (0,1).
\end{cases}
\]
By Proposition \ref{prop:HOEE}, we see that 
\begin{align*}
& I_{\gamma,T}( \opnorm{ \bm{v}^{(n+1)}(\cdot) }_m ) + \sqrt{\varepsilon}|\bm{v}^{(n+1)\prime}(1,\cdot)|_{H_\gamma^{m-1}(0,T)} \\
&\lesssim I_{\gamma,T}( \opnorm{ Q(\bm{v}^{(n)\prime}|_{s=1}) }_{m-2} ) + S_{\gamma,T}^*( \|\dt^{m-1} (Q(\bm{v}^{(n)\prime}|_{s=1}))\|_{L^2} ) \\
&\lesssim \sum_{j=0}^{m-2} I_{\gamma,T}( |\dt^j\bm{v}^{(n)\prime}(1,\cdot)| ) + \sum_{j=0}^{m-1} S_{\gamma,T}^*( |\dt^j\bm{v}^{(n)\prime}(1,\cdot)| ) \\
&\lesssim \gamma^{-\frac12}|\bm{v}^{(n)\prime}(1,\cdot)|_{H_\gamma^{m-1}(0,T)},
\end{align*}
where we used $I_{\gamma,t}(|u|) \leq C(|u(0)|+S_{\gamma,t}^*(|\dt u|)$ and \eqref{propSstar}; see \cite[Lemma 2.16]{IguchiLannes2021}. 
Therefore, by choosing $\gamma$ so large that $\gamma^{-\frac12}\ll\sqrt{\varepsilon}$ we obtain 
\[
I_{\gamma,T}( \opnorm{ \bm{v}^{(n+1)}(\cdot) }_m ) + \sqrt{\varepsilon}|\bm{v}^{(n+1)\prime}(1,\cdot)|_{H_\gamma^{m-1}(0,T)}
\leq \frac12\sqrt{\varepsilon}|\bm{v}^{(n)\prime}(1,\cdot)|_{H_\gamma^{m-1}(0,T)}
\]
for any $n=0,1,\ldots$. 
This ensures that $\{\bm{u}^{(n)}\}_{n=0}^\infty$ and $\{\bm{u}^{(n)\prime}|_{s=1}\}_{n=0}^\infty$ converge in $\mathscr{X}_T^m$ and $H^{m-1}(0,T)$, 
respectively, so that the limit $\bm{u}$ is the desired solution.

We then consider the case without any additional regularities on the data. 
By using the method in \cite{RauchMassey1974} we can construct a sequence of approximate data 
$\{(\bm{u}_0^{\mathrm{in}(n)},\bm{u}_1^{\mathrm{in}(n)},\bm{f}^{(n)})\}_{n=1}^\infty$, which satisfy the additional regularities 
$\bm{u}_0^{\mathrm{in}(n)} \in X^{m+1}$, $\bm{u}_1^{\mathrm{in}(n)} \in X^m$, $\bm{f}^{(n)} \in \mathscr{X}_T^{m-1}$, 
and compatibility conditions up to order $m-1$, and converge to the original data $(\bm{u}_0^{\mathrm{in}},\bm{u}_1^{\mathrm{in}},\bm{f})$ 
in the corresponding spaces stated in the proposition. 
Then, for each $n\in\mathbb{N}$ there exists a unique solution $\bm{u}^{(n)} \in \mathscr{X}_T^m$ to the problem corresponding to the approximate data. 
By the linearity of the problem and by Proposition \ref{prop:HOEE}, 
we see that $\{\bm{u}^{(n)}\}_{n=0}^\infty$ and $\{\bm{u}^{(n)\prime}|_{s=1}\}_{n=0}^\infty$ converge in $\mathscr{X}_T^m$ and $H^{m-1}(0,T)$, respectively, 
so that the limit $\bm{u}$ is the desired solution. 
\end{proof}

We are ready to prove one of our main result in this paper, that is, Theorem \ref{Th1}.

\begin{proof}[Proof of Theorem \ref{Th1}]
Once a solution $\bm{u}\in\mathscr{X}_T^m$ to the problem \eqref{LS} is obtained, the energy estimate \eqref{EE1} follows from Proposition \ref{prop:HOEE}. 
Since Assumption \ref{ass:HOEE} (iii) is just imposed to exhibit how the constants $C_0$ and $\gamma_1$ in \eqref{EE1} depends on norms of the coefficients $A$ and $Q$, 
it is sufficient to show the existence of a solution $\bm{u}\in\mathscr{X}_T^m$ under Assumptions \ref{ass:BEE} and \ref{ass:HOEE} (i)--(ii). 
The proof consists of 4 steps and proceeds in a similar way as the proof of \cite[Theorem A1]{Schochet1986}.

\medskip
\noindent
\textbf{Step 1.} 
We assume additionally that $A,Q\in C^\infty(\overline{(0,1)\times(0,T)})$ and that the data satisfy additional regularities 
$\bm{u}_0^{\mathrm{in}}\in X^{m+1}$, $\bm{u}_1^{\mathrm{in}}\in X^m$, $\bm{f}\in\mathscr{X}_T^m$, 
and compatibility conditions up to order $m$ to the problem \eqref{LS}. 
Let $0<\varepsilon\leq1$ and consider the regularized problem \eqref{rLS}. 
We note that the data $(\bm{u}_0^{\mathrm{in}},\bm{u}_1^{\mathrm{in}},\bm{f})$ do not necessarily satisfy the compatibility conditions 
to the regularized problem \eqref{rLS}. 
However, by using the method in \cite{RauchMassey1974} we can construct initial data 
$(\bm{u}_0^{\mathrm{in},\varepsilon},\bm{u}_1^{\mathrm{in},\varepsilon}) \in X^{m+1}\times X^m$ of the problem \eqref{rLS} so that 
the modified data $(\bm{u}_0^{\mathrm{in},\varepsilon},\bm{u}_1^{\mathrm{in},\varepsilon},\bm{f})$ satisfy compatibility conditions up to order $m$ 
and that the modified initial data converge to the original ones in $X^{m+1}\times X^m$ as $\varepsilon\to +0$. 
Then, by Proposition \ref{prop:exist2} there exists a unique solution $\bm{u}^\varepsilon \in \mathscr{X}_T^{m+1}$ of the regularized problem \eqref{rLS}. 
Moreover, by Proposition \ref{prop:HOEE} the solutions $\{\bm{u}^\varepsilon\}_{0<\varepsilon\leq1}$ satisfy the uniform bound 
$\opnorm{ \bm{u}^\varepsilon(t) }_m \leq C$ for any $t\in[0,T]$ and $\varepsilon\in(0,1]$ with a constant $C$ independent of $t$ and $\varepsilon$. 
In order to see the convergence of these solutions as $\varepsilon\to+0$, we put $\bm{v}^{\varepsilon,\eta}=\bm{u}^\varepsilon-\bm{u}^\eta$, 
which solves 
\[
\begin{cases}
 \ddot{\bm{v}}^{\varepsilon,\eta}=(A(s,t)\bm{v}^{\varepsilon,\eta \prime})'+Q(s,t)\bm{v}^{\varepsilon,\eta \prime}(1,t)
  + \bm{f}^{\varepsilon,\eta}(s,t) &\mbox{in}\quad (0,1)\times(0,T), \\
 \bm{v}^{\varepsilon,\eta}=\bm{0} &\mbox{on}\quad \{s=1\}\times(0,T), \\
 (\bm{v}^{\varepsilon,\eta},\dot{\bm{v}}^{\varepsilon,\eta})|_{t=0}
  =(\bm{u}_0^{\mathrm{in},\varepsilon}-\bm{u}_0^{\mathrm{in},\eta},\bm{u}_1^{\mathrm{in},\varepsilon}-\bm{u}_1^{\mathrm{in},\eta}) &\mbox{in}\quad (0,1),
\end{cases}
\]
where $\bm{f}^{\varepsilon,\eta} = \varepsilon s\dot{\bm{u}}^{\varepsilon \prime} - \eta s\dot{\bm{u}}^{\eta \prime}$. 
Therefore, by Proposition \ref{prop:HOEE} and Lemma \ref{lem:embedding2} we obtain 
\begin{align*}
I_{\gamma,T}( \opnorm{ (\bm{u}^\varepsilon-\bm{u}^\eta)(\cdot) }_m )
&\lesssim \|\bm{u}_0^{\mathrm{in},\varepsilon}-\bm{u}_0^{\mathrm{in},\eta}\|_{X^m} + \|\bm{u}_1^{\mathrm{in},\varepsilon}-\bm{u}_1^{\mathrm{in},\eta}\|_{X^{m-1}}
 + I_{\gamma,T}( \opnorm{ \bm{f}^{\varepsilon,\eta}(\cdot) }_{m-1} ) \\
&\lesssim \|\bm{u}_0^{\mathrm{in},\varepsilon}-\bm{u}_0^{\mathrm{in},\eta}\|_{X^m} + \|\bm{u}_1^{\mathrm{in},\varepsilon}-\bm{u}_1^{\mathrm{in},\eta}\|_{X^{m-1}} \\
&\quad\;
 + \varepsilon I_{\gamma,T}( \opnorm{ \bm{u}^\varepsilon(\cdot) }_m ) + \eta I_{\gamma,T}( \opnorm{ \bm{u}^\eta(\cdot) }_m ) \\
&\to 0 \quad\mbox{as}\quad \varepsilon,\eta\to+0,
\end{align*}
which shows that $\{\bm{u}^\varepsilon\}_{0<\varepsilon\leq1}$ 
converges in $\mathscr{X}_T^m$ and the limit $\bm{u}$ is the desired solution.

\medskip
\noindent
\textbf{Step 2.} 
We still assume that $A,Q\in C^\infty(\overline{(0,1)\times(0,T)})$ but do not assume any additional regularities on the data 
$(\bm{u}_0^\mathrm{in},\bm{u}_1^\mathrm{in},\bm{f})$. 
Then, as before we can construct a sequence of regular approximate data $\{(\bm{u}_0^{\mathrm{in}(n)},\bm{u}_1^{\mathrm{in}(n)},\bm{f}^{(n)})\}_{n=1}^\infty$, 
which satisfies the additional regularities stated in Step 1 
and converges to the original data in the corresponding spaces. 
Then, by the result in Step 1, for each $n\in\mathbb{N}$ there exists a unique solution $\bm{u}^{(n)}\in\mathscr{X}_T^m$ 
to the problem corresponding to the approximate data. 
By the linearity of the problem and by Proposition \ref{prop:HOEE}, 
we see that $\{\bm{u}^{(n)}\}_{n=0}^\infty$ converges in $\mathscr{X}_T^m$, so that the limit $\bm{u}$ is the desired solution.

\medskip
\noindent
\textbf{Step 3.} 
We will prove Theorem \ref{Th1} in the case $m\geq3$ without any additional regularities on the coefficients and the data. 
We first approximate the coefficient matrices $A$ and $Q$ by smooth ones $\{A^{(n)}\}_{n=1}^\infty$ and $\{Q^{(n)}\}_{n=1}^\infty$, which satisfy 
$A^{(n)},Q^{(n)}\in C^\infty(\overline{(0,1)\times(0,T)})$ and conditions in Assumptions \ref{ass:BEE} and \ref{ass:HOEE} 
with $M_0$ and $M_1$ replaced by $2M_0$ and $2M_1$, respectively. 
Moreover, $\{A^{(n)\prime}\}_{n=1}^\infty$ and $\{Q^{(n)}\}_{n=1}^\infty$ converge to $A'$ and $Q$ 
in $\mathscr{X}_T^{m-2} \cap \mathscr{X}_T^{2,*}$ and $\mathscr{X}_T^{m-2}$, respectively. 
We then consider the initial boundary value problem 
\begin{equation}\label{LSapp}
\begin{cases}
 \ddot{\bm{u}}=(A^{(n)}(s,t)\bm{u}')'+Q^{(n)}(s,t)\bm{u}'(1,t)+\bm{f}(s,t) &\mbox{in}\quad (0,1)\times(0,T), \\
 \bm{u}=\bm{0} &\mbox{on}\quad \{s=1\}\times(0,T), \\
 (\bm{u},\dot{\bm{u}})|_{t=0}=(\bm{u}_0^{\mathrm{in}(n)},\bm{u}_1^{\mathrm{in}(n)}) &\mbox{in}\quad (0,1),
\end{cases}
\end{equation}
where the initial data $(\bm{u}_0^{\mathrm{in}(n)},\bm{u}_1^{\mathrm{in}(n)}) \in X^m\times X^{m-1}$ can be constructed so that the data 
$(\bm{u}_0^{\mathrm{in}(n)},\bm{u}_1^{\mathrm{in}(n)}, \bm{f})$ for the above problem satisfy the compatibility conditions up to order $m-1$ and 
converge to $(\bm{u}_0^{\mathrm{in}},\bm{u}_1^{\mathrm{in}})$ in $X^m\times X^{m-1}$ as $n\to\infty$. 
Then, by the result in Step 2, for each $n\in\mathbb{N}$ the above problem has a unique solution $\bm{u}^{(n)}\in\mathscr{X}_T^m$. 
Moreover, by Proposition \ref{prop:HOEE} these solutions satisfy the uniform bound $\opnorm{ \bm{u}^{(n)}(t) }_m \leq C$ for any $t\in[0,T]$ and $n\in\mathbb{N}$ 
with a constant $C$ independent of $t$ and $n$. 
On the other hand, by Lemma \ref{lem:NormEq} we see that the embedding $X^{j+1} \hookrightarrow X^j$ is compact so that by the Aubin--Lions lemma 
the embedding $\mathscr{X}_T^m \hookrightarrow \mathscr{X}_T^{m-1}$ is also compact. 
Therefore, $\{\bm{u}^{(n)}\}_{n=1}^\infty$ has a subsequence which converges $\bm{u}$ in $\mathscr{X}_T^{m-1}$. 
Obviously, $\bm{u}$ is a unique solution to \eqref{LS}; we note here that this is the only place where the case $m=2$ is excluded. 
As a result, without taking a subsequence, $\{\bm{u}^{(n)}\}_{n=1}^\infty$ itself converges $\bm{u}$ in $\mathscr{X}_T^{m-1}$. 
Moreover, by standard compactness arguments we have also 
\[
\dt^j\bm{u}\in L^\infty(0,T;X^{m-j}) \cap C_\mathrm{w}([0,T];X^{m-j})
\]
for $j=0,1,\ldots,m$. 
It remains to show that this weak continuity in time can be replaced by the strong continuity. 
To this end, we use the technique used by Majda \cite[Chapter 2.1]{Majda1984} and Majda and Bertozzi \cite[Chapter 3.2]{MajdaBertozzi2002}, 
that is, we make use of the energy estimate. 
For the approximate solution $\bm{u}^{(n)}$, we define an energy functional $\mathscr{E}_m^{(n)}(t)$ by 
\begin{align*}
\mathscr{E}_m^{(n)}(t)
&= (A^{(n)}\dt^{m-1}\bm{u}^{(n)\prime},\dt^{m-1}\bm{u}^{(n)\prime})_{L^2} + \|(A^{(n)}\dt^{m-2}\bm{u}^{(n)\prime})'\|_{L^2}^2 \\
&\quad\;
 + 2((A^{(n)}\dt^{m-2}\bm{u}^{(n)\prime})',sQ^{(n)}\dt^{m-2}\bm{u}^{(n)\prime}+\bm{f}_m^{(n)})_{L^2},
\end{align*}
where $\bm{f}_m^{(n)}=\dt^{m-2}\bm{f}+([\dt^{m-2},A^{(n)}]\bm{u}^{(n)\prime})'+[\dt^{m-2},Q^{(n)}]\bm{u}^{(n)\prime}(1,t)$. 
Then, as in the proof of Propositions \ref{prop:BEE} and \ref{prop:HOEE} we obtain 
$\mathscr{E}_m^{(n)}(t)=\mathscr{E}_m^{(n)}(t_0)+\int_{t_0}^t F_m^{(n)}(t')\mathrm{d}t'$, 
where $F_m^{(n)}$ satisfies $|F_m^{(n)}(t)| \leq C(\|(\dt^{m-1}\bm{f},\dt^{m-2}\bm{f})(t)\|_{L^2} + 1)$ with a constant $C$ independent of $n$ and $t$. 
Passing to the limit $n\to\infty$ to this energy identity, we see that the corresponding energy functional $\mathscr{E}_m(t)$ for the solution $\bm{u}$ 
is continuous in $t$. 
This fact together with the weak continuity implies that $\dt^{m-1}\bm{u}\in C([0,T];X^1)$ and $\dt^{m-2}\bm{u}\in C([0,T];X^2)$. 
Then, by using the hyperbolic system we obtain $\dt^m\bm{u} \in C([0,T];L^2)$. 
Finally, as in the proof of Lemma \ref{lem:Estu} we can show $\dt^{m-j}\bm{u}\in C([0,T];X^j)$ inductively on $j=3,4,\ldots,m$.

\medskip
\noindent
\textbf{Step 4.} 
We will prove Theorem \ref{Th1} in the case $m=2$. 
We first note that the conditions in Assumptions \ref{ass:BEE} and \ref{ass:HOEE} (i)--(ii) in the cases $m=2$ and $m=3$ are exactly the same. 
As before, we approximate the data $(\bm{u}_0^\mathrm{in}, \bm{u}_1^\mathrm{in},\bm{f})$ by a sequence of more regular data 
$\{(\bm{u}_0^{\mathrm{in}(n)},\bm{u}_1^{\mathrm{in}(n)},\bm{f}^{(n)})\}_{n=1}^\infty$, 
which satisfies the conditions in the case $m=3$ 
and converges to the original data in the corresponding spaces. 
Then, by the result in Step 3, for each $n\in\mathbb{N}$ there exists a unique solution $\bm{u}^{(n)}\in\mathscr{X}_T^3$ 
to the problem corresponding to the approximate data. 
By the linearity of the problem and by Proposition \ref{prop:BEE}, 
we see that $\{\bm{u}^{(n)}\}_{n=0}^\infty$ converges in $\mathscr{X}_T^2$, so that the limit $\bm{u}$ is the desired solution. 
\end{proof}

\section{Two-point boundary value problem}\label{sect:TBVP}
We proceed to consider the linearized system \eqref{LEq} and \eqref{LBVP} for the motion of an inextensible hanging string. 
The solution $\nu$ of the two-point boundary value problem \eqref{LBVP} can be decomposed as a sum of a principal part $\nu_\mathrm{p}$ 
and a lower order part $\nu_\mathrm{l}$. 
The principal part $\nu_\mathrm{p}$ can be written explicitly as \eqref{pnu}, so that the lower order part $\nu_\mathrm{l}$ satisfies 
\begin{equation}\label{lnu}
\begin{cases}
 -\nu_\mathrm{l}''+|\bm{x}''|^2\nu_\mathrm{l} = 2\dot{\bm{x}}'\cdot\dot{\bm{y}}' - 2(\bm{x}''\cdot\bm{y}'')\tau + h &\mbox{in}\quad (0,1)\times(0,T), \\
 \nu_\mathrm{l} = 0 &\mbox{on}\quad \{s=0\}\times(0,T), \\
 \nu_\mathrm{l}' = -2\dot{\bm{x}}'\cdot\dot{\bm{y}} + 2(\bm{x}''\cdot\bm{y}')\tau &\mbox{on}\quad \{s=1\}\times(0,T).
\end{cases}
\end{equation}
Note that from \eqref{pnu} and \eqref{defphi}, 
the boundary condition of $\nu_\mathrm{l}$ on $\{s=1\}\times(0,T)$ is naturally $\nu_\mathrm{l}' = 2(\bm{x}''\cdot\bm{y}')\tau$. 
However, this boundary condition can be written as the last boundary condition in \eqref{lnu}, since $\bm{y}=\bm{0}$ on $\{s=1\}\times(0,T)$, 
which comes from \eqref{LEq}. 
Here, we adopt \eqref{lnu} to facilitate later analysis. 
In view of \eqref{lnu} and \eqref{defphi}, we first consider the two-point boundary value problem 
\begin{equation}\label{TBVP}
\begin{cases}
 -\nu''+|\bm{x}''|^2\nu = h \quad\mbox{in}\quad (0,1), \\
 \nu(0)=0, \quad \nu'(1)=a,
\end{cases}
\end{equation}
where $h$ is a given function and $a$ is a constant.

\begin{lemma}[{\cite[Lemma 3.7]{IguchiTakayama2023}}]\label{lem:EstSolBVP3}
For any $M>0$ there exists a constant $C=C(M)>0$ such that if $\|s^{\frac12}\bm{x}''\|_{L^2} \leq M$, then the solution $\nu$ 
to the boundary value problem \eqref{TBVP} satisfies 
\[
\|s^\alpha\nu'\|_{L^p} \leq C(|a|+\|s^{\alpha+\frac{1}{p}}h\|_{L^1})
\]
for any $p\in[1,\infty]$ and any $\alpha\geq0$ satisfying $\alpha+\frac{1}{p}\leq1$. 
\end{lemma}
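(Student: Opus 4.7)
The plan is to combine a pointwise comparison bound on $\nu$ with the integrated form of the ODE, and then extract the weighted $L^p$ estimate via Minkowski's integral inequality. The hypothesis $\|s^{1/2}\bm{x}''\|_{L^2}\leq M$ enters through the weighted bound $\int_0^1 \sigma^\beta|\bm{x}''|^2\,\mathrm{d}\sigma \leq M^2$ for $\beta\in[0,1]$, and this is the main feature to exploit.

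First I would establish the pointwise bound $|\nu(s)|\leq v(s):= s|a| + \int_0^1\min(s,\sigma)|h(\sigma)|\,\mathrm{d}\sigma$, where $v$ solves the reference BVP $-v''=|h|$, $v(0)=0$, $v'(1)=|a|$ and hence satisfies $v\geq 0$. For $w:= v\pm\nu$ one has $-w''+|\bm{x}''|^2 w = (|h|\pm h)+|\bm{x}''|^2 v \geq 0$, $w(0)=0$, and $w'(1)\geq 0$; testing against the negative part $w_-$ and integrating by parts yields
\[
\int_0^1 |(w_-)'|^2\,\mathrm{d}s + \int_0^1 |\bm{x}''|^2 w_-^2\,\mathrm{d}s \leq w'(1)w_-(1)\leq 0,
\]
forcing $w_-\equiv 0$. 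Integrating the ODE from $s$ to $1$ and using $\nu'(1)=a$ then gives
\[
\nu'(s) = a + \int_s^1 h(\sigma)\,\mathrm{d}\sigma - \int_s^1 |\bm{x}''(\sigma)|^2\nu(\sigma)\,\mathrm{d}\sigma.
\]

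The first two contributions to $\|s^\alpha\nu'\|_{L^p}$ are then handled cleanly: $\|s^\alpha a\|_{L^p}\lesssim |a|$ and, by Minkowski in the $\sigma$-integration, $\bigl\|s^\alpha \int_s^1|h(\sigma)|\,\mathrm{d}\sigma\bigr\|_{L^p}\lesssim \|s^{\alpha+1/p}h\|_{L^1}$. For the nonlocal term I would substitute $|\nu|\leq v$; the resulting $|a|$-piece is bounded by $|a|\cdot\bigl\|s^\alpha \int_s^1 \sigma|\bm{x}''|^2\,\mathrm{d}\sigma\bigr\|_{L^p}\leq C\,M^2|a|$, and the $h$-piece, after one further application of Minkowski in $\tau$, reduces to proving the uniform-in-$\tau$ estimate
\[
\|s^\alpha G(\cdot,\tau)\|_{L^p} \leq C_M\,\tau^{\alpha+1/p}, \qquad G(s,\tau):=\int_s^1 \min(\sigma,\tau)|\bm{x}''|^2\,\mathrm{d}\sigma.
\]

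This final inequality is where the hypothesis $\alpha+1/p\leq 1$ is used, and is the main technical obstacle. I would split the $L^p$ integral at $s=\tau$: on $[0,\tau]$ the uniform bound $G(s,\tau)\leq\int_0^1\sigma|\bm{x}''|^2\,\mathrm{d}\sigma\leq M^2$ gives a contribution $\lesssim M^2\tau^{\alpha+1/p}$; for $s\in[\tau,1]$ one has $G(s,\tau)=\tau\int_s^1|\bm{x}''|^2\,\mathrm{d}\sigma$, and using $s^\alpha\leq\sigma^\alpha$ together with another application of Minkowski yields
\[
\bigl\|s^\alpha G(\cdot,\tau)\bigr\|_{L^p([\tau,1])} \leq \tau\int_0^1\sigma^{\alpha+1/p}|\bm{x}''|^2\,\mathrm{d}\sigma \leq \tau M^2 \leq \tau^{\alpha+1/p}M^2,
\]
where the last two inequalities use $\alpha+1/p\leq 1$ in the forms $\sigma^{\alpha+1/p}\leq\sigma$ on $(0,1)$ and $\tau\leq\tau^{\alpha+1/p}$. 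The delicate part is the endpoint $\alpha+1/p=1$: the cruder pointwise estimate $\int_s^1|\bm{x}''|^2\lesssim s^{-1}M^2$ would introduce a spurious logarithm, and it is precisely the Minkowski-in-$\sigma$ step above that converts the weighted $L^2$ control on $\bm{x}''$ into a clean bound. The case $p=\infty$ is treated analogously with suprema in place of $L^p$ integrals.
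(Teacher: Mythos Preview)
The paper does not give its own proof of this lemma: it is quoted verbatim from \cite[Lemma~3.7]{IguchiTakayama2023} and used as a black box, so there is no in-paper argument to compare against. Your proposal, however, is a correct self-contained proof. The comparison step $|\nu|\leq v$ via the maximum principle is valid (with the convention $w_-=\min(w,0)$ your displayed inequality is exactly right), the integrated representation of $\nu'$ is correct, and the Minkowski-in-$\sigma$ estimates for each of the three contributions go through as you describe. In particular, the splitting of $\|s^\alpha G(\cdot,\tau)\|_{L^p}$ at $s=\tau$ and the chain $\sigma^{\alpha+1/p}\leq\sigma$, $\tau\leq\tau^{\alpha+1/p}$ on $(0,1]$ under $\alpha+1/p\leq1$ is exactly the place where that hypothesis is needed, and your observation that a naive pointwise bound $\int_s^1|\bm{x}''|^2\lesssim M^2 s^{-1}$ would produce a spurious logarithm at the endpoint $\alpha+1/p=1$ is on target.

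One minor remark: for the comparison function $v$ to be finite you implicitly use $\int_0^1\sigma|h(\sigma)|\,\mathrm{d}\sigma<\infty$, which indeed follows from $\|s^{\alpha+1/p}h\|_{L^1}<\infty$ and $\sigma\leq\sigma^{\alpha+1/p}$ on $(0,1]$; it would not hurt to say this explicitly. Otherwise the argument is complete and gives a clean, elementary route to the estimate.
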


The estimate in this lemma is not sufficient to guarantee that the solution $\nu_\mathrm{l}$ of \eqref{lnu} is in fact a lower order term. 
In order to show that $\nu_\mathrm{l}$ is of lower order, we need to consider the two-point boundary value problem 
\begin{equation}\label{TBVP2}
\begin{cases}
 -\nu''+|\bm{x}''|^2\nu = h_I-h_{II}' \quad\mbox{in}\quad (0,1), \\
 \nu(0)=0, \quad \nu'(1)=a+h_{II}(1),
\end{cases}
\end{equation}
where $h_I$ and $h_{II}$ are given functions and $a$ is a constant.

\begin{lemma}\label{lem:EstNu}
For any $M>0$ there exists a constant $C=C(M)>0$ such that if $\|s^{\frac12}\bm{x}''\|_{L^2} \leq M$, 
then the solution $\nu$ to the boundary value problem \eqref{TBVP2} satisfies 
\[
\|\nu'\|_{L^2} \leq C(|a|+\|s^\frac{1}{2}h_I\|_{L^1})+\|h_{II}\|_{L^2}.
\]
\end{lemma}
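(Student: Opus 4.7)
The plan is to derive the estimate via a single energy identity obtained by testing the equation against $\nu$ itself, after first integrating by parts to absorb the distributional term $-h_{II}'$.

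\textbf{Step 1: Weak formulation.} For a test function $\varphi$ with $\varphi(0)=0$, multiply the equation by $\varphi$, integrate on $(0,1)$, and integrate by parts. Using $\nu(0)=\varphi(0)=0$ together with the identity
\[
\int_0^1(-h_{II}')\varphi\,\mathrm{d}s = -h_{II}(1)\varphi(1)+\int_0^1 h_{II}\varphi'\,\mathrm{d}s,
\]
the boundary term from the left side produces $-\nu'(1)\varphi(1)=-(a+h_{II}(1))\varphi(1)$, which cancels the $-h_{II}(1)\varphi(1)$ appearing on the right. The resulting weak formulation is
\[
\int_0^1\nu'\varphi'\,\mathrm{d}s + \int_0^1|\bm{x}''|^2\nu\varphi\,\mathrm{d}s = a\varphi(1) + \int_0^1 h_I\varphi\,\mathrm{d}s + \int_0^1 h_{II}\varphi'\,\mathrm{d}s.
\]

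\textbf{Step 2: Energy identity.} Setting $\varphi=\nu$ (which is admissible since $\nu(0)=0$) yields
\[
\|\nu'\|_{L^2}^2 + \int_0^1|\bm{x}''|^2\nu^2\,\mathrm{d}s = a\,\nu(1) + \int_0^1 h_I\,\nu\,\mathrm{d}s + \int_0^1 h_{II}\nu'\,\mathrm{d}s.
\]
Since the second term on the left is nonnegative, it can simply be discarded; this is why the hypothesis $\|s^{1/2}\bm{x}''\|_{L^2}\leq M$ will enter only to ensure that the boundary value problem is well-posed (by standard Sturm--Liouville theory) and that the integrals on the right are well-defined, not in the constant of the estimate itself.

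\textbf{Step 3: Bounding the right-hand side.} Because $\nu(0)=0$, Cauchy--Schwarz gives the pointwise bound $|\nu(s)|\leq s^{1/2}\|\nu'\|_{L^2}$; in particular $|\nu(1)|\leq\|\nu'\|_{L^2}$. Hence
\[
|a\,\nu(1)| \leq |a|\,\|\nu'\|_{L^2}, \qquad
\left|\int_0^1 h_I\,\nu\,\mathrm{d}s\right| \leq \|s^{1/2}h_I\|_{L^1}\|\nu'\|_{L^2},
\]
while Cauchy--Schwarz directly gives $|\int_0^1 h_{II}\nu'\,\mathrm{d}s|\leq\|h_{II}\|_{L^2}\|\nu'\|_{L^2}$. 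Combining these and dividing through by $\|\nu'\|_{L^2}$ yields the claimed inequality (with constant $C=1$ in fact; the weaker form stated in the lemma is sufficient for later applications).

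The only mildly delicate point is the bookkeeping in Step~1: one must choose the test-function space with Dirichlet condition at $s=0$ but \emph{no} condition at $s=1$, and exploit the specific form of the Neumann datum $a+h_{II}(1)$ so that the boundary contribution from integrating $-h_{II}'$ by parts cancels exactly. Everything else is a straightforward application of the weighted pointwise estimate $|\nu(s)|\leq s^{1/2}\|\nu'\|_{L^2}$, which is the only mechanism by which the weight $s^{1/2}$ on $h_I$ enters naturally.
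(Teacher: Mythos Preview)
Your proof is correct and in fact slightly sharper than the paper's: you obtain the estimate with absolute constant $C=1$, whereas the paper's argument yields an $M$-dependent constant. The paper proceeds by splitting via linearity: for the part with $h_{II}=0$ it invokes the cited Lemma~\ref{lem:EstSolBVP3} (proved elsewhere), and for the part with $h_I=0$, $a=0$ it performs the same energy identity you use. Your approach handles all three source terms $(a,h_I,h_{II})$ simultaneously in a single energy identity, the key observation being the weighted pointwise bound $|\nu(s)|\leq s^{1/2}\|\nu'\|_{L^2}$ from $\nu(0)=0$, which lets you absorb the $h_I$ and $a$ contributions without appealing to the external lemma. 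This is more self-contained and explains transparently why the weight $s^{1/2}$ on $h_I$ is natural; the paper's route has the minor advantage of reusing a result already on the shelf.
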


\begin{proof}
The estimate in the case $h_{II}=0$ comes from Lemma \ref{lem:EstSolBVP3}. 
Therefore, by the linearity of the problem, it is sufficient to show the estimate in the case $h_I=0$ and $a=0$. 
Multiplying the first equation in \eqref{TBVP2} by $\nu$ and integrating it over $[0,1]$, we see that 
\begin{align*}
\int_0^1(|\nu'(s)|^2+|\bm{x}''(s)|^2|\nu(s)|^2)\mathrm{d}s
&= \nu'(1)\nu(1) - \nu'(0)\nu(0) - \int_0^1h_{II}'(s)\nu(s)\mathrm{d}s \\
&= \int_0^1h_{II}(s)\nu'(s)\mathrm{d}s,
\end{align*}
where we used the boundary conditions. 
This implies $\|\nu'\|_{L^2}\leq\|h_{II}\|_{L^2}$. 
\end{proof}

\begin{lemma}\label{lem:EstPhi}
Let $j$ be a positive integer and $M>0$. 
There exists a constant $C=C(j,M)>0$ such that if $\bm{x}$ satisfies
  \setlength{\parskip}{-1mm}
\begin{enumerate}
  \setlength{\itemsep}{-0.5mm}
\item[{\rm (i)}]
$\opnorm{\bm{x}(t)}_{3,1} \leq M$ in the case $j=1$;
\item[{\rm (ii)}]
$\opnorm{\bm{x}(t)}_{4,2} \leq M$ in the case $j=2$;
\item[{\rm (iii)}]
$\opnorm{\bm{x}(t)}_{j+1,j-1}, \|\dt^j\bm{x}(t)\|_{X^2} \leq M$ in the case $j\geq3$,
\end{enumerate}
then the solution $\phi$ of \eqref{defphi} satisfies $\opnorm{\phi'(t)}_j \leq C$. 
\end{lemma}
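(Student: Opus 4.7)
The plan is to differentiate \eqref{defphi} in $t$ and apply Lemma \ref{lem:EstSolBVP3} repeatedly, upgrading spatial regularity through the second-order equation itself. Applying $\dt^k$ to \eqref{defphi} for $0\leq k\leq j$ yields the BVP
\[
-\dt^k\phi'' + |\bm{x}''|^2\,\dt^k\phi = H_k, \qquad \dt^k\phi(0)=0,\quad \dt^k\phi'(1)=\delta_{k,0},
\]
where, by the Leibniz rule,
\[
H_k = -\!\!\sum_{\substack{a+b+c=k\\ c<k}} C_{a,b,c}\,(\dt^a\bm{x}''\cdot\dt^b\bm{x}'')\,\dt^c\phi
\]
with combinatorial coefficients $C_{a,b,c}$. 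I would argue by induction on $j$. Having proved the bound for $j-1$, for each $k=0,1,\ldots,j$ I would first control $\|\dt^k\phi'\|_{L^2}$ from Lemma \ref{lem:EstSolBVP3} applied to the above BVP, and then boost to the full $X^{j-k}$-norm by differentiating $\dt^k\phi'' = |\bm{x}''|^2\dt^k\phi - H_k$ in $s$ the required number of times and redistributing weights.

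The base case $j=1$ is direct. Lemma \ref{lem:EstSolBVP3} on \eqref{defphi} (with $h=0$, $a=1$) gives $\|\phi'\|_{L^p}\leq C$ for every $p\in[1,\infty]$, and in particular $\|\phi\|_{L^\infty}\leq C$ since $\phi(0)=0$. From $\phi''=|\bm{x}''|^2\phi$ together with Lemma \ref{lem:CalIneqLp1} (taking $k=j=1$, $p=4$), one obtains $\|s^{1/2}\phi''\|_{L^2}\leq \|\phi\|_{L^\infty}\|s^{1/4}\bm{x}''\|_{L^4}^2\leq C\|\bm{x}\|_{X^2}^2$, so $\|\phi'\|_{X^1}\leq C$. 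For $\|\dt\phi'\|_{L^2}$, $\dt\phi$ solves \eqref{TBVP} with $a=0$ and $h=-2(\bm{x}''\cdot\dt\bm{x}'')\phi$; using $|\phi(s)|\leq s\|\phi'\|_{L^\infty}$ one gets
\[
\|s^{1/2}h\|_{L^1}\leq 2\|\phi'\|_{L^\infty}\|s^{1/2}\bm{x}''\|_{L^2}\|s\,\dt\bm{x}''\|_{L^2}\leq C\|\bm{x}\|_{X^3}\|\dt\bm{x}\|_{X^2},
\]
and Lemma \ref{lem:EstSolBVP3} gives $\|\dt\phi'\|_{L^2}\leq C$.

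The inductive step follows this template on each summand in $H_k$ and on each new spatial derivative demanded by the norm $X^{j-k}$. The central mechanism is that $\dt^c\phi(0)=0$ gives the pointwise bound $|\dt^c\phi(s)|\leq s^{1-1/p}\|\dt^c\phi'\|_{L^p}$ for any $p$, which upgrades the $s^{1/2}$-weight supplied by Lemma \ref{lem:EstSolBVP3} and allows one to absorb powers of $s$ into the weakest factor in $H_k$. Each remaining factor is then placed in a weighted Sobolev norm via Lemmas \ref{lem:Algebra} and \ref{lem:CalIneqLp1}: the $\dt^c\phi$-factors with $c\leq j-1$ are controlled by the inductive hypothesis, and the $\dt^a\bm{x}''$, $\dt^b\bm{x}''$ factors by the hypothesis of the lemma. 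The passage from $\|\dt^k\phi'\|_{L^2}$ to $\|\dt^k\phi'\|_{X^{j-k}}$ is carried out by differentiating $\dt^k\phi''=|\bm{x}''|^2\dt^k\phi-H_k$ in $s$ and reapplying the same Hölder bookkeeping.

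The main obstacle is the extremal term in $H_j$ with $a=j$ and $c=0$, namely $(\dt^j\bm{x}''\cdot\bm{x}'')\phi$: for $j\geq 3$ the hypothesis supplies only $\|s\,\dt^j\bm{x}''\|_{L^2}\leq\|\dt^j\bm{x}\|_{X^2}\leq M$, so one is forced to spend the full extra factor of $s$ coming from $|\phi|\leq s\|\phi'\|_{L^\infty}$ in order to form the pairing $\|s^{1/2}\bm{x}''\|_{L^2}\,\|s\,\dt^j\bm{x}''\|_{L^2}$. This is precisely why the hypotheses carry the top time derivative of $\bm{x}$ separately at only $X^2$-regularity. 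A secondary nuisance is that the induction does not supply $\|\dt^c\phi'\|_{L^\infty}$ for $c$ close to $j-1$; it must be replaced by $\|\dt^c\phi'\|_{L^p}$ for a suitable finite $p$, which is still produced by Lemma \ref{lem:EstSolBVP3} at level $c$. With this in place, the remainder of the argument is a careful but mechanical Hölder accounting.
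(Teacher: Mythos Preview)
Your overall strategy---differentiate \eqref{defphi} in time, apply Lemma \ref{lem:EstSolBVP3} to the resulting BVP, then recover spatial regularity from the equation---is exactly the paper's. There is, however, a concrete slip in your base case: the inequality $\|s^{1/4}\bm{x}''\|_{L^4}\leq C\|\bm{x}\|_{X^2}$ is false, and Lemma \ref{lem:CalIneqLp1} with $k=j=1$ does not give it (the first line yields $\|s^{1/4}\bm{x}'\|_{L^4}$, the second $\|s^{3/4}\bm{x}''\|_{L^4}$). The fix is immediate with the tools you already have: use the sharper pointwise bound $|\phi(s)|\leq s\|\phi'\|_{L^\infty}$ that you invoke later, so that
\[
\|s^{1/2}\phi''\|_{L^2}\leq \|\phi'\|_{L^\infty}\|s^{3/4}\bm{x}''\|_{L^4}^2\leq C\|\bm{x}\|_{X^3}^2,
\]
which is exactly what the hypothesis $\opnorm{\bm{x}}_{3,1}\leq M$ provides. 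This is precisely how the paper argues (it quotes Lemma \ref{lem:CalIneq2}, which packages the same H\"older step).

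On the higher-order part, the paper avoids differentiating $\dt^k\phi''=|\bm{x}''|^2\dt^k\phi-H_k$ directly in $s$. Instead it uses the decomposition $\opnorm{\phi'}_k\leq\sum_{l=0}^k\|\dt^l\phi'\|_{L^2}+\opnorm{\phi''}_{k-1}^\dag$ and then the pre-established product estimate Lemma \ref{lem:CalIneqY2} (applied with $\tau=\phi$, $u=v=\bm{x}$) to bound $\opnorm{\phi''}_{k-1}^\dag=\opnorm{\phi|\bm{x}''|^2}_{k-1}^\dag\lesssim\opnorm{\phi'}_{k-1}$, yielding a clean one-step recursion. The commutator terms in $\|\dt^k\phi'\|_{L^2}$ are handled by Lemma \ref{lem:CalIneqY3}, and your ``secondary nuisance'' about $\|\dt^c\phi'\|_{L^\infty}$ for $c\geq1$ never arises: the paper only ever needs $\|\dt^c\phi'\|_{L^2}$ for $c\geq1$ (and $\|\phi'\|_{L^\infty}$ for $c=0$), which the induction supplies directly. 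Your plan to differentiate in $s$ by hand would ultimately reproduce the content of Lemmas \ref{lem:CalIneqY2}--\ref{lem:CalIneqY3}, so the approaches are equivalent; the paper's route just keeps the bookkeeping short.
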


\begin{proof}
We first consider the case $j=1$. 
By Lemma \ref{lem:EstSolBVP3}, under the condition $\|\bm{x}(t)\|_{X^3}\lesssim1$ we have $\|\phi'(t)\|_{L^\infty}\lesssim1$ so that $|\phi(s,t)|\lesssim s$. 
We note that $\opnorm{\phi'}_1 \leq \|(\phi',\dot{\phi}')\|_{L^2}+\|\phi''\|_{Y^0}$. 
By using the first equation in \eqref{defphi} and Lemma \ref{lem:CalIneq2}, the second term in the right-hand side can be evaluated as 
$\|\phi''\|_{Y^0}=\|\phi\bm{x}''\cdot\bm{x}''\|_{Y^0} \lesssim \|\phi'\|_{L^\infty}\|\bm{x}\|_{X^3}^2$. 
To evaluate $\|\dot{\phi}'\|_{L^2}$ we differentiate \eqref{defphi} with respect to $t$ and obtain 
\[
\begin{cases}
 -\dot{\phi}''+|\bm{x}''|^2\dot{\phi} = -2\phi\bm{x}''\cdot\dot{\bm{x}}'' &\mbox{in}\quad (0,1), \\
 \dot{\phi}(0,t)=\dot{\phi}'(1,t)=0.
\end{cases}
\]
Therefore, by Lemma \ref{lem:EstSolBVP3} we get 
\begin{align*}
\|\dot{\phi}'\|_{L^2}
&\lesssim \|s^\frac12\phi\bm{x}''\cdot\dot{\bm{x}}''\|_{L^1} \\
&\leq \|\phi'\|_{L^\infty}\|s^\frac12\bm{x}''\|_{L^2} \|s\dot{\bm{x}}\|_{L^2} \\
&\leq \|\phi'\|_{L^\infty}\|\bm{x}\|_{X^3} \|\dot{\bm{x}}\|_{X^2}.
\end{align*}
These estimates give $\opnorm{\phi'(t)}_1\lesssim1$.

We then consider the case $j\geq2$. 
Let $k$ be an integer such that $2\leq k\leq j$. 
We note that $\opnorm{\phi'}_k \leq \sum_{l=0}^k\|\dt^l\phi'\|_{L^2}+\opnorm{\phi''}_{k-1}^\dag$. 
By Lemma \ref{lem:CalIneqY2}, the second term in the right-hand side can be evaluated as 
\begin{align*}
\opnorm{\phi''}_{k-1}^\dag
&= \opnorm{\phi\bm{x}''\cdot\bm{x}''}_{k-1}^\dag \\
&\lesssim
\begin{cases}
 \opnorm{\phi'}_{1}\opnorm{\bm{x}}_{4,1}^2 &\mbox{for}\quad k=2, \\
 \opnorm{\phi'}_{k-1}\opnorm{\bm{x}}_{k+1,k-1}^2 &\mbox{for}\quad k\geq3 
\end{cases}\\
&\lesssim \opnorm{\phi'}_{k-1}.
\end{align*}
We proceed to evaluate $\|\dt^k\phi'\|_{L^2}$. 
Differentiating \eqref{defphi} $k$-times with respect to $t$ we obtain 
\[
\begin{cases}
 -(\dt^k\phi)''+|\bm{x}''|^2(\dt^k\phi) = -[\dt^k,|\bm{x}''|^2]\phi &\mbox{in}\quad (0,1), \\
 (\dt^k\phi)(0,t)=(\dt^k\phi)'(1,t)=0.
\end{cases}
\]
Therefore, by Lemma \ref{lem:EstSolBVP3} we get 
\begin{align*}
\|\dt^k\phi'\|_{L^2}
&\lesssim \|s^\frac12[\dt^k,|\bm{x}''|^2]\phi\|_{L^1} \\
&\lesssim \textstyle\sum' I(k_0,k_1,k_2), 
\end{align*}
where $I(k_0,k_1,k_2)=\|s^\frac12(\dt^{k_0}\phi)(\dt^{k_1}\bm{x})''\cdot(\dt^{k_2}\bm{x})''\|_{L^1}$ and $\sum'$ denotes the summation over all $(k_0,k_1,k_2)$ 
satisfying $k_0+k_1+k_2=k$, $k_0\leq k-1$, and $k_1\leq k_2$. 
%
By Lemma \ref{lem:CalIneqY3}, we see that 
\begin{align*}
I(k_0,k_1,k_2)
&\lesssim
\begin{cases}
 \|\phi'\|_{L^\infty} \|\dt^{k_1}\bm{x}\|_{X^3} \|\dt^{k_2}\bm{x}\|_{X^2} &\mbox{for}\quad (k_0,k_1,k_2)=(0,0,k),(0,1,k-1), \\
 \|\dot{\phi}'\|_{L^2} \|\bm{x}\|_{X^4} \|\dt^{k-1}\bm{x}\|_{X^2} &\mbox{for}\quad (k_0,k_1,k_2)=(1,0,k-1), \\
 \|\dt^{k_0}\phi'\|_{L^2} \|\dt^{k_1}\bm{x}\|_{X^3} \|\dt^{k_2}\bm{x}\|_{X^3} &\mbox{for}\quad k_2\leq k-2
\end{cases} \\
&\lesssim 1+\opnorm{\phi'}_{k-1}.
\end{align*}
Summarizing the above estimates we obtain $\opnorm{\phi'}_k \lesssim \opnorm{\phi'}_{k-1}+1$ for $2\leq k\leq j$. 
Therefore, we get $\opnorm{\phi'}_j \lesssim 1$. 
\end{proof}

\begin{lemma}\label{lem:EstDtNu1}
Let $j$ be a non-negative integer and $M>0$. 
There exists a constant $C=C(j,M)>0$ such that if $\bm{x}$ and $\tau$ satisfy $\tau(0,t)=0$, $\|\tau'(t)\|_{L^\infty}\leq M$, and 
  \setlength{\parskip}{-1mm}
\begin{enumerate}
  \setlength{\itemsep}{-0.5mm}
\item[{\rm (i)}]
$\opnorm{\bm{x}(t)}_{3,1} \leq M$ in the case $j=0$; 
\item[{\rm (ii)}]
$\opnorm{\bm{x}(t)}_{4,2},\|\dot{\bm{x}}'(t)\|_{L^\infty},\|\dot{\tau}'(t)\|_{L^2} \leq M$ in the case $j=1$; 
\item[{\rm (iii)}]
$\|\bm{x}(t)\|_{X^4}, \sum_{k=1}^{j}\|\dt^k\bm{x}(t)\|_{X^3}, \|\dt^{j+1}\bm{x}(t)\|_{X^1}, \|\dot{\bm{x}}'(t)\|_{L^\infty}, 
 \sum_{k=1}^j\|\dt^k\tau'(t)\|_{L^2} \leq M$ in the case $j\geq2$, 
\end{enumerate}
then the solution $\nu_\mathrm{l}$ to the boundary value problem \eqref{lnu} satisfies 
\[
\|\dt^j\nu_\mathrm{l}'(t)\|_{L^2} \leq
\begin{cases}
 C\bigl( \|s^\frac12 h(t)\|_{L^1} + \opnorm{\bm{y}(t)}_{2,*} \bigr) &\mbox{for}\quad j=0, \\
 C\bigl( \sum_{k=0}^j\|s^\frac12 \dt^k h(t)\|_{L^1} + \opnorm{\bm{y}(t)}_{j+1} \bigr) &\mbox{for}\quad j\geq1.
\end{cases}
\]
\end{lemma}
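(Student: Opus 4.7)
The plan is to cast the problem \eqref{lnu} (and its $t$-differentiated versions) into the form \eqref{TBVP2}, so that Lemma \ref{lem:EstNu} delivers the desired $L^2$ bound on $\dt^j\nu_\mathrm{l}'$. The crux is that the source term $-2(\bm{x}''\cdot\bm{y}'')\tau$ carries two spatial derivatives of $\bm{y}$, which is one more than what $\opnorm{\bm{y}(t)}_{2,*}$ or $\opnorm{\bm{y}(t)}_{j+1}$ affords; the cure is the integration by parts
\[
-2\tau\bm{x}''\cdot\bm{y}'' \;=\; -2(\tau\bm{x}''\cdot\bm{y}')' + 2(\tau'\bm{x}''+\tau\bm{x}''')\cdot\bm{y}',
\]
which puts the dangerous piece into divergence form at the cost of producing a top-spatial-derivative term whose $\bm{y}$-factor has dropped to $\bm{y}'$.

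For the case $j=0$, I would set
\[
h_{II} = 2\tau\bm{x}''\cdot\bm{y}',\qquad a = -2(\dot{\bm{x}}'\cdot\dot{\bm{y}})(1,t),\qquad
h_I = h + 2\dot{\bm{x}}'\cdot\dot{\bm{y}}' + 2(\tau'\bm{x}''+\tau\bm{x}''')\cdot\bm{y}',
\]
so that \eqref{lnu} matches \eqref{TBVP2}. Lemma \ref{lem:EstNu} then gives $\|\nu_\mathrm{l}'\|_{L^2}\leq C(|a|+\|s^{1/2}h_I\|_{L^1})+\|h_{II}\|_{L^2}$, and it remains to estimate each piece. Using the pointwise bound $|\tau(s)|\leq s\|\tau'\|_{L^\infty}$, H\"older with the weight distributed as $s^{1/2}\cdot s^{1/2}$, and the trace estimates at $s=1$ (valid because the weights in $X^m$ only degenerate at $s=0$, so that $|u(1)|\lesssim\|u\|_{X^1}$ and $|u'(1)|\lesssim\|u\|_{X^2}$), one checks, for instance, $\|s^{1/2}\tau\bm{x}'''\cdot\bm{y}'\|_{L^1}\lesssim\|s^{3/2}\bm{x}'''\|_{L^2}\|\bm{y}'\|_{L^2}$, $\|s^{1/2}\dot{\bm{x}}'\cdot\dot{\bm{y}}'\|_{L^1}\lesssim\|\dot{\bm{x}}'\|_{L^2}\|s^{1/2}\dot{\bm{y}}'\|_{L^2}$, and for the term in $h_{II}$ a Hardy-type observation $\|s^{-1/2}\tau\bm{y}'\|_{L^\infty}\lesssim\|\tau'\|_{L^\infty}\|\bm{y}\|_{X^2}$ (coming from $|\bm{y}'(s)|\lesssim\|\bm{y}\|_{X^2}(1+s^{-1/2})$) gives $\|\tau\bm{x}''\cdot\bm{y}'\|_{L^2}\lesssim\|\bm{x}\|_{X^3}\|\bm{y}\|_{X^2}$. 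All of these fit inside $\opnorm{\bm{y}(t)}_{2,*}$ under hypothesis (i).

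For $j\geq 1$, I would apply $\dt^j$ to both the equation and the Neumann datum in \eqref{lnu}. The resulting two-point boundary value problem for $\dt^j\nu_\mathrm{l}$ has the same operator $-\partial_s^2+|\bm{x}''|^2$, its source picking up an extra commutator $-[\dt^j,|\bm{x}''|^2]\nu_\mathrm{l}$ and the $\dt^j$-Leibniz expansions of $2\dot{\bm{x}}'\cdot\dot{\bm{y}}'$ and $-2\tau\bm{x}''\cdot\bm{y}''$. The same divergence trick can be applied term-by-term to each expansion summand, producing $h_{I,j}$ and $h_{II,j}$, and Lemma \ref{lem:EstNu} closes the estimate. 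The commutator is controlled inductively: $\|s^{1/2}[\dt^j,|\bm{x}''|^2]\nu_\mathrm{l}\|_{L^1}$ only sees $\dt^k\nu_\mathrm{l}$ with $k<j$, which are dominated by $\dt^k\nu_\mathrm{l}'$ thanks to $\nu_\mathrm{l}(0,t)=0$, so the induction on $j$ closes. The individual products appearing in the Leibniz expansions are estimated with the calculus inequalities already available in Section \ref{sect:pre}, notably Lemmas \ref{lem:CalIneq1}, \ref{lem:CalIneq2}, \ref{lem:CalIneqY3}, Lemma \ref{lem:embedding}, and Lemma \ref{lem:CalIneqLp1}.

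The main obstacle is the bookkeeping in the inductive step for $j\geq 2$: each summand in $\dt^j(\tau\bm{x}''\cdot\bm{y}'')$ or $\dt^j(\dot{\bm{x}}'\cdot\dot{\bm{y}}')$ must be sorted into $h_{I,j}$ versus $h_{II,j}$ according to whether it can be absorbed in weighted $L^1$ without extra regularity on $\bm{y}$, or whether it needs the integration-by-parts swap to lower the top spatial derivative on $\bm{y}$ at the price of an $L^2$ estimate. A misplacement would force a bound by $\opnorm{\bm{y}(t)}_{j+2}$ rather than $\opnorm{\bm{y}(t)}_{j+1}$ and break the scheme. Verifying that the case distinctions (i)--(iii) on $\bm{x}$, $\tau$ are precisely strong enough for every summand that arises at each $j$ is the technically demanding part, but it is routine once the divergence-plus-commutator template above is fixed.
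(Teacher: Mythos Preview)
Your approach is correct and matches the paper's in spirit: cast \eqref{lnu} and its $t$-differentiated versions into the form \eqref{TBVP2}, apply Lemma~\ref{lem:EstNu}, and close by induction on $j$ using the commutator $-[\dt^j,|\bm{x}''|^2]\nu_\mathrm{l}$.

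Two minor differences worth noting. First, for $j=0$ the integration by parts you propose is unnecessary: since $|\tau|\leq Ms$, the dangerous term is bounded directly by $\|s^{1/2}\tau\bm{x}''\cdot\bm{y}''\|_{L^1}\lesssim\|s^{1/2}\bm{x}''\|_{L^2}\|s\bm{y}''\|_{L^2}$, and $\|s\bm{y}''\|_{L^2}$ is already part of $\|\bm{y}\|_{X^2}\leq\opnorm{\bm{y}}_{2,*}$. The paper simply takes $h_{II}=0$ here. Second, for $j\geq1$ the paper organizes the split more cleanly than a term-by-term sort: only the two summands carrying the \emph{top time derivative} on $\bm{y}$ go into $h_{k,II}$, namely $h_{k,II}=2\tau\bm{x}''\cdot\dt^k\bm{y}'-2\dot{\bm{x}}'\cdot\dt^{k+1}\bm{y}$, while everything else is packaged as the commutators $[\dt^k,\dot{\bm{x}}']\cdot\dot{\bm{y}}'$ and $[\dt^k,\tau\bm{x}'']\cdot\bm{y}''$ (plus the lower-order remainders $(\tau\bm{x}'')'\cdot\dt^k\bm{y}'$ and $-\dot{\bm{x}}''\cdot\dt^{k+1}\bm{y}$ produced by the divergence swap) in $h_{k,I}$. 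Note in particular that the swap is needed on the velocity term $\dot{\bm{x}}'\cdot\dt^{k+1}\bm{y}'$ as well, since $\opnorm{\bm{y}}_{k+1}$ only controls $\|\dt^{k+1}\bm{y}\|_{L^2}$; this is also why the paper writes the Neumann datum in \eqref{lnu} with the extra (vanishing) term $-2\dot{\bm{x}}'\cdot\dot{\bm{y}}$, so that after applying $\dt^k$ the boundary condition matches $a_k+h_{k,II}|_{s=1}$ exactly.
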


\begin{remark}\label{re:EstDtNu}
If we impose an additional condition $\opnorm{\bm{x}(t)}_{4,1}, \|\dot{\bm{x}}'(t)\|_{L^\infty} \leq M$ in the case $j=0$, 
then we can improve the estimate as $\|\nu_\mathrm{l}'(t)\|_{L^2} \leq C\bigl( \|s^\frac12 h(t)\|_{L^1} + \opnorm{\bm{y}(t)}_1 \bigr)$. 
\end{remark}

\begin{proof}[Proof of Lemma \ref{lem:EstDtNu1}]
We first note that $|\tau(s,t)|\leq Ms$. 
By Lemmas \ref{lem:EstNu} and \ref{lem:embedding2}, we see that 
\begin{align*}
\|\nu_\mathrm{l}'\|_{L^2}
&\lesssim |(\tau\bm{x}''\cdot\bm{y}')|_{s=1}| + \|s^\frac12(2\dot{\bm{x}}'\cdot\dot{\bm{y}}' - 2(\bm{x}''\cdot\bm{y}'')\tau + h)\|_{L^1} \\
&\lesssim \|\bm{x}\|_{X^3}\|\bm{y}\|_{X^2} + \|\dot{\bm{x}}'\|_{L^2} \|s^\frac12\dot{\bm{y}}'\|_{L^2} + \|s^\frac12\bm{x}''\|_{L^2} \|s\bm{y}''\|_{L^2}
 + \|s^\frac12 h\|_{L^1} \\
&\lesssim \|\bm{x}\|_{X^3}\|\bm{y}\|_{X^2} + \|\dot{\bm{x}}\|_{X^2} \|\dot{\bm{y}}\|_{X^1} + \|\bm{x}\|_{X^3} \|\bm{y}\|_{X^2} + \|s^\frac12 h\|_{L^1}.
\end{align*}
This gives the desired estimate in the case $j=0$. 

We then consider the case $j\geq1$. 
Let $k$ be an integer such that $1\leq k\leq j$. 
Differentiating the equations in \eqref{lnu} $k$-times with respect to $t$, we have 
\begin{equation}\label{DtBVP}
\begin{cases}
 -(\dt^k\nu_\mathrm{l})''+|\bm{x}''|^2(\dt^k\nu_\mathrm{l}) = h_{k,I}-h_{k,II}' &\mbox{in}\quad (0,1)\times(0,T), \\
 (\dt^k\nu_\mathrm{l}) = 0 &\mbox{on}\quad \{s=0\}\times(0,T), \\
 (\dt^k\nu_\mathrm{l})' = a_k+h_{k,II} &\mbox{on}\quad \{s=1\}\times(0,T),
\end{cases}
\end{equation}
where $a_k= -2([\dt^k,\dot{\bm{x}}']\cdot\dot{\bm{y}})|_{s=1} + 2([\dt^k,\tau\bm{x}'']\cdot\bm{y}')|_{s=1}$, 
$h_{k,II}=2\tau(\bm{x}''\cdot\dt^k\bm{y}')-2\dot{\bm{x}}'\cdot\dt^{k+1}\bm{y}$, and 
\begin{align*}
h_{k,I}
&= \dt^kh + 2([\dt^k,\dot{\bm{x}}']\cdot\dot{\bm{y}}'-[\dt^k,\tau\bm{x}'']\cdot\bm{y}'') - [\dt^k,|\bm{x}''|^2]\nu_\mathrm{l}
 + 2((\tau\bm{x}'')'\cdot\dt^k\bm{y}' - \dot{\bm{x}}''\cdot\dt^{k+1}\bm{y}). 
\end{align*}
Here, we see that 
\begin{align*}
|a_k| 
&\lesssim \sum_{k_1+k_2=k,k_2\leq k-1} |\dt^{k_1+1}\bm{x}'(1,t)| |\dt^{k_2+1}\bm{y}(1,t)| \\
&\quad\;
 + \sum_{k_0+k_1+k_2=k,k_2\leq k-1} |\dt^{k_0}\tau(1,t)| |\dt^{k_1}\bm{x}''(1,t)| |\dt^{k_2}\bm{y}'(1,t)| \\
&\lesssim \sum_{k_1+k_2=k,k_2\leq k-1} \|\dt^{k_1+1}\bm{x}\|_{X^2} \|\dt^{k_2+1}\bm{y}\|_{X^1}
 + \sum_{k_0+k_1+k_2=k,k_2\leq k-1} \|\dt^{k_0}\tau'\|_{L^2} \|\dt^{k_1}\bm{x}\|_{X^3} \|\dt^{k_2}\bm{y}\|_{X^2} \\
&\lesssim \opnorm{\bm{y}}_{k+1}
\end{align*}
and, by Lemma \ref{lem:CalIneqLp1}, that 
\begin{align*}
\|h_{k,II}\|_{L^2}
&\lesssim \|s^\frac12 \bm{x}''\|_{L^\infty} \|s^\frac12\dt^k\bm{y}'\|_{L^2} + \|\dot{\bm{x}}'\|_{L^\infty} \|\dt^{k+1}\bm{y}\|_{L^2} \\
&\lesssim \|\bm{x}\|_{X^4} \|\dt^k\bm{y}\|_{X^1} + \|\dot{\bm{x}}'\|_{L^\infty}\|\dt^{k+1}\bm{y}\|_{L^2} \\
&\lesssim \opnorm{\bm{y}}_{k+1}.
\end{align*}
We proceed to evaluate $\|s^\frac12 h_{k,II}\|_{L^1}$ term by term. 
We see that 
\begin{align*}
& \|s^\frac12((\tau\bm{x}'')'\cdot\dt^k\bm{y}' - \dot{\bm{x}}''\cdot\dt^{k+1}\bm{y})\|_{L^1} \\
&\lesssim \|\tau'\|_{L^\infty} ( \|s\bm{x}'''\|_{L^2}+\|\bm{x}''\|_{L^2} )\|s^\frac12\dt^k\bm{y}'\|_{L^2}
 + \|s^\frac12\dot{\bm{x}}''\|_{L^2} \|\dt^{k+1}\bm{y}\|_{L^2} \\
&\lesssim \|\bm{x}\|_{X^4} \|\dt^k\bm{y}\|_{X^1} + \|\dot{\bm{x}}\|_{X^3} \|\dt^{k+1}\bm{y}\|_{L^2}
\end{align*}
and that 
\begin{align*}
\|s^\frac12[\dt^k,\dot{\bm{x}}']\cdot\dot{\bm{y}}'\|_{L^1}
&\lesssim \sum_{k_1+k_2=k-1}\|s^\frac12(\dt^{k_1+2}\bm{x}')\cdot(\dt^{k_2+1}\bm{y}')\|_{L^1} \\
&\lesssim \|\dt^2\bm{x}'\|_{L^2} \|s^\frac12\dt^k\bm{y}'\|_{L^2} + \sum_{k_1+k_2=k-2} \|s^\frac12\dt^{k_1+3}\bm{x}'\|_{L^2} \|\dt^{k_2+1}\bm{y}'\|_{L^2} \\
&\lesssim \|\dt^2\bm{x}\|_{X^2} \|\dt^k\bm{y}\|_{X^1} + \|(\dt^3\bm{x},\ldots,\dt^{k+1}\bm{x})\|_{X^1} \|(\dt\bm{y},\ldots,\dt^{k-1}\bm{y})\|_{X^2},
\end{align*}
where we used Lemma \ref{lem:embedding2}. 
We see also that 
\[
\|s^\frac12[\dt^k,\tau\bm{x}'']\cdot\bm{y}''\|_{L^1}
\lesssim \sum_{k_0+k_1+k_2=k,k_2\leq k-1} \|s^\frac12(\dt^{k_0}\tau)(\dt^{k_1}\bm{x})''\cdot(\dt^{k_2}\bm{y})''\|_{L^1}.
\]
We evaluate $I(k_0,k_1,k_2)=\|s^\frac12(\dt^{k_0}\tau)(\dt^{k_1}\bm{x})''\cdot(\dt^{k_2}\bm{y})''\|_{L^1}$, where $k_0+k_1+k_2=k$ and $k_2\leq k-1$. 
By Lemma \ref{lem:CalIneqY3}, we see that 
\[
I(k_0,k_1,k_2) \lesssim
\begin{cases}
 \|\tau'\|_{L^\infty} \|\dt\bm{x}\|_{X^3} \|\dt^{k-1}\bm{y}\|_{X^2} &\mbox{for}\quad (k_0,k_1,k_2)=(0,1,k-1), \\
 \|\dt\tau'\|_{L^2} \|\bm{x}\|_{X^4} \|\dt^{k-1}\bm{y}\|_{X^2} &\mbox{for}\quad (k_0,k_1,k_2)=(1,0,k-1), \\
 \|\tau'\|_{L^\infty} \|\dt^2\bm{x}\|_{X^2} \|\dt^{k-2}\bm{y}\|_{X^3} &\mbox{for}\quad (k_0,k_1,k_2)=(0,2,k-2), \\
 \|\dt^{k_0}\tau'\|_{L^2} \|\dt^{k_1}\bm{x}\|_{X^3} \|\dt^{k-2}\bm{y}\|_{X^3} &\mbox{for}\quad k_1\leq 1,\ k_2=k-2, \\
 \|\dt^{k_0}\tau'\|_{L^2} \|\dt^{k_1}\bm{x}\|_{X^2} \|\dt^{k_2}\bm{y}\|_{X^4} &\mbox{for}\quad k_2\leq k-3,
\end{cases}
\]
so that $\|s^\frac12[\dt^k,\tau\bm{x}'']\cdot\bm{y}''\|_{L^1} \lesssim \opnorm{\bm{y}}_{k+1}$. 
Finally, we see that 
\begin{align*}
\|s^\frac12[\dt^k,|\bm{x}''|^2]\nu_\mathrm{l}\|_{L^1}
&\lesssim \sum_{k_0+k_1+k_2=k,k_0\leq k-1}\|s^\frac12(\dt^{k_0}\nu_\mathrm{l})(\dt^{k_1}\bm{x})''\cdot(\dt^{k_2}\bm{x})''\|_{L^1}.
\end{align*}
We evaluate $J(k_0,k_1,k_2)=\|s^\frac12(\dt^{k_0}\nu_\mathrm{l})(\dt^{k_1}\bm{x})''\cdot(\dt^{k_2}\bm{x})''\|_{L^1}$, where $k_0+k_1+k_2=k$ and $k_0\leq k-1$. 
By Lemma \ref{lem:CalIneqY3}, we see that 
\[
J(k_0,k_1,k_2) \lesssim
\begin{cases}
 \|\nu_\mathrm{l}'\|_{L^2} \|\bm{x}\|_{X^4} \|\dt^k\bm{x}\|_{X^2} &\mbox{for}\quad k_1=k \mbox{ or } k_2=k, \\
 \|\dt^{k_0}\nu_\mathrm{l}'\|_{L^2} \|\dt^{k_1}\bm{x}\|_{X^3}\|\dt^{k_2}\bm{x}\|_{X^3} &\mbox{for}\quad k_1,k_2 \leq k-1,
\end{cases}
\]
so that $\|s^\frac12[\dt^k,|\bm{x}''|^2]\nu_\mathrm{l}\|_{L^1} \lesssim \sum_{l=0}^{k-1}\|\dt^l\nu_\mathrm{l}'\|_{L^2}$. 
Summarizing the above estimates, we obtain 
\[
\|s^\frac12 h_{k,I}\|_{L^1} \lesssim \|s^\frac12\dt^k h\|_{L^1} + \sum_{l=0}^{k-1} \|\dt^l\nu_\mathrm{l}'\|_{L^2} + \opnorm{\bm{y}}_{k+1}.
\]
Now, we apply Lemma \ref{lem:EstNu} to the solution $\nu_\mathrm{l}$ of \eqref{DtBVP} and obtain 
\begin{align*}
\|\dt^k\nu_\mathrm{l}'\|_{L^2}
&\lesssim |a_k| + \|s^\frac12 h_{k,I}\|_{L^1} + \|h_{k,II}\|_{L^2} \\
&\lesssim \sum_{l=0}^{k-1} \|\dt^l\nu_\mathrm{l}'\|_{L^2} + \|s^\frac12\dt^k h\|_{L^1} + \opnorm{\bm{y}}_{k+1}.
\end{align*}
Using this inductively on $k=1,2,\ldots,j$, we obtain the desired estimate. 
\end{proof}

\begin{lemma}\label{lem:EstDtNu2}
Let $j$ be a positive integer and $M>0$. 
There exists a constant $C=C(j,M)>0$ such that if $\bm{x}$ and $\tau$ satisfy $\tau(0,t)=0$, $\|\tau'(t)\|_{L^\infty}\leq M$, and 
  \setlength{\parskip}{-1mm}
\begin{enumerate}
  \setlength{\itemsep}{-0.5mm}
\item[{\rm (i)}]
$\|(\bm{x},\dot{\bm{x}})(t)\|_{X^4}, \|\ddot{\bm{x}}(t)\|_{X^2}, \|\dot{\tau}'(t)\|_{L^2} \leq M$ in the case $j=1$;
\item[{\rm (ii)}]
$\opnorm{(\bm{x},\dot{\bm{x}})(t)}_{j+1\vee4,*}, \opnorm{\ddot{\bm{x}}(t)}_{j+1,*},\opnorm{\tau'(t)}_{j-2}, \|(\dt^{j-1}\tau',\dt^j\tau')(t)\|_{L^2} \leq M$ 
in the case $j\geq2$ and, in addition, $\|\dt^{j-3}\tau'(t)\|_{L^\infty}\leq M$ in the case $j\geq4$, 
\end{enumerate}
then the solution $\nu_\mathrm{l}$ to the boundary value problem \eqref{lnu} satisfies 
\[
\opnorm{\nu_\mathrm{l}'(t)}_j \leq C\left( \opnorm{h(t)}_{j-1}^\dag + \|s^\frac12 \dt^j h(t)\|_{L^1} + \opnorm{\bm{y}(t)}_{j+1} \right).
\]
\end{lemma}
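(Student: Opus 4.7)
The plan is to split the norm $\opnorm{\nu_\mathrm{l}'(t)}_j$ into an $L^2$-in-space part and a $Y$-type part for $\nu_\mathrm{l}''$, using the identity $\|u\|_{X^{m+1}}^2=\|u\|_{L^2}^2+\|u'\|_{Y^m}^2$ recorded in Section~\ref{sect:results}. This yields
\[
\opnorm{\nu_\mathrm{l}'(t)}_j \;\leq\; \sum_{k=0}^{j}\|\dt^k\nu_\mathrm{l}'(t)\|_{L^2} \;+\; \opnorm{\nu_\mathrm{l}''(t)}_{j-1}^\dag.
\]
The first sum will be controlled by Lemma~\ref{lem:EstDtNu1}, while the second will be estimated by substituting the equation $\nu_\mathrm{l}''=\nu_\mathrm{l}\bm{x}''\cdot\bm{x}''-2\dot{\bm{x}}'\cdot\dot{\bm{y}}'+2\tau(\bm{x}''\cdot\bm{y}'')-h$ into $\opnorm{\cdot}_{j-1}^\dag$ and applying the calculus inequalities of Section~\ref{sect:pre}. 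I expect to proceed by induction on $j\geq 1$.

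For the $L^2$ sum, one first verifies that the hypotheses imposed here imply those of Lemma~\ref{lem:EstDtNu1} with the same $j$; in particular $\|\dot{\bm{x}}'(t)\|_{L^\infty}\lesssim\|\dot{\bm{x}}(t)\|_{X^3}$ is furnished by the assumed bound on $\opnorm{(\bm{x},\dot{\bm{x}})(t)}_{j+1\vee 4,*}$, and the required bounds on $\dt^k\bm{x}$ and $\dt^k\tau'$ are inherited directly. The resulting sum $\sum_{k=0}^{j-1}\|s^{1/2}\dt^k h(t)\|_{L^1}$ is then bounded by $\opnorm{h(t)}_{j-1}^\dag$ via the trivial embedding $\|s^{1/2}u\|_{L^1}\leq\|s^{1/2}u\|_{L^2}=\|u\|_{Y^0}$, while the term $\|s^{1/2}\dt^j h(t)\|_{L^1}$ appears as-is in the target estimate. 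For the second piece, one decomposes $\opnorm{\nu_\mathrm{l}''(t)}_{j-1}^\dag$ into the four contributions above and applies Lemma~\ref{lem:CalIneqY2} to $\nu_\mathrm{l}\bm{x}''\cdot\bm{x}''$ (with the role of $\tau$ there played by $\nu_\mathrm{l}$, which vanishes at $s=0$), Lemma~\ref{lem:CalIneqY1} to $\dot{\bm{x}}'\cdot\dot{\bm{y}}'$, and Lemma~\ref{lem:CalIneqY2} to $\tau\bm{x}''\cdot\bm{y}''$. The two $\bm{y}$-bearing terms produce bounds of the form $C\opnorm{\bm{y}(t)}_{j+1}$, whereas the $\nu_\mathrm{l}$-bearing term yields a quantity $C\opnorm{\nu_\mathrm{l}'(t)}_{j-1}$, which is absorbed by the inductive hypothesis when $j\geq 2$ and by the $L^2$ bound of Lemma~\ref{lem:EstDtNu1} together with Remark~\ref{re:EstDtNu} in the base case $j=1$.

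The main obstacle is the bookkeeping required to match, for each value of $j$, the exact combinations of norms demanded by the various branches of Lemmas~\ref{lem:CalIneqY1} and \ref{lem:CalIneqY2} against the tailored hypotheses (i)--(ii) of the present lemma. In particular, the factor $\|\dt^{m-2}\tau'\|_{L^\infty}$ appearing in Lemma~\ref{lem:CalIneqY2} for $m\geq 2$, specialized here to $m=j-1\geq 3$ when estimating $\tau\bm{x}''\cdot\bm{y}''$, is precisely what forces the extra assumption $\|\dt^{j-3}\tau'(t)\|_{L^\infty}\leq M$ in case (ii) when $j\geq 4$; the analogous $\|\dt^{j-3}\nu_\mathrm{l}'\|_{L^\infty}$ that arises from applying Lemma~\ref{lem:CalIneqY2} to $\nu_\mathrm{l}\bm{x}''\cdot\bm{x}''$ is controlled through the embedding $\|u\|_{L^\infty}\lesssim\|u\|_{X^2}$ by a piece of $\opnorm{\nu_\mathrm{l}'(t)}_{j-1}$, and all the other norms that arise from the calculus inequalities are likewise accounted for by the hypotheses in the separate cases $j=1$, $j=2$, $j=3$, and $j\geq 4$.
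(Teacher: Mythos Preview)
Your proposal is correct and follows essentially the same approach as the paper: the same splitting $\opnorm{\nu_\mathrm{l}'}_j \leq \sum_{k=0}^j\|\dt^k\nu_\mathrm{l}'\|_{L^2} + \opnorm{\nu_\mathrm{l}''}_{j-1}^\dag$, the same use of Lemma~\ref{lem:EstDtNu1} for the $L^2$ sum, the same substitution of the equation for $\nu_\mathrm{l}''$, and the same calculus Lemmas~\ref{lem:CalIneqY1}--\ref{lem:CalIneqY2} for the resulting pieces, closed by induction. The only cosmetic difference is that the paper inducts on an auxiliary index $k\leq j$ with $j$ fixed, whereas you induct on $j$ itself; both work. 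Two small corrections: the embedding you quote should read $\|\dot{\bm{x}}'\|_{L^\infty}\lesssim\|\dot{\bm{x}}\|_{X^4}$ (not $X^3$), which is still furnished by $\opnorm{(\bm{x},\dot{\bm{x}})}_{j+1\vee4,*}$; and the appeal to Remark~\ref{re:EstDtNu} in the base case $j=1$ is unnecessary, since Lemma~\ref{lem:EstDtNu1} with $j=0$ already gives $\|\nu_\mathrm{l}'\|_{L^2}\lesssim\|s^{1/2}h\|_{L^1}+\opnorm{\bm{y}}_{2,*}\leq\|s^{1/2}h\|_{L^1}+\opnorm{\bm{y}}_2$.
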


\begin{proof}
Let $k$ be a positive integer such that $k\leq j$. 
We note that $\opnorm{\nu_\mathrm{l}'(t)}_k \leq \sum_{l=0}^k\|\dt^l\nu_\mathrm{l}'\|_{L^2} + \opnorm{\nu_\mathrm{l}''}_{k-1}^\dag$. 
By Lemma \ref{lem:EstDtNu1}, the first term in the right-hand side can be evaluated as 
\begin{align*}
\sum_{l=0}^k\|\dt^l\nu_\mathrm{l}'\|_{L^2}
&\lesssim \sum_{l=0}^j \|s^\frac12\dt^l h\|_{L^1} + \opnorm{\bm{y}}_{j+1} \\
&\lesssim \opnorm{h}_{j-1}^\dag + \|s^\frac12 \dt^j h\|_{L^1} + \opnorm{\bm{y}}_{j+1}. 
\end{align*}
By using the first equation in \eqref{lnu}, we have $\opnorm{\nu_\mathrm{l}''}_{k-1}^\dag \lesssim \opnorm{h}_{j-1}^\dag
 + \opnorm{\dot{\bm{x}}'\cdot\dot{\bm{y}}'}_{j-1}^\dag + \opnorm{\tau(\bm{x}''\cdot\bm{y}'')}_{j-1}^\dag + \opnorm{\nu_\mathrm{l}(\bm{x}''\cdot\bm{x}'')}_{k-1}^\dag$. 
By Lemmas \ref{lem:CalIneq1}--\ref{lem:CalIneqY2}, we can check that 
$\opnorm{\dot{\bm{x}}'\cdot\dot{\bm{y}}'}_{j-1}^\dag + \opnorm{\tau(\bm{x}''\cdot\bm{y}'')}_{j-1}^\dag \lesssim \opnorm{\bm{y}}_{j+1}$. 
Since $\nu_\mathrm{l}|_{s=0}=0$, we see also that 
\begin{align*}
\opnorm{\nu_\mathrm{l}(\bm{x}''\cdot\bm{x}'')}_{k-1}^\dag
&\lesssim
\begin{cases}
 \|\nu_\mathrm{l}'\|_{L^2} \|\bm{x}\|_{X^4}^2 &\mbox{for}\quad k=1, \\
 (\|\nu_\mathrm{l}'\|_{L^2} + \|\dt\nu_\mathrm{l}'\|_{L^2}) \opnorm{\bm{x}}_{4,1}^2 &\mbox{for}\quad k=2,
\end{cases}
\end{align*}
which is already evaluated. 
Therefore, we obtain the desired estimate in the case $j=1,2$. 
Moreover, for $3\leq k\leq j$ we have 
\begin{align*}
\opnorm{\nu_\mathrm{l}(\bm{x}''\cdot\bm{x}'')}_{k-1}^\dag
&\lesssim (\|\dt^{k-3}\nu_\mathrm{l}'\|_{L^\infty} + \opnorm{\nu_\mathrm{l}'}_{k-2} + \|\dt^{k-1}\nu_\mathrm{l}'\|_{L^2}) \opnorm{\bm{x}}_{k+1,k-1}^2 \\
&\lesssim \opnorm{\nu_\mathrm{l}'}_{k-1},
\end{align*}
so that 
$\opnorm{\nu_\mathrm{l}'(t)}_k \lesssim \opnorm{h(t)}_{j-1}^\dag + \|s^\frac12 \dt^j h(t)\|_{L^1} + \opnorm{\bm{y}(t)}_{j+1} + \opnorm{\nu_\mathrm{l}'(t)}_{k-1}$. 
Using this inductively on $k=3,4,\ldots,j$, we obtain the desired estimate. 
\end{proof}

\section{Estimates for initial values and compatibility conditions II}\label{sect:EstIV2}
We consider the initial boundary value problem for the linearized system \eqref{LEq} and \eqref{LBVP}. 
Let $(\bm{y},\nu)$ be a smooth solution to the problem and put $\bm{y}_j^\mathrm{in}=(\dt^j\bm{y})|_{t=0}$ and $\nu_j^\mathrm{in}=(\dt^j\nu)|_{t=0}$. 
Applying $\dt^j$ to \eqref{LEq} and \eqref{LBVP}, we see that $(\bm{y}_j^\mathrm{in},\nu_j^\mathrm{in})$ are determined inductively by 
\begin{equation}\label{RRIV1}
\bm{y}_{j+2}^\mathrm{in} = \sum_{j_0+j_1=j}\frac{j!}{j_0!j_1!}\bigl( 
 (\dt^{j_0}\tau)|_{t=0}(\bm{y}_{j_1}^{\mathrm{in}})' + \nu_{j_0}^\mathrm{in}(\dt^{j_1}\bm{x}')|_{t=0} \bigr)' + (\dt^j\bm{f})|_{t=0}
\end{equation}
and 
\begin{equation}\label{RRIV2}
\begin{cases}
 -(\nu_j^\mathrm{in})'' + |\bm{x}''|^2\nu_j^\mathrm{in} = h_j^\mathrm{in} \quad\mbox{in}\quad (0,1), \\
 \nu_j^\mathrm{in}(0)=0, \quad (\nu_j^\mathrm{in})'(1)=-\bm{g}\cdot(\bm{y}_j^\mathrm{in})'(1)
\end{cases}
\end{equation}
for $j=0,1,\ldots$, where 
\begin{align*}
h_j^\mathrm{in}
&= 2\sum_{j_1+j_2=j}\frac{j!}{j_1!j_2!}(\dt^{j_1+1}\bm{x}')|_{t=0}\cdot(\bm{y}_{j_2+1}^\mathrm{in})' 
 -2\sum_{j_0+j_1+j_2=j}\frac{j!}{j_0!j_1!j_2!}(\dt^{j_0}\tau)|_{t=0}(\dt^{j_1}\bm{x}'')|_{t=0}\cdot(\bm{y}_{j_2}^\mathrm{in})'' \\
&\quad\;
 - \sum_{j_0+j_1+j_2=j, j_0\leq j-1}\frac{j!}{j_0!j_1!j_2!}\nu_{j_0}^\mathrm{in}(\dt^{j_1}\bm{x}''\cdot\dt^{j_2}\bm{x}'')|_{t=0}
 + (\dt^jh)|_{t=0}.
\end{align*}
In fact, once the initial data $(\bm{y}_0^\mathrm{in},\bm{y}_1^\mathrm{in})$ are given, the two-point boundary value problem \eqref{RRIV2} in the case 
$j=0$ determines $\nu_0^\mathrm{in}$. 
Then, \eqref{RRIV1} with $j=0$ determines $\bm{y}_2^\mathrm{in}$. 
Then, the two-point boundary value problem \eqref{RRIV2} in the case $j=1$ determines $\nu_1^\mathrm{in}$. 
Then, \eqref{RRIV1} with $j=1$ determines $\bm{y}_3^\mathrm{in}$, and so on. 
On the other hand, by applying the boundary condition in \eqref{LEq} on $s=0$ and putting $t=0$, we obtain 
\begin{equation}\label{CC2}
\bm{y}_j^\mathrm{in}(1)=\bm{0}
\end{equation}
for $j=0,1,2,\ldots$. 
These are necessary conditions that the data $(\bm{y}_0^\mathrm{in},\bm{y}_1^\mathrm{in},\bm{f},h)$ should satisfy for the existence of a regular solution 
to the problem \eqref{LEq} and \eqref{LBVP}, and are known as compatibility conditions. 
To state the conditions more precisely, we need to evaluate the initial values $\{\bm{y}_j^\mathrm{in}\}$. 
Although it is sufficient to evaluate $\dt^j\bm{y}$ only at time $t=0$, we will evaluate them at general time $t$.

\begin{lemma}\label{lem:EstIV2}
Let $m\geq2$ be an integer and assume that Assumptions \ref{ass:xtau} and \ref{ass:addxtau} are satisfied with a positive constant $M_0$ and that 
that $\bm{f}\in\mathscr{X}_T^{m-2}$ and $s^\frac12\dt^{m-2}h\in C^0([0,T];L^1)$. 
In the case $m\geq3$, assume also that $h\in\mathscr{Y}_T^{m-3}$. 
Then, there exists a positive constant $C_0$ depending only on $m$ and $M_0$ such that if $(\bm{y},\nu)$ is a solution to \eqref{LEq} and \eqref{LBVP}, 
then we have 
\[
\opnorm{\bm{y}(t)}_m \leq C_0 \bigl( \opnorm{\bm{y}(t)}_{m,1} + \opnorm{\bm{f}(t)}_{m-2} + \opnorm{h(t)}_{m-3}^\dag + \|s^\frac12\dt^{m-2} h(t)\|_{L^1} \bigr),
\]
where we used a notational convention $\opnorm{\cdot}_{-1}^\dag=0$.
\end{lemma}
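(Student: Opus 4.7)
The plan is to mimic the proof of Lemma \ref{lem:EstIV} step by step, using the reformulation \eqref{rLEq} of the linearized system. Writing $A(s,t)=\tau(s,t)\mathrm{Id}$ and $Q(s,t)=-(\phi\bm{x}')'(s,t)\otimes(\bm{g}+2\tau\bm{x}'')(1,t)$, the equation for $\bm{y}$ reads
\[
\ddot{\bm{y}} = (A\bm{y}')' + Q\bm{y}'(1,t) + (\nu_\mathrm{l}\bm{x}')' + \bm{f}.
\]
For each $0\leq j\leq m-2$, I would argue exactly as in the proof of Lemma \ref{lem:EstIV} to get
\[
\opnorm{\bm{y}}_{m,j+2} \leq \opnorm{\bm{y}}_{m,1} + \opnorm{(A\bm{y}')'}_{m-2,j} + \opnorm{Q\bm{y}'(1,\cdot)}_{m-2,j} + \opnorm{(\nu_\mathrm{l}\bm{x}')'}_{m-2,j} + \opnorm{\bm{f}}_{m-2,j}.
\]

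First, I would verify that the pair $(A,Q)$ satisfies the relevant parts of Assumptions \ref{ass:BEE} and \ref{ass:HOEE} driving the proof of Lemma \ref{lem:EstIV}, with constants depending only on $m$ and $M_0$ (and only on $M_1$ for the top-order derivatives). For $A=\tau\mathrm{Id}$ this is immediate from Assumption \ref{ass:xtau}. For $Q$ one combines the explicit formula with Lemma \ref{lem:EstPhi} (to control $\phi$) and Lemma \ref{lem:Algebra} (for products), using the Sobolev boundary bound $|f(1,t)| \lesssim \|f\|_{X^2}$ on the factor $(\bm{g}+2\tau\bm{x}'')(1,t)$; the case distinctions in Lemma \ref{lem:EstPhi}'s hypotheses are matched case-by-case by those in Assumption \ref{ass:xtau}. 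Granting these bounds, the argument of Lemma \ref{lem:EstIV} (via Lemmas \ref{lem:EstAu} and \ref{lem:EstAu2}) applies verbatim and yields
\[
\opnorm{(A\bm{y}')'}_{m-2,j} + \opnorm{Q\bm{y}'(1,\cdot)}_{m-2,j} \lesssim \opnorm{\bm{y}}_{m,j}.
\]

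The genuinely new ingredient is the bound on $\opnorm{(\nu_\mathrm{l}\bm{x}')'}_{m-2,j}$. Since $\nu_\mathrm{l}|_{s=0}=0$ by \eqref{lnu}, Lemma \ref{lem:EstAu2} (for $m\geq 4$) applied with $(a,u)\leftrightarrow(\nu_\mathrm{l},\bm{x})$ controls this product by $\opnorm{\nu_\mathrm{l}'}_{m-2,j}\opnorm{\bm{x}}_{m,j}$, while for $m=2,3$ Lemma \ref{lem:EstAu} plays the analogous role, together with the pointwise bound $|\nu_\mathrm{l}(s)| \leq s^{1/2}\|\nu_\mathrm{l}'\|_{L^2}$. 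The $\bm{x}$-factor is absorbed into the constant via Assumption \ref{ass:xtau}, and Lemma \ref{lem:EstDtNu1} (for $m=2$, invoking Remark \ref{re:EstDtNu}) together with Lemma \ref{lem:EstDtNu2} (for $m\geq 3$) yields
\[
\opnorm{\nu_\mathrm{l}'}_{m-2,j} \leq \opnorm{\nu_\mathrm{l}'}_{m-2} \lesssim \opnorm{h}_{m-3}^\dag + \|s^{1/2}\dt^{m-2}h\|_{L^1} + \opnorm{\bm{y}}_{m-1},
\]
where one checks case by case that Assumptions \ref{ass:xtau} and \ref{ass:addxtau} supply the regularity hypotheses required by those lemmas. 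Since $\opnorm{\bm{y}}_{m-1} \leq \opnorm{\bm{y}}_{m,m-1}$, combining all estimates and iterating in $j$ from $0$ up to $m-2$ produces the claimed bound.

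The main obstacle will be the case-by-case bookkeeping: translating the split assumptions on $(\bm{x},\tau)$ in Assumptions \ref{ass:xtau} and \ref{ass:addxtau} into the pointwise and norm-wise hypotheses demanded by Lemmas \ref{lem:EstPhi}, \ref{lem:EstDtNu1}, and \ref{lem:EstDtNu2} in each regime $m=2$, $m=3$, $m\geq 4$, and checking that the bounds for $Q$ fit into Assumption \ref{ass:HOEE} with constants of the claimed dependence. Once this verification is in place, each individual estimate above reduces to a direct citation of a lemma already established in Sections \ref{sect:pre} and \ref{sect:TBVP}, and the final induction on $j$ is entirely parallel to that in the proof of Lemma \ref{lem:EstIV}.
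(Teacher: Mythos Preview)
Your overall strategy matches the paper's: reformulate via \eqref{rLEq}, verify that $A=\tau\mathrm{Id}$ and $Q=-(\phi\bm{x}')'\otimes(\bm{g}+2\tau\bm{x}'')(1,t)$ satisfy Assumptions \ref{ass:BEE}--\ref{ass:HOEE} with constants $C(m,M_0)$, and then control the extra forcing $(\nu_\mathrm{l}\bm{x}')'$ by means of Lemmas \ref{lem:EstDtNu1}--\ref{lem:EstDtNu2}. The paper in fact applies Lemma \ref{lem:EstIV} as a black box to obtain $\opnorm{\bm{y}}_m \lesssim \opnorm{\bm{y}}_{m,1}+\opnorm{\bm{F}}_{m-2}$ with $\bm{F}=\bm{f}+(\nu_\mathrm{l}\bm{x}')'$, rather than re-opening the $j$-iteration as you do; this is cleaner but not essential.

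There is, however, a genuine circularity in your final step. You bound
\[
\opnorm{\nu_\mathrm{l}'}_{m-2,j}\leq \opnorm{\nu_\mathrm{l}'}_{m-2}\lesssim \opnorm{h}_{m-3}^\dag+\|s^{1/2}\dt^{m-2}h\|_{L^1}+\opnorm{\bm{y}}_{m-1}
\]
and then replace $\opnorm{\bm{y}}_{m-1}$ by $\opnorm{\bm{y}}_{m,m-1}$. With this replacement your recursion reads
\[
\opnorm{\bm{y}}_{m,j+2}\lesssim \opnorm{\bm{y}}_{m,j}+\text{data}+\opnorm{\bm{y}}_{m,m-1}
\]
for every $j$, so at $j=m-3$ the left-hand side is $\opnorm{\bm{y}}_{m,m-1}$ while the right-hand side still contains $\opnorm{\bm{y}}_{m,m-1}$; the iteration does not close. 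The paper avoids this by keeping the genuinely lower-order norm $\opnorm{\bm{y}}_{m-1}$ (not $\opnorm{\bm{y}}_{m,m-1}$): after one pass one has $\opnorm{\bm{y}}_m\lesssim \opnorm{\bm{y}}_{m,1}+\text{data}+\opnorm{\bm{y}}_{m-1}$, and then the paper iterates on the order (this is the meaning of ``inductively on $j$'' there), using the $j$-dependent bound $\opnorm{(\nu_\mathrm{l}\bm{x}')'}_j\lesssim \opnorm{h}_{j-1}^\dag+\|s^{1/2}\dt^j h\|_{L^1}+\opnorm{\bm{y}}_{j+1}$ together with the $j=0$ base case $\opnorm{(\nu_\mathrm{l}\bm{x}')'}_0\lesssim \|s^{1/2}h\|_{L^1}+\opnorm{\bm{y}}_{2,1}$. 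Since $\opnorm{\bm{y}}_{k,1}\leq\opnorm{\bm{y}}_{m,1}$ for $k\leq m$, this descent terminates. A minor related point: for $m=2$ you need not invoke Remark \ref{re:EstDtNu} (whose hypothesis $\opnorm{\bm{x}}_{4,1}\leq M$ is only available with $M=M_1$ under Assumption \ref{ass:xtau}(ii)); the $j=0$ case of Lemma \ref{lem:EstDtNu1} already gives $\|\nu_\mathrm{l}'\|_{L^2}\lesssim \|s^{1/2}h\|_{L^1}+\opnorm{\bm{y}}_{2,1}$ with an $M_0$-dependent constant, which is exactly what the statement requires.
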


\begin{proof}
As before, we decompose the solution $\nu$ as a sum of a principal part $\nu_\mathrm{p}$ and a lower order part $\nu_\mathrm{l}$, 
where $\nu_\mathrm{p}$ is defined by \eqref{pnu} so that $\nu_\mathrm{l}$ is a unique solution to the two-point boundary value problem \eqref{lnu}. 
Then, we see that $\bm{y}$ satisfies \eqref{rLEq} with $A(s,t)=\tau(s,t)\mathrm{Id}$ and $Q(s,t)=-(\phi\bm{x}')'(s,t)\otimes(\bm{g}+2\tau\bm{x}'')(1,t)$. 
By Lemmas \ref{lem:CalIneqLp1}, \ref{lem:EstAu2}, and \ref{lem:EstPhi}, we can check easily that these matrices $A(s,t)$ and $Q(s,t)$ satisfy the conditions in 
Assumptions \ref{ass:BEE} and \ref{ass:HOEE} with the constant $M_0$ replaced by a constant $C_0=C(m,M_0)$. 
Therefore, we can apply Lemma \ref{lem:EstIV} to obtain $\opnorm{\bm{y}}_m \lesssim \opnorm{\bm{y}}_{m,1} + \opnorm{\bm{F}}_{m-2}$, 
where $\bm{F}=\bm{f}+(\nu_\mathrm{l}\bm{x}')'$. 
Here, by Lemmas \ref{lem:EstAu}, \ref{lem:EstAu2}, \ref{lem:EstDtNu1}, and \ref{lem:EstDtNu2} we see that 
\begin{align*}
\opnorm{(\nu_\mathrm{l}\bm{x}')'}_j
&\lesssim \opnorm{\nu_\mathrm{l}'}_j \\
&\lesssim
\begin{cases}
 \|s^\frac12 h\|_{L^1} + \opnorm{\bm{y}}_{2,1} &\mbox{for}\quad j=0, \\
 \opnorm{h}_{j-1}^\dag + \|s^\frac12\dt^j h\|_{L^1} + \opnorm{\bm{y}}_{j+1} &\mbox{for}\quad j=1,2,\ldots,m-2.
\end{cases}
\end{align*}
Using these estimates inductively on $j$, we obtain the desired estimate. 
\end{proof}

Under the same assumptions in Lemma \ref{lem:EstIV2}, we see that if the initial data satisfy $\bm{y}_j^\mathrm{in} \in X^{m-j}$ for $j=0,1$, 
then the initial values $\{\bm{y}_j^\mathrm{in}\}$ satisfy $\bm{y}_j^\mathrm{in} \in X^{m-j}$ for $j=0,1,\ldots,m$, so that 
their boundary values $\bm{y}_j^\mathrm{in}(1)$ are defined for $j=0,1,\ldots,m-1$.

\begin{definition}\label{def:CC2}
Let $m\geq1$ be an integer. 
We say that the data $(\bm{y}_0^\mathrm{in},\bm{y}_1^\mathrm{in},\bm{f},h)$ for the initial boundary value problem \eqref{LEq} and \eqref{LBVP} satisfy 
the compatibility conditions up to order $m-1$ if \eqref{CC2} holds for any $j=0,1,\ldots,m-1$. 
\end{definition}

\section{Existence of solutions II}\label{sect:Proof2}
In this last section we prove Theorem \ref{Th2}. 
In the following calculations, we simply denote by $C_0$ the constant depending only on $M_0$ and by $C_1$ the constant depending also on $M_1$. 
These constants may change from line to line. 
We assume that the data $(\bm{y}_0^\mathrm{in},\bm{y}_1^\mathrm{in},\bm{f},h)$ satisfy the conditions in Theorem \ref{Th2}, define initial values 
$\{\bm{y}_j^\mathrm{in}\}_{j=0}^m$ and $\{\nu_j^\mathrm{in}\}_{j=0}^{m-2}$ by \eqref{RRIV1} and \eqref{RRIV2}, and put 
\[
\mathscr{S}_T^m = \{ \bm{y} \in \mathscr{X}_T^m \,|\, (\dt^j\bm{y})|_{t=0}=\bm{y}_j^\mathrm{in} \mbox{ for }j=0,1,\ldots,m \}.
\]
By Lemma \ref{lem:EstIV2}, we have $\bm{y}_j^\mathrm{in} \in X^{m-j}$ for $j=0,1,\ldots,m$ so that it is standard to show $\mathscr{S}_T^m\ne\emptyset$. 
We take $\bm{y}^{(1)} \in \mathscr{S}_T^m$ arbitrarily and fix it. 
Given $\bm{y}^{(n)} \in  \mathscr{S}_T^m$, let $\nu_\mathrm{l}^{(n)}$ be a unique solution to the two-point boundary value problem 
\[
\begin{cases}
 -\nu_\mathrm{l}^{(n)\prime\prime}+|\bm{x}''|^2\nu_\mathrm{l}^{(n)}
  = 2\dot{\bm{x}}'\cdot\dot{\bm{y}}^{(n)\prime} - 2(\bm{x}''\cdot\bm{y}^{(n)\prime\prime})\tau + h &\mbox{in}\quad (0,1)\times(0,T), \\
 \nu_\mathrm{l}^{(n)} = 0 &\mbox{on}\quad \{s=0\}\times(0,T), \\
 \nu_\mathrm{l}^{(n)\prime} = -2\dot{\bm{x}}'\cdot\dot{\bm{y}}^{(n)} + 2(\bm{x}''\cdot\bm{y}^{(n)\prime})\tau &\mbox{on}\quad \{s=1\}\times(0,T).
\end{cases}
\]
By Lemmas \ref{lem:EstDtNu1}, \ref{lem:EstDtNu2}, \ref{lem:EstAu}, and \ref{lem:EstAu2}, we have 
$(\nu_\mathrm{l}^{(n)}\bm{x}')' \in \mathscr{X}_T^{m-2}$ and 
\begin{align*}
& \opnorm{(\nu_\mathrm{l}^{(n)}\bm{x}')'}_{m-2} \leq
 \begin{cases}
  C_0( \|s^\frac12 h\|_{L^1} + \opnorm{\bm{y}^{(n)}}_{2,1} ) &\mbox{for}\quad m=2, \\
  C_0( \opnorm{h}_{m-3}^\dag + \|s^\frac12\dt^{m-2}h\|_{L^1} + \opnorm{\bm{y}^{(n)}}_{m-1} ) &\mbox{for}\quad m\geq3, 
 \end{cases} \\
& \|\dt^{m-1}(\nu_\mathrm{l}^{(n)}\bm{x}')'\|_{L^2} \leq C_1\Biggl( \opnorm{h}_{m-3}^\dag + \sum_{j=0}^{m-1} \|s^\frac12\dt^j h\|_{L^1} + \opnorm{\bm{y}^{(n)}}_m \Biggr).
\end{align*}
Then, we consider the initial boundary value problem 
\begin{equation}\label{nthLS}
\begin{cases}
 \ddot{\bm{y}}=(A\bm{y}')'+Q\bm{y}'(1,t)+\bm{f}^{(n)} &\mbox{in}\quad (0,1)\times(0,T), \\
 \bm{y}=\bm{0} &\mbox{on}\quad \{s=1\}\times(0,T), \\
 (\bm{y},\dot{\bm{y}})|_{t=0}=(\bm{y}_0^\mathrm{in},\bm{y}_1^\mathrm{in}) &\mbox{in}\quad (0,1),
\end{cases}
\end{equation}
where $\bm{f}^{(n)}=\bm{f}+(\nu_\mathrm{l}^{(n)}\bm{x}')'$, $A(s,t)=\tau(s,t)\mathrm{Id}$, and $Q(s,t)=-(\phi\bm{x}')'(s,t)\otimes(\bm{g}+2\tau\bm{x}'')(1,t)$, 
and these matrices satisfy the conditions in Assumptions \ref{ass:BEE} and \ref{ass:HOEE} with the constants $M_0$ and $M_1$ replaced by $C_0$ and $C_1$, 
respectively. 
Here, we have $\bm{f}^{(n)} \in \mathscr{X}_T^{m-2}$ and $\dt^{m-1}\bm{f}^{(n)} \in L^1(0,T;L^2)$. 
Moreover, it is straightforward to check that the data $(\bm{y}_0^\mathrm{in},\bm{y}_1^\mathrm{in},\bm{f}^{(n)})$ 
satisfy the compatibility conditions up to order $m-1$. 
Therefore, by Theorem \ref{Th1} there exists a unique solution $\bm{y}\in\mathscr{X}_T^m$ to \eqref{nthLS}. 
We see also that $\bm{y}\in\mathscr{S}_T^m$. 
Now, we define $\bm{y}^{(n+1)}$ as this solution $\bm{y}$. 
In this way, we have constructed a sequence of approximate solutions $\{\bm{y}^{(n)}\}_{n=1}^\infty$.

We proceed to show that $\{\bm{y}^{(n)}\}_{n=1}^\infty$ converges in $\mathscr{X}_T^m$. 
Put $\bm{u}^{(n)}=\bm{y}^{(n+1)}-\bm{y}^{(n)}$ and $\mu^{(n)}=\nu_\mathrm{l}^{(n+1)}-\nu_\mathrm{l}^{(n)}$. 
Then, we see that $\bm{u}^{(n+1)}$ solves 
\[
\begin{cases}
 \ddot{\bm{u}}^{(n+1)}=(A\bm{u}^{(n+1)\prime})'+Q\bm{u}^{(n+1)\prime}(1,t)+(\mu^{(n)}\bm{x}')' &\mbox{in}\quad (0,1)\times(0,T), \\
 \bm{u}^{(n+1)}=\bm{0} &\mbox{on}\quad \{s=1\}\times(0,T), \\
 (\bm{u}^{(n+1)},\dot{\bm{u}}^{(n+1)})|_{t=0}=(\bm{0},\bm{0}) &\mbox{in}\quad (0,1),
\end{cases}
\]
and $\mu^{(n)}$ solves 
\[
\begin{cases}
 -\mu^{(n)\prime\prime}+|\bm{x}''|^2\mu^{(n)}
  = 2\dot{\bm{x}}'\cdot\dot{\bm{u}}^{(n)\prime} - 2(\bm{x}''\cdot\bm{u}^{(n)\prime\prime})\tau &\mbox{in}\quad (0,1)\times(0,T), \\
 \mu^{(n)} = 0 &\mbox{on}\quad \{s=0\}\times(0,T), \\
 \mu^{(n)\prime} = -2\dot{\bm{x}}'\cdot\dot{\bm{u}}^{(n)} + 2(\bm{x}''\cdot\bm{u}^{(n)\prime})\tau &\mbox{on}\quad \{s=1\}\times(0,T).
\end{cases}
\]
We note that $(\dt^j\mu^{(n)})|_{t=0}=0$ for $j=0,1,\ldots,m-2$. 
Therefore, by Propositions \ref{prop:BEE} and \ref{prop:HOEE} we have 
\begin{align*}
&I_{\gamma,T}( \opnorm{\bm{u}^{(n+1)}(\cdot)}_m ) \\
&\leq
\begin{cases}
 C_0S_{\gamma,T}^*( \|\dt(\mu^{(n)}\bm{x}')'(\cdot)\|_{L^2} ) &\mbox{for}\quad m=2, \\
 C_0\{ I_{\gamma,T}( \opnorm{ (\mu^{(n)}\bm{x}')'(\cdot) }_{m-2} ) + S_{\gamma,T}^*( \|\dt^{m-1}(\mu^{(n)}\bm{x}')'(\cdot)\|_{L^2} ) \}
  &\mbox{for}\quad m\geq3.
\end{cases}
\end{align*}
Moreover, by Lemmas \ref{lem:EstDtNu1}, \ref{lem:EstDtNu2}, \ref{lem:EstAu}, and \ref{lem:EstAu2}, we have 
\[
\begin{cases}
 \opnorm{ (\mu^{(n)}\bm{x}')'(\cdot) }_{m-2} \leq C_0\opnorm{\bm{u}^{(n)}(\cdot)}_{m-1} &\mbox{for}\quad m\geq3, \\
 \|\dt^{m-1}(\mu^{(n)}\bm{x}')'(\cdot)\|_{L^2} \leq C_1\opnorm{\bm{u}^{(n)}(\cdot)}_{m} &\mbox{for}\quad m\geq2.
\end{cases}
\]
These estimates together with \eqref{S*} imply 
\begin{align*}
I_{\gamma,T}( \opnorm{\bm{u}^{(n+1)}(\cdot)}_m )
&\leq C_1S_{\gamma,T}^*( \opnorm{\bm{u}^{(n)}(\cdot)}_m ) \\
&\leq C_1\gamma^{-1} I_{\gamma,T}( \opnorm{\bm{u}^{(n)}(\cdot)}_m ).
\end{align*}
Therefore, if we choose $\gamma$ so large that $2C_1\leq\gamma$, then we see that $\{\bm{y}^{(n)}\}_{n=1}^\infty$ converges in $\mathscr{X}_T^m$. 
Let $\bm{y}\in\mathscr{X}_T^m$ be the limit. 
We see also that $\{\nu_\mathrm{l}^{(n)}\}_{n=1}^\infty$ converges to a $\nu_\mathrm{l}$ such that $\nu_\mathrm{l}'\in\mathscr{X}_T^{m-2}$. 
Putting $\nu=\nu_\mathrm{p}+\nu_\mathrm{l}$ with $\nu_\mathrm{p}=-((\bm{g}+2\tau\bm{x}'')\cdot\bm{y}^{\prime})|_{s=1}\phi$, we see that 
$(\bm{y},\nu)$ is the desired solution. 
Moreover, the energy estimate \eqref{EstLP} in Theorem \ref{Th2} can be obtained similarly as above.

It remains to show \eqref{EstNu} so that we assume also Assumption \ref{ass:addxtau2}. 
Similar to the proof of Lemma \ref{lem:EstDtNu2}, by Lemmas \ref{lem:CalIneq1}--\ref{lem:CalIneqY2} we see that 
\begin{align*}
\opnorm{\nu_\mathrm{l}'}_{m-1,*}
&\leq \opnorm{\nu_\mathrm{l}'}_{m-2} + \opnorm{\nu_\mathrm{l}''}_{m-2}^\dag \\
&\lesssim \opnorm{\nu_\mathrm{l}'}_{m-2} + \opnorm{\nu_\mathrm{l}\bm{x}''\cdot\bm{x}''}_{m-2}^\dag + \opnorm{\tau\bm{x}''\cdot\bm{y}''}_{m-2}^\dag 
 + \opnorm{\dot{\bm{x}}'\cdot\dot{\bm{y}}'}_{m-2}^\dag + \opnorm{h}_{m-2}^\dag \\
&\lesssim \opnorm{\nu_\mathrm{l}'}_{m-2} + \opnorm{\bm{y}}_m + \opnorm{h}_{m-2}^\dag.
\end{align*}
Moreover, by \eqref{pnu} and Lemma \ref{lem:EstPhi} we see also that 
\begin{align*}
\opnorm{\nu_\mathrm{p}'}_{m-1,*}
&\lesssim (1+\|(\tau',\ldots,\dt^{m-2}\tau')\|_{L^2} \|(\bm{x},\ldots,\dt^{m-2}\bm{x})\|_{X^3}) \opnorm{\bm{y}}_m \opnorm{\phi'}_{m-1,*} \\
&\lesssim \opnorm{\bm{y}}_m.
\end{align*}
These estimates imply \eqref{EstNu}. 
The continuity in $t$, that is, $\nu'\in\mathscr{X}_T^{m-1,*}$ can be proved by evaluating $\opnorm{\nu'(t_1)-\nu'(t_2)}_{m-1,*}$ in the same way as above. 
The proof of Theorem \ref{Th2} is complete. 
\hfill$\Box$


\bigskip
Tatsuo Iguchi \par
{\sc Department of Mathematics} \par
{\sc Faculty of Science and Technology, Keio University} \par
{\sc 3-14-1 Hiyoshi, Kohoku-ku, Yokohama, 223-8522, Japan} \par
E-mail: \texttt{iguchi@math.keio.ac.jp}

\bigskip
Masahiro Takayama \par
{\sc Department of Mathematics} \par
{\sc Faculty of Science and Technology, Keio University} \par
{\sc 3-14-1 Hiyoshi, Kohoku-ku, Yokohama, 223-8522, Japan} \par
E-mail: \texttt{masahiro@math.keio.ac.jp}

\end{document}